\providecommand{\U}[1]{\protect\rule{.1in}{.1in}}
\providecommand{\U}[1]{\protect\rule{.1in}{.1in}}
\newtheorem{theorem}{Theorem}
\newtheorem{corollary}[theorem]{Corollary}
\newtheorem{definition}[theorem]{Definition}
\newtheorem{lemma}[theorem]{Lemma}
\newtheorem{notation}[theorem]{Notation}
\newtheorem{proposition}[theorem]{Proposition}
\newtheorem{remark}[theorem]{Remark}
\newtheorem{blanket}[theorem]{Blanket Assumption}
\newenvironment{proof}[1][Proof]{\noindent\textbf{#1.} }{\ \rule{0.5em}{0.5em}}
\begin{document}

\title{Groupoid and Inverse Semigroup Presentations\\of\\Ultragraph $C^{*}$-Algebras}
\author{Alberto E. Marrero\thanks{Supported by grants from the National Science
Foundation and the Sloan Foundation and by a GAANN Fellowship}\ and Paul S.
Muhly\thanks{Supported by a grant from the National Science Foundation
(DMS-0355443).}\medskip{}\\Department of Mathematics-Physics\\University of Puerto Rico\\Cayey, PR 00736 \medskip{} \\Department of Mathematics\\The University of Iowa\\Iowa City, IA 52242\medskip{} \\alberto\_marrero@yahoo.com\\pmuhly@math.uiowa.edu\\}
\maketitle
\begin{abstract}
Inspired by the work of Paterson on $C^{\ast}$-algebras of directed graphs, we
show how to associate a groupoid $\mathfrak{G}_{\mathcal{G}}$ to an ultragraph
$\mathcal{G}$ in such a way that the $C^*$-algebra of $\mathfrak{G}_{\mathcal{G}}$ is canonically isomorphic to Tomforde's $C^*$-algebra $C^*(\mathcal{G})$.  The groupoid $\mathfrak{G}_{\mathcal{G}}$ is built from an inverse semigroup $S_{\mathcal{G}}$ naturally associated to $\mathcal{G}$. 
\end{abstract}

\section{{\protect\small INTRODUCTION}}

Cuntz and Krieger described a way to associate a $C^{\ast}$-algebra
$\mathcal{O}_{A}$ to a finite square matrix $A$ with entries in $\left\{
0,1\right\}  $ in \cite{CK80}. Subsequently, their work was set in the context
of graphs by a number of authors (see, e.g. \cite{yW80}). It was soon
recognized that infinite graphs led to problems that were not covered in
\cite{CK80} and numerous attempts over the years have been advanced for
dealing with them. One very interesting attempt was introduced by Mark
Tomforde in \cite{MTT}, where he defined the notion of an \emph{ultragraph}.
Roughly speaking, an ultragraph is a generalization of directed graph in which
the range of an edge is allowed to be a \emph{set} of vertices rather than
just a single vertex. In \cite{MTT} and \cite{MTPS}, Tomforde showed that the
class of ultragraph $C^{\ast}$-algebras includes all graph $C^{\ast}$-algebras
and all so-called Exel-Laca algebras, as well as $C^{\ast}$-algebras that are
in neither of these classes. Our goal in this note is to determine a groupoid
model $\mathfrak{G}_{\mathcal{G}}$ for an ultragraph $\mathcal{G}$ in such a
way that $C^{\ast}\left(  \mathcal{G}\right)  \backsimeq C^{\ast}\left(
\mathfrak{G}_{\mathcal{G}}\right)  $, revealing salient features of $C^{\ast
}(\mathcal{G})$. The groupoid connection enables one to interpret properties
of $C^{\ast}\left(  \mathcal{G}\right)  $ in dynamical terms. In the directed
graph setting, this has been done with considerable consequence in~\cite{kprr}
and \cite{APP}.

Our approach is inspired by Paterson's paper \cite{APP}. We first build an
inverse semigroup $S_{\mathcal{G}}$ that is designed to reflect the
representation theory of $\mathcal{G}$. A representation of $\mathcal{G}$ is
determined by certain partial isometries on a Hilbert space indexed by
vertices and edges from $\mathcal{G}$. The $C^{\ast}$-algebra $C^{\ast}\left(
\mathcal{G}\right)  $ of $\mathcal{G}$ is the universal $C^{\ast}$-algebra for
such representations \cite[Theorem 2.11]{MTT}. A representation of
$\mathcal{G}$ may be viewed directly as a representation of $S_{\mathcal{G}}$
by partial isometries. The groupoid model $\mathfrak{G}_{\mathcal{G}}$ we
construct is built from $S_{\mathcal{G}}$ based on the approach developed by
Paterson and exposed in his book \cite{APB}. We first build the universal
groupoid canonically associated to $S_{\mathcal{G}}$ and then take a certain
reduction for $\mathfrak{G}_{\mathcal{G}}$. We will call $\mathfrak
{G}_{\mathcal{G}}$ the \emph{ultrapath groupoid} $\emph{of}$ $\mathcal{G}$.

For a bit more detail to help with the motivation, recall that if $E$ is an
ordinary directed graph (but not necessarily row finite or without sinks),
Paterson's inverse semigroup $S_{E}$, is the set of all pairs $\left(
\alpha,\beta\right)  $, where $\alpha,\beta$ are finite paths in the graph $E$
and $r\left(  \alpha\right)  =r\left(  \beta\right)  $, together with a zero
element $z$ \cite{APP}. The multiplication in $S_{E}$ is defined as follows:
$(\alpha,\alpha^{\prime}\mu)(\alpha^{\prime},\beta):=(\alpha,\beta\mu)$,
$(\alpha,\alpha^{\prime})(\alpha^{\prime}\mu,\beta^{\prime}):=(\alpha\mu
,\beta^{\prime})$ and all other products are the zero $z$. The involution on
$S_{E}$ is transposition: $(\alpha,\beta)^{\ast}:=(\beta,\alpha)$. In an
ordinary graph the paths of length zero are just the vertices. However in the
ultragraph case, the paths of length zero are the \emph{sets} in a space that we denote by
$\mathcal{G}^{0}$, which is defined to be the smallest subcollection of
subsets of $G^{0}$, that contains $\left\{  v\right\}  $, for all $v\in G^{0}%
$, contains $r\left(  e\right)  $ for all $e\in\mathcal{G}^{1}$, and is closed
under finite union and intersections. That is, $\mathcal{G}^{0}$ is the
{}``lattice\textquotedblright\ generated by $\{\{ v\}\mid v\in G^{0}\}$ and
the sets $r(e)$, $e\in\mathcal{G}^{1}$. We place {}``lattice\textquotedblright
\ in double quotes because $\mathcal{G}^{0}$ may fail to be a lattice in the
usual sense in that it may fail to contain all of $G^{0}$. Roughly speaking,
we think of enlarging $G^{0}$ by adding in additional vertices, one for each
\emph{set} $r(e)$, $e\in\mathcal{G}^{1}$, so that the elements of $\{ v:v\in
G^{0}\}\cup\{ r\left(  e\right)  :e\in\mathcal{G}^{1}\}$ play the role of
\ {}``generalized vertices\textquotedblright.\ Then the {}%
``lattice\textquotedblright\ $\mathcal{G}^{0}$ plays the role of \ {}``subsets
of generalized vertices\textquotedblright. Thus, since $C^{\ast}\left(
\mathcal{G}\right)  $ involves partial isometries indexed by those special
sets, we introduce the set $\mathfrak{p}$ consisting of $\mathcal{G}^{0}$
together with the set of all pairs $\left(  \alpha,A\right)  $, where $\alpha$
is a finite path in $\mathcal{G}$ with positive length, and $A\in
\mathcal{G}^{0}$, with $A\subseteq r\left(  \alpha\right)  $. That is,
$\alpha=\alpha_{1}\alpha_{2}\cdots\alpha_{l}$ with $s(\alpha_{i+1})\in
r(\alpha_{i})$. We will call $\mathfrak{p}$ the \textit{ultrapath space} of the
ultragraph\textit{\ }$\mathcal{G}$ . The range map $r$ and the source map $s$
extend to $\mathfrak{p}$ in a natural way. Our inverse semigroup
$S_{\mathcal{G}}$ is the set of pairs $(x,y)\in\mathfrak{p}\times\mathfrak{p}$
such that $r(x)=r(y)$. The operations on $S_{\mathcal{G}}$ are defined
similarly to those on $S_{E}$. However, the structure of $S_{\mathcal{G}}$ is
rather more complicated and much of our analysis is devoted to keeping track
of the complications.

The next section is devoted to the constructions of $S_{\mathcal{G}}$ and
$\mathfrak{G}_{\mathcal{G}}$ and to showing that $C^{\ast}(\mathcal{G})\simeq
C^{\ast}(\mathfrak{G}_{\mathcal{G}})$. In the subsequent section we address
the amenability of $C^{\ast}\left(  \mathcal{G}\right)  $. We use a groupoid
crossed product argument to show that $C^{\ast}(\mathfrak{G}_{\mathcal{G}})$
is nuclear and hence that $C^{\ast}(\mathfrak{G}_{\mathcal{G}})$ and $C^{\ast
}\left(  \mathcal{G}\right)  $ are amenable. The last section deals with the
\emph{simplicity} of $C^{\ast}\left(  \mathcal{G}\right)  $. We define an
analogue of the so-called {}``condition (K)\textquotedblright\ that appears in
the analysis of ordinary graph $C^{\ast}$-algebras. We show that $\mathfrak
{G}_{\mathcal{G}}$ is essentially principal if and only if $\mathcal{G}$
satisfies condition (K). When $\mathcal{G}$ satisfies condition (K), then thanks to the amenability of $\mathfrak{G}_{\mathcal{G}}$, the (norm closed, two sided) ideals in $C^{\ast}(\mathcal{G})$ and
$C^{\ast}(\mathfrak{G}_{\mathcal{G}})$ are parameterized by the open invariant subsets of the unit space of $\mathfrak{G}_{\mathcal{G}}$.  In particular, $C^{\ast}(\mathcal{G})$ and
$C^{\ast}(\mathfrak{G}_{\mathcal{G}})$ are simple if and only if $\mathfrak
{G}_{\mathcal{G}}$ is minimal.

\subsection{{\protect\small Notation and Conventions}%
{\protect\normalsize \label{Notation}}}

We set up here the basic notation we shall use for graphs, ultragraphs and
inverse semigroups. Additional notation will be developed as needed, below.

A directed graph\textbf{\ }$E=\left(E^{0},E^{1},r,s\right)$ consists of a
countable set of vertices $E^{0}$, a countable set of edges $E^{1}$, and maps
$r$, $s:E^{1}\rightarrow E^{0}$ identifying the range and source of each edge.
The graph $E$ is called \emph{row finite}\textit{\ }if for each $v\in E^{0}$,
the set of edges starting at $v$ is finite. The graph is called \emph{locally
finite} if for each vertex $v\in E^{0}$, the set of edges starting at $v$ is
finite and the set of edges terminating at $v$ is also finite. A vertex $v$ is
called a \emph{sink}, if there no edges starting at $v$. A\emph{\ finite
path}\textit{\ }is a sequence $\alpha$ of edges $e_{1}\ldots e_{k}$ where
$s\left(  e_{i+1}\right)  =r\left(  e_{i}\right)  $ for $1\leq i\leq k-1$. We
write $\alpha=e_{1}\ldots e_{k}$. The length $l\left(  \alpha\right)  $ of
$\alpha$ is just $k$. Each vertex $v$ is regarded as a finite path of length
zero. The source and range maps extend to the set of finite paths in the
natural way. The set of infinite paths, is the set of infinite sequences of
edges $\gamma=e_{1}e_{2}\ldots$, such that $s\left(  e_{i+1}\right)  =r\left(
e_{i}\right)  $ for all $i$. The source map extends to that set in the natural
way as well.

Following \cite{MTT}, an \emph{ultragraph} is a system $\mathcal{G}\mathbb{=}\left(  G^{0}%
,\mathcal{G}^{1},r,s\right)  $, where $G^{0}$ and $\mathcal{G}^{1}$ are
countable sets, called, respectively, the \emph{vertices} and \emph{edges} of
$\mathcal{G}$; where $s$ is a function from $\mathcal{G}^{1}$ to $G^{0}$,
called the \emph{source} function; and where $r$ is a function from
$\mathcal{G}^{1}$ to the power set of $G^{0}$,
%
%
%
%
$\mathcal{P}\left(  G^{0}\right)  $, such that $r(e)$ is non-empty for each
$e\in\mathcal{G}^{1}$. We write $\mathcal{G}^{0}$ for the smallest
subcollection of $\mathcal{P}\left(  G^{0}\right)  $ that contains $\left\{
v\right\}  $, for each $v\in G^{0}$ and contains $r\left(  e\right)  $ for all
$e\in\mathcal{G}^{1}$, and is closed under finite union and intersections. A
\textit{finite path} in $\mathcal{G}$ is either an element of $\mathcal{G}%
^{0}$ or a sequence of edges $e_{1}\ldots e_{k}$ in $\mathcal{G}^{1}$ where
$s\left(  e_{i+1}\right)  \in r\left(  e_{i}\right)  $ for $1\leq i\leq k$. If
we write $\alpha=e_{1}\ldots e_{k}$, the length $\left|  \alpha\right|  $ of
$\alpha$ is just $k$. The length $|A|$ of a path $A\in\mathcal{G}^{0}$ is
zero. We define $r\left(  \alpha\right)  =r\left(  e_{k}\right)  $ and
$s\left(  \alpha\right)  =s\left(  e_{1}\right)  $. For $A\in\mathcal{G}^{0}$,
we set $r\left(  A\right)  =A=s\left(  A\right)  $. The set of
finite paths in $\mathcal{G}$ is denoted by $\mathcal{G}^{\ast}$. The set of
infinite paths $\gamma=e_{1}e_{2}\ldots$ in $\mathcal{G}$ is denoted by
$\mathfrak
{p}^{\infty}$. The length $\left|  \gamma\right|  $ of $\gamma\in\mathfrak
{p}^{\infty}$ is defined to be $\infty$. A vertex $v$ in $\mathcal{G}$ is
called a \emph{sink} if $\left|  s^{-1}\left(  v\right)  \right|  =0$ and is
called an \emph{infinite emitter} if $\left|  s^{-1}\left(  v\right)  \right|
=\infty$. We say that a vertex $v$ is a \emph{singular vertex} if it is either
a sink or an infinite emitter. Finally given vertices $v,w\in G^{0}$, we write
$w\geq v$ to mean that there exists a path $\alpha\in\mathcal{G}^{\ast}$ with
$s\left(  \alpha\right)  =w$ and $v\in r\left(  \alpha\right)  $. Also we
write $G^{0}\geq\left\{  v\right\}  $ to mean that $w\geq v$, for all $w\in
G^{0}$. See \cite[p.8]{MTPS}.

\begin{blanket}
\label{blanket assumption}\emph{Throughout the paper we will assume that there
are no sinks in $\mathcal{G}$, unless otherwise specified.}
\end{blanket}

The reason for this is that we want to investigate $C^{\ast}\left(
\mathcal{G}\right)$ using an ultragraph groupoid $\mathfrak{G}_{\mathcal{G}%
}$, whose unit space $\mathfrak{G}_{\mathcal{G}}^{\left(  0\right)  }$
consists of paths that cannot end at a sink. So if we want to examine
$C^{\ast}\left(  \mathcal{G}\right)  $ from a groupoid perspective, then sinks must be excluded.

%
%
%
%
A semigroup $S$ is called an \emph{inverse semigroup} if for each $s\in S$,
there exists a unique element $t\in S$ such that $sts=s$ and $tst=t$. We write
the element $t$ as $s^{\ast}$. Note that $s^{\ast\ast}=s$. Every element
$ss^{\ast}$ belongs to the set $E\left(  S\right)  $ of \emph{idempotents} of
$S$. The set $E\left(  S\right)  $ is a commutative subsemigroup of $S$ and so
is a semilattice. There is a natural order on $E\left(  S\right)  $ given by
declaring $e\leq f$ if and only if $ef=e$, $e$ and $f\in E\left(  S\right)  $,
see \cite[Proposition 2.1.1, p.22]{APB}.

\section{{\protect\small THE INVERSE SEMIGROUP $S_{\mathcal{G}}$ OF AN
ULTRAGRAPH $\mathcal{G}$ AND THE UNIVERSAL GROUPOID FOR $S_{\mathcal{G}}$}}

In this section we have two main objectives. The first is to obtain an inverse
semigroup model $S_{\mathcal{G}}$ for an ultragraph $\mathcal{G}$. The second
is to identify the universal groupoid $H_{\mathcal{G}}$ for $S_{\mathcal{G}}$.
The definition of $S_{\mathcal{G}}$ stems from the representation theory of
$\mathcal{G}$. Recall the following definition due to Tomforde \cite{MTT}.

\begin{definition}
\label{cuntz-pimsner rep.} A \textit{representation} of $\mathcal{G}$ on a
Hilbert space $\mathcal{H}$ is given by a family $\left\{  p_{A}%
:A\in\mathcal{G}^{0}\right\}  $ of projections, and a family $\left\{
s_{e}:e\in\mathcal{G}^{1}\right\}  $ of partial isometries with mutually
orthogonal ranges such that:

\begin{enumerate}
\item [(i)]$p_{\emptyset}=0,$ $p_{A}p_{B}=p_{A\cap B}$,\ and \ $p_{A\cup
B}=p_{A}+p_{B}-p_{A\cap B}$, for all \ $A$, $B\in\mathcal{G}^{0}$;

\item[(ii)] $s_{e}^{\ast}s_{e}=p_{r\left(  e\right)  }$, for all
$e\in\mathcal{G}^{1}$;

\item[(iii)] $s_{e}s_{e}^{\ast}\leq p_{s\left(  e\right)  }$, for all
$e\in\mathcal{G}^{1}$;

\item[(iv)] $p_{v}=\sum_{s\left(  e\right)  =v}s_{e}s_{e}^{\ast}$, whenever
$0<\left|  s^{-1}\left(  v\right)  \right|  <\infty$.
\end{enumerate}
\end{definition}

The family $\left\{  s_{e},p_{A}:e\in\mathcal{G}^{1},A\in\mathcal{G}%
^{0}\right\}  $ is also called a \emph{Cuntz-Krieger\ }$\mathcal{G}$-family. For
a path $\alpha:=e_{1}\ldots e_{n}\in\mathcal{G}^{\ast}$ we define $s_{\alpha}$
to be $s_{e_{1}}\cdots s_{e_{n}}$ if $\left|  \alpha\right|  \geq1$ and
$p_{A}$ if $\alpha=A\in\mathcal{G}^{0}$.

Every inverse semigroup can be realized as $^{\ast}$-semigroup of partial
isometries on a Hilbert space. See \cite[Proposition 2.1.4]{APB}. If
$\mathcal{G}\mathbb{=}\left(  G^{0},\mathcal{G}^{1},r,s\right)  $ is an
ultragraph, and if $\left\{  s_{e},p_{A}:e\in\mathcal{G}^{1},A\in
\mathcal{G}^{0}\right\}  $ is a universal Cuntz-Krieger $\mathcal{G}$-family
realized on a Hilbert space $\mathcal{H}$, we know that the $C^{\ast}$-algebra
$C^{\ast}\left(  \mathcal{G}\right)  $ may be identified with the closed span,
$\overline{span}\{ s_{\alpha}p_{A}s_{\beta}^{\ast}:\alpha,\beta\in
\mathcal{G}^{\ast},A\in\mathcal{G}^{0}\}$, see \cite[p.7]{MTT}. Note that for
each $\alpha$, $\beta\in\mathcal{G}^{\ast}$, and $A\in\mathcal{G}^{0}$ with
$A\subseteq r\left(  \alpha\right)  \cap r\left(  \beta\right)  $, the
operator on $\mathcal{H}$, $T_{(\left(  \alpha,A),(\beta,A\right)
)}:=s_{\alpha}p_{A}s_{\beta}^{\ast}$ \ is a partial isometry, such that
$T_{(\left(  \alpha,A),(\beta,A\right)  )}^{\ast}=T_{(\left(  \beta
,A),(\alpha,A\right)  )}$. So we obtain a $^{\ast}$-semigroup of partial
isometries on $\mathcal{H}$, and therefore an inverse semigroup, which we
shall denote by $S_{\mathcal{G}}$. Using properties of the generators for
$C^{\ast}\left(  \mathcal{G}\right)  $, $s_{\alpha}p_{A}s_{\beta}^{\ast}$,
$\alpha$, $\beta\in\mathcal{G}^{\ast}$, and $A\in\mathcal{G}^{0}$ (see
\cite[Lemma 2.8 and Lemma 2.9]{MTT}), we will describe the inverse semigroup
$S_{\mathcal{G}}$ for $\mathcal{G}$ in a fashion that is independent of any
Hilbert space representation. It turns out that our inverse semigroup is
analogous to the inverse semigroup model for an ordinary graph that Paterson
obtains in \cite{APP}.

\begin{remark}
\label{principal difficulty}While it is clear in a broad sense what one must
do to follow the path laid out by Paterson, there is an important difficulty
that must be surmounted. It may be helpful, therefore, to call attention to it
here for the purpose of motivating later detail. Condition (i) in Definition
\ref{cuntz-pimsner rep.} is the source of the difficulty. Notice that it says
that the family $\left\{  p_{A}:A\in\mathcal{G}^{0}\right\}  $ is a
{}``proto-spectral measure\textquotedblright\ defined on the {}%
``lattice\textquotedblright\ $\mathcal{G}^{0}$. As we build our inverse
semigroup and groupoid models, we will have to keep track of how to guarantee
condition (i) in what we are doing.
\end{remark}

For this purpose, we find it helpful to {}``enrich\textquotedblright\ the path
notation discussed in subsection \ref{Notation} and introduce the notion of
what we like to call {}``ultrapaths\textquotedblright. For $n\geq1,$ we define
$\mathfrak{p}^{n}:=\{\left(  \alpha,A\right)  :\alpha\in\mathcal{G}^{\ast
},\left\vert \alpha\right\vert =n,$ $A\in\mathcal{G}^{0},A\subseteq r\left(
\alpha\right)  \}$. We specify that $\left(  \alpha,A\right)  =(\beta,B)$ if
and only if $\alpha=\beta$ and $A=B$. We set $\mathfrak{p}^{0}:=\mathcal{G}%
^{0}$ and we let $\mathfrak{p}:=\coprod\limits_{n\geq0}\mathfrak{p}^{n}$. \ We
define the length of a pair $\left(  \alpha,A\right)  $, $\left\vert \left(
\alpha,A\right)  \right\vert $ to be the length of $\alpha$, $\left\vert
\alpha\right\vert $. We call $\mathfrak{p}$ the \emph{ultrapath space}
associated with $\mathcal{G}$ and the elements of $\mathfrak{p}$ are called
\emph{ultrapaths}. We may extend the range map $r$ and the source map $s$ to
$\mathfrak{p}$ by the formulas, $r\left(  \left(  \alpha,A\right)  \right)
=A$ and $s\left(  \left(  \alpha,A\right)  \right)  =s\left(  \alpha\right)
$. Each $A\in\mathcal{G}^{0}$ is regarded as an ultrapath of length zero and
we define $r\left(  A\right)  =s\left(  A\right)  =A$. It will be convenient
to embed $\mathcal{G}^{\ast}$ in $\mathfrak{p}$ by sending $\alpha$ to
$(\alpha,r(\alpha))$, if $|\alpha|\geq1$, and by sending $A$ to $A$ for all
$A\in\mathcal{G}^{0}$ (See \cite{MTT}.) In a sense, our formation of the
ultrapath space $\mathfrak{p}$ is analogous to the process of forming the
\emph{disjoint union} of a family of not-necessarily-disjoint sets, i.e.,
their co-product.

\begin{notation}
\label{UltrapathAlgebra}Generic elements of $\mathfrak{p}$ will be denoted by
lower case letters at the end of the alphabet: $x$, $y$, $z$ and $w$. However
if $x\in\mathfrak{p}$, and if $x\in\mathfrak{p}^{0}:=\mathcal{G}^{0}$, we
think of $x$ as a set $A$ in $\mathcal{G}^{0}$. Otherwise we think of $x$ as a
pair, say $\left(  \alpha,A\right)  $, with $\left|  \alpha\right|  \geq1$.
Algebraically, we treat $\mathfrak{p}$ like a small category and say that a
product $x\cdot y$ is defined only when $r\left(  x\right)  \cap s\left(
y\right)  \neq\emptyset$\footnote{The notation is a little inconsistant. By
definition, $s(y)$ is a set only when $y\in\mathfrak{p}^{0}$; otherwise,
$s(y)$ is a point in $G^{0}$. In the latter case, we really want to identify
$s(y)$ with $\{s(y)\}$. We shall do this whenever it is convenient and not add
extra notation to distinguish between $s(y)$ and $\{s(y)\}$.}. When $x\cdot y$
is defined, the product is effectively concatenation of $x$ and $y$. That is,
if $x=(\alpha,A)$ and $y=(\beta,B)$, then $x\cdot y$ is defined if and only if
$s(\beta)\in A$, and in this case, $x\cdot y:=(\alpha\beta,B)$. Also we
specify that:
\begin{equation}
x\cdot y=\left\{
\begin{array}
[c]{ll}%
x\cap y & \text{if }x,y\in\mathcal{G}^{0}\text{ and if }x\cap y\neq\emptyset\\
y & \text{if }x\in\mathcal{G}^{0}\text{, }\left|  y\right|  \geq1\text{, and
if }x\cap s\left(  y\right)  \neq\emptyset\\
x_{y} & \text{if }y\in\mathcal{G}^{0}\text{, }\left|  x\right|  \geq1\text{,
and if }r\left(  x\right)  \cap y\neq\emptyset
\end{array}
\right.  \label{specify}%
\end{equation}
where, if $x=\left(  \alpha,A\right)  $, $\left|  \alpha\right|  \geq1$ and if
$y\in\mathcal{G}^{0}$, the expression $x_{y}$ is defined to be $\left(
\alpha,A\cap y\right)  $. Observe also that the range of $x_{y}$, $r\left(
x_{y}\right)  $, becomes $r\left(  x_{y}\right)  =r\left(  x\right)  \cap y$.
Given $x,y\in\mathfrak{p}$, we say that $x$ has $y$ as an initial segment if
$x=y\cdot x^{\prime}$, for some $x^{\prime}\in\mathfrak{p}$, with $s\left(
x^{\prime}\right)  \cap r\left(  y\right)  \neq\emptyset$. We shall say that
$x$ and $y$ in $\mathfrak{p}$ are comparable if $x$ has $y$ as an initial
segment or vice versa. Furthermore, the equation, $x=x\cdot y$, holds if and
only if $y\in\mathcal{G}^{0}$ and $r\left(  x\right)  \subseteq y$.
\end{notation}

Recall from subsection \ref{Notation} that $\mathfrak{p}^{\infty}$ denotes the
set of all infinite paths. We extend the source map $s$ to $\mathfrak
{p}^{\infty}$, by defining $s(\gamma)=s\left(  e_{1}\right)  $, where
$\gamma=e_{1}e_{2}\ldots$. We may concatenate pairs in $\mathfrak{p}$, with
infinite paths in $\mathfrak{p}^{\infty}$ as follows. If $y=\left(
\alpha,A\right)  \in\mathfrak{p}$, and if $\gamma=e_{1}e_{2}\ldots\in
\mathfrak{p}^{\infty}$ are such that $s\left(  \gamma\right)  \in r\left(
y\right)  =A$, then the expression $y\cdot\gamma$ is defined to be
$\alpha\gamma=\alpha e_{1}e_{2}...\in\mathfrak{p}^{\infty}$. If $y=$
$A\in\mathcal{G}^{0}$, we define $y\cdot\gamma=A\cdot\gamma=\gamma$ whenever
$s\left(  \gamma\right)  \in A$. Of course $y\cdot\gamma$ is not defined if
$s\left(  \gamma\right)  \notin r\left(  y\right)  =A$. In this way, we get an
{}``action\textquotedblright\ of $\mathfrak{p}$ on $\mathfrak{p}^{\infty}$.

\begin{definition}
\label{inverse semigroup}Let $S_{\mathcal{G}}:=\{\left(  x,y\right)
:x,y\in\mathfrak{p},r\left(  x\right)  =r\left(  y\right)  \}\cup\left\{
\omega\right\}  $. We define an involution on $S_{\mathcal{G}}$ by
$\omega^{\ast}=\omega$ and $\left(  x,y\right)  ^{\ast}=\left(  y,x\right)  $,
and we define a product on $S_{\mathcal{G}}$ by the following requirements:
\end{definition}

\begin{enumerate}
\item $\left(  x,r\left(  x\right)  \right)  \left(  r\left(  y\right)
,y\right)  :=\left(  x\cdot r\left(  y\right)  ,y\cdot r\left(  x\right)
\right)  $, for all $x$ and $y\in\mathfrak{p}$ with $r\left(  x\right)  \cap
r\left(  y\right)  \neq\emptyset$.

\item  If $x$ has $z$ as an initial segment, so $x=z\cdot x^{\prime}$ for some
$x^{\prime}\in\mathfrak{p}$, then $\left(  w,z\right)  \left(  x,y\right)
=\left(  w,z\right)  \left(  z\cdot x^{\prime},y\right)  :=\left(  w\cdot
x^{\prime},y\right)  $.

\item  If $z$ has $x$ as an initial segment, so $z=x\cdot z^{\prime}$ for some
$z^{\prime}\in\mathfrak{p}$, then $\left(  w,z\right)  \left(  x,y\right)
=\left(  w,x\cdot z^{\prime}\right)  \left(  x,y\right)  :=\left(  w,y\cdot
z^{\prime}\right)  $.

\item  All other products are defined to be $\omega$.
\end{enumerate}

\begin{proposition}
The set $S_{\mathcal{G}}$ with this involution and product is an inverse semigroup.
\end{proposition}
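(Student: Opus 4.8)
The plan is to invoke the classical characterization recalled in \cite[Ch.~2]{APB}: a semigroup in which every element is regular and in which the idempotents commute is automatically an inverse semigroup, and in that situation the inverse of each element is unique and given by the involution. Thus it suffices to establish three things: (a) the product of Definition \ref{inverse semigroup} is well defined and associative, so that $S_{\mathcal{G}}$ is a semigroup; (b) the regularity identity $ss^{\ast}s=s$ for every $s\in S_{\mathcal{G}}$; and (c) that the idempotents of $S_{\mathcal{G}}$ commute. Granting (a)--(c), $s^{\ast}$ is forced to be the unique $t$ with $sts=s$ and $tst=t$, which is precisely what the definition of an inverse semigroup requires.

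Well-definedness amounts to two routine checks, both relying on the range formulas of Notation \ref{UltrapathAlgebra} (notably $x\cdot r(x)=x$ and $r(x_{y})=r(x)\cap y$): first, that the three product rules of Definition \ref{inverse semigroup} agree on the overlaps of their hypotheses --- for instance when the inner coordinates coincide, where each ultrapath is an initial segment of the other and both rules collapse to the adjustment $(w,z)(z,y)=(w,y)$ --- and second, that each product again has equal-ranged coordinates, hence lies in $S_{\mathcal{G}}$ (with $\omega$ absorbing every undefined case). The regularity identity then follows at once: since $r(x)=r(y)$, the defining relations give $(x,y)(y,x)=(x\cdot r(y),x)=(x,x)$ and $(x,x)(x,y)=(x,y)$, so that $(x,y)(y,x)(x,y)=(x,y)$; the element $\omega$ is handled trivially, and by symmetry $(x,y)^{\ast}=(y,x)$ is an inverse of $(x,y)$.

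Inspecting the product rules shows that $(x,y)^{2}=(x,y)$ forces $x=y$, so the idempotent set is $E(S_{\mathcal{G}})=\{(x,x):x\in\mathfrak{p}\}\cup\{\omega\}$. To see these commute I would compute $(x,x)(y,y)$ and $(y,y)(x,x)$ and verify they coincide. The pure length-zero case is governed by the first product rule, which yields $(A,A)(B,B)=(A\cap B,A\cap B)$ when $A\cap B\neq\emptyset$ and $\omega$ otherwise; this is exactly the ``proto-spectral measure'' phenomenon singled out in Remark \ref{principal difficulty}, and it is consistent only because $\mathcal{G}^{0}$ is closed under intersection. The remaining cases, with one or both ultrapaths of positive length, are either incomparable (both products equal $\omega$) or reduce to the length-zero case after stripping a common path prefix, so that commutativity follows from the commutativity and associativity of intersection on $\mathcal{G}^{0}$. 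Hence $E(S_{\mathcal{G}})$ is a commuting (indeed semilattice) subset, establishing (c).

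The genuine work, and where I expect the main obstacle, is (a): associativity. For a triple with nonzero product one must trace how the comparability relations among the inner coordinates propagate, and the number of configurations is large because concatenation on $\mathfrak{p}$ is itself defined by cases (equation \ref{specify}) that interleave path extension with intersection of range-sets in $\mathcal{G}^{0}$. The delicate point --- once again the difficulty flagged in Remark \ref{principal difficulty} --- is that the iterated $\mathcal{G}^{0}$-intersections produced by the two bracketings must agree; they do, because intersection in $\mathcal{G}^{0}$ is associative and is compatible with concatenation through the identity $r(x_{y})=r(x)\cap y$. A convenient way both to organize these cases and to guard against error is to compare the abstract product with the operator product $s_{\alpha}p_{A}s_{\beta}^{\ast}$ in a universal Cuntz--Krieger $\mathcal{G}$-family: the three rules of Definition \ref{inverse semigroup} are exactly these operator products computed from Tomforde's relations (i)--(iii) (see \cite[Lemmas 2.8, 2.9]{MTT}). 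Once one checks that this assignment is a well-defined injective $\ast$-homomorphism, associativity is inherited from the associativity of operator composition, which I regard as the cleanest route through the set-theoretic bookkeeping.
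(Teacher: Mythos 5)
Your overall framework is legitimate and genuinely different from the paper's. The paper proves associativity by a direct six-case analysis and then shows \emph{by hand} that the generalized inverse of $(x,y)$ is unique (comparing initial segments to force $s^{\prime}=(y,x)$). You instead invoke the classical characterization (regular semigroup with commuting idempotents $\Rightarrow$ inverse semigroup), and your verifications of regularity, $(x,y)(y,x)(x,y)=(x\cdot r(y),x)(x,y)=(x,x)(x,y)=(x,y)$, of the identification $E(S_{\mathcal{G}})=\{(x,x):x\in\mathfrak{p}\}\cup\{\omega\}$ (using that elements of $S_{\mathcal{G}}$ have equal-ranged coordinates), and of the commuting of idempotents are all correct. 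That part would be an acceptable, arguably cleaner, replacement for the paper's uniqueness argument.

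The gap is in associativity --- the step you yourself call the genuine work --- and specifically in the route you endorse as your actual proof. The map $\pi(x,y)=S_{x}S_{y}^{\ast}$ into a \emph{universal} Cuntz--Krieger $\mathcal{G}$-family is indeed multiplicative (this is essentially Theorem \ref{nat.correspondence} of the paper), but it is \emph{not injective}, and injectivity is exactly what you need to pull associativity back from $B(\mathcal{H})$. Concretely: if $v\in G^{0}$ emits exactly one edge $e$ (e.g.\ a single vertex with one loop, which is consistent with the no-sinks assumption), then relation (iv) of Definition \ref{cuntz-pimsner rep.} forces $p_{v}=s_{e}s_{e}^{\ast}$, hence
\[
\pi\bigl(\{v\},\{v\}\bigr)=p_{v}=s_{e}s_{e}^{\ast}=s_{e}p_{r(e)}s_{e}^{\ast}
=\pi\bigl((e,r(e)),(e,r(e))\bigr),
\]
although $(\{v\},\{v\})\neq((e,r(e)),(e,r(e)))$ in $S_{\mathcal{G}}$. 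So the universal family collapses distinct elements of $S_{\mathcal{G}}$ and cannot detect whether $(st)u=s(tu)$; the argument ``$\pi((st)u)=\pi(s(tu))$ therefore $(st)u=s(tu)$'' breaks down. To salvage the operator route you would need a family satisfying only relations (i)--(iii) --- a Toeplitz-type family, for instance partial isometries on $\ell^{2}(\mathfrak{p})$ with $S_{e}\delta_{z}=\delta_{e\cdot z}$ when $s(z)\in r(e)$ and $P_{A}\delta_{z}=\delta_{z}$ when $s(z)\subseteq A$ --- and then prove injectivity by hand (the domain of $S_{x}S_{y}^{\ast}$ recovers $y$, its range recovers $x$). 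None of that is supplied by citing \cite{MTT}, whose universal family satisfies (iv). Failing such a construction, you are thrown back on the direct case analysis, which you only sketch; carried out in full, that is precisely the paper's proof.
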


\begin{proof}
We first have to show that the product in $S_{\mathcal{G}}$ is associative. If
one of the terms is $\omega$, then this is obvious. So let $\left(
x_{i},y_{i}\right)  \in S_{\mathcal{G}}$ \ ($1\leq i\leq3$), and set
$s_{1}=\left(  x_{1},y_{1}\right)  \left(  \left(  x_{2},y_{2}\right)  \left(
x_{3},y_{3}\right)  \right)  $ and $s_{2}=(\left(  x_{1},y_{1}\right)  \left(
x_{2},y_{2}\right)  )\left(  x_{3},y_{3}\right)  $. We have to show that
$s_{1}=s_{2}$.

The cases that give $s_{1}\neq\omega$ are the following (for appropriate
ultrapaths $z,w$):

\begin{enumerate}
\item $y_{2}=x_{3}\cdot z$, for some $z\in\mathfrak{p}$ and $y_{1}=x_{2}\cdot
w$ for some $w\in\mathfrak{p}$;

\item $y_{2}=x_{3}$, and $y_{1}=x_{2}\cdot w$ for some $w\in\mathfrak{p}$;

\item $y_{2}=x_{3}\cdot z$, for some $z\in\mathfrak{p}$ and $y_{1}=x_{2}$;

\item $y_{2}=x_{3}\cdot z$, for some $z\in\mathfrak{p}$ and $x_{2}=y_{1}\cdot
w$ for some $w\in\mathfrak{p}$;

\item $x_{i}\in\mathcal{G}^{0}$, for some $i$, with $s\left(  x_{i}\right)
\cap r\left(  y_{i-1}\right)  \neq\emptyset$, together with the cases above;

\item $y_{i}\in\mathcal{G}^{0}$, for some $i$, with $r\left(  x_{i+1}\right)
\cap s\left(  y_{i}\right)  \neq\emptyset$, together with the cases above.
\end{enumerate}

One checks directly that in each case, $s_{2}=s_{1}$. ( In case 4, one needs
to consider separately the cases $x_{2}=y_{1}\cdot z^{\prime}$ and
$y_{1}=x_{2}\cdot z^{\prime}$ ). So if $s_{1}\neq\omega$, then $s_{1}=s_{2}$.
Similarly, one shows that if $s_{2}\neq\omega$, then $s_{2}=s_{1}$. The
associative law then follows.

Next we have to show that for each $s\in S_{\mathcal{G}}$, $s^{\ast}$ is the
only $s^{\prime}$ for which $ss^{\prime}s=s$ and $s^{\prime}ss^{\prime
}=s^{\prime}$. If $s=\omega$, then this is trivial. So let $s=\left(
x,y\right)  \neq\omega$. If $s^{\prime}$ is such that $ss^{\prime}s=s$, then
$s^{\prime}=\left(  w,z\right)  $ for some ultrapaths $w$, $z$, with $z$, $x$
and $y$, $w$ comparable. (See Notation \ref{Notation}) Suppose first $z$ has
$x$ as an initial segment. We shall show that $z=x$. A similar argument will
show that $x=z$ if $x$ has $z$ as an initial segment. So if $z$ has $x$ as an
initial segment then $z=x\cdot z^{\prime}$ for some $z^{\prime}\in\mathfrak
{p}$ with $s\left(  z^{\prime}\right)  \cap r\left(  x\right)  \neq\emptyset$.
Then from the equation $ss^{\prime}s=s$ and from the definition of the product
on $S_{\mathcal{G}}$, the equality, $\left(  x,y\right)  =\left(  x,y\right)
\left(  w,y\cdot z^{\prime}\right)  $, holds. We see then that the product,
$\left(  x,y\right)  \left(  w,y\cdot z^{\prime}\right)  $, is not $\omega$.
So we have to consider two cases.

\begin{enumerate}
\item [Case I ]$w$ has $y$ as an initial segment. So $w=y\cdot w^{\prime}$ for
some $w^{\prime}\in\mathfrak{p}$ with $s\left(  w^{\prime}\right)  \cap
r\left(  y\right)  \neq\emptyset$. Then we have $\left(  x,y\right)  =\left(
x,y\right)  \left(  y\cdot w^{\prime},y\cdot z^{\prime}\right)  =\left(
x\cdot w^{\prime},y\cdot z^{\prime}\right)  $, by definition. (See 2
Definition \ref{inverse semigroup}) Thus the equation, $\left(  x\cdot
w^{\prime},y\cdot z^{\prime}\right)  =\left(  x,y\right)  $, holds, which
implies that the equation, $y\cdot z^{\prime}=y$, holds as well. Therefore
$z^{\prime}$ belongs to $\mathcal{G}^{0}$, and $r\left(  x\right)  =r\left(
y\right)  \subseteq z^{\prime}$. Then $x=x\cdot z^{\prime}=z$.

\item[Case II] $y$ has $w$ as an initial segment. In this case a similar proof
gives us that $x=z$ as well.
\end{enumerate}

Similarly, by considering the equation $s^{\prime}ss^{\prime}=s^{\prime}$, one
can show that $w=y$, in the situation when $y$ and $w$ are comparable. Then
$s^{\prime}=s^{\ast}$. Clearly $ss^{\ast}s=s$ and $s^{\ast}ss^{\ast}=s^{\ast}$.
\end{proof}

The next theorem shows that there is a bijection between the class of
representations of the ultragraph $\mathcal{G}$, and \emph{certain} class of
representations of the inverse semigroup $S_{\mathcal{G}}$. Compare with
\cite[Theorem 2, (a), (b)]{APP}, and of course please keep in mind Remark
\ref{principal difficulty}.

\begin{theorem}
\label{nat.correspondence} There is a natural one-to-one correspondence
between :

\begin{enumerate}
\item [(a)]the class $R_{\mathcal{G}}$ of representations of $\mathcal{G}$; and

\item[(b)] the class $R_{S_{\mathcal{G}}}$ of representations $\pi$ of
$S_{\mathcal{G}}$ such that:

\begin{enumerate}
\item [(i)]$\pi(\omega)=0,$

\item[(ii)] $\pi\left(  v,v\right)  -\sum_{s\left(  e\right)  =v}\pi\left(
e,e\right)  =0$, for every $v\in G^{0}$, with $0<\left|  s^{-1}\left(
v\right)  \right|  <\infty$, and

\item[(iii)] $\pi\left(  A\cup B,A\cup B\right)  -\pi\left(  A,A\right)
-\pi\left(  B,B\right)  +\pi\left(  A\cap B,A\cap B\right)  =0$, for every
$A$, $B\in\mathcal{G}^{0}$.
\end{enumerate}
\end{enumerate}
\end{theorem}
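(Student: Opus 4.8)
The plan is to construct the correspondence in both directions and show they are mutually inverse. Given a representation of $\mathcal{G}$ in the sense of Definition \ref{cuntz-pimsner rep.}, i.e.\ a Cuntz--Krieger $\mathcal{G}$-family $\{s_e, p_A\}$, I would define a representation $\pi$ of $S_{\mathcal{G}}$ by the formula
\begin{equation*}
\pi(x,y) := s_x \, p_{r(x)} \, s_y^{\ast} = T_{((\alpha,A),(\beta,A))},
\end{equation*}
writing $x=(\alpha,A)$, $y=(\beta,A)$ with $A=r(x)=r(y)$, and $\pi(\omega)=0$. Here $s_x$ means $s_\alpha p_A$ with the convention from the excerpt that $s_A=p_A$ for length-zero ultrapaths. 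The main content of this direction is to verify that $\pi$ is a genuine $\ast$-homomorphism of inverse semigroups, i.e.\ that $\pi(s)^{\ast}=\pi(s^{\ast})$ and $\pi(st)=\pi(s)\pi(t)$ for the three product cases in Definition \ref{inverse semigroup}. The involution is immediate from $T^{\ast}_{((\alpha,A),(\beta,A))}=T_{((\beta,A),(\alpha,A))}$. For multiplicativity I would feed the three defining product formulas into the operator expression $s_x p_{r(x)} s_y^{\ast} \cdot s_{x'} p_{r(x')} s_{y'}^{\ast}$ and collapse the middle $s_y^{\ast} s_{x'}$ using the Cuntz--Krieger relations (ii), (iii) together with the fact that $s_\beta^{\ast} s_{\beta'}$ equals $p_{r(\beta)}$ or a tail factor when one path extends the other and vanishes otherwise; this is exactly the computation underlying \cite[Lemma 2.8, Lemma 2.9]{MTT}. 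Properties (i)(ii)(iii) in part (b) of the statement are then not extra hypotheses to impose but identities that $\pi$ automatically satisfies: (b)(i) holds by definition, (b)(ii) is a transcription of relation (iv) since $\pi(v,v)=p_v$ and $\pi(e,e)=s_e s_e^{\ast}$, and (b)(iii) is a transcription of the inclusion--exclusion part of relation (i), because $\pi(A,A)=p_A$.

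Conversely, given $\pi\in R_{S_{\mathcal{G}}}$ I would recover a $\mathcal{G}$-family by setting $p_A:=\pi(A,A)$ for $A\in\mathcal{G}^0$ and $s_e:=\pi(e,r(e))$ for $e\in\mathcal{G}^1$, where $(e,r(e))$ denotes the length-one ultrapath viewed in $\mathfrak{p}$. I must then check all four Cuntz--Krieger relations. Relation (ii) follows from $s_e^{\ast}s_e=\pi(r(e),e)\pi(e,r(e))=\pi(r(e),r(e))=p_{r(e)}$ using case 2 or 3 of the product. Relation (iii) follows similarly, with $s_e s_e^{\ast}=\pi(e,e)$ an idempotent dominated by $\pi(s(e),s(e))=p_{s(e)}$; the domination $\pi(e,e)\le\pi(s(e),s(e))$ should come from the order on idempotents, since $(e,e)(s(e),s(e))=(e,e)$ once one notes $s(e)\in\mathcal{G}^0$ is an initial segment of $e$. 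Relation (iv) is exactly hypothesis (b)(ii), and the three parts of relation (i)---namely $p_{\emptyset}=0$, $p_A p_B=p_{A\cap B}$, and the inclusion--exclusion formula---come from the hypotheses together with the idempotent multiplication in $S_{\mathcal{G}}$: the product $(A,A)(B,B)$ in case the intersection is nonempty equals $(A\cap B, A\cap B)$ by the $x\cdot y = x\cap y$ clause of (\ref{specify}), giving $p_A p_B = p_{A\cap B}$, while $p_{\emptyset}=0$ follows since an empty intersection yields the zero $\omega$, and the additive $p_{A\cup B}$ identity is precisely (b)(iii). Finally I would verify that these two assignments are inverse to each other, which is a matter of chasing the definitions (e.g.\ starting from a $\mathcal{G}$-family, forming $\pi$, and reading off $\pi(e,r(e))=s_e p_{r(e)}=s_e$), and that the correspondence is natural with respect to intertwiners.

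The main obstacle, as Remark \ref{principal difficulty} flags, is relation (i): the projections $\{p_A\}$ must behave like a finitely additive ``proto-spectral measure'' on the lattice $\mathcal{G}^0$, and nothing in the bare definition of an inverse-semigroup representation forces the inclusion--exclusion identity $p_{A\cup B}=p_A+p_B-p_{A\cap B}$ or the normalization $p_{\emptyset}=0$. This is exactly why part (b) must single out the subclass $R_{S_{\mathcal{G}}}$ by imposing (b)(i) and (b)(iii): these are precisely the relations that an arbitrary $\ast$-representation of the abstract inverse semigroup can violate, since within $S_{\mathcal{G}}$ the elements $(A\cup B,A\cup B)$, $(A,A)$, $(B,B)$, $(A\cap B,A\cap B)$ are algebraically independent idempotents with no built-in additive relation. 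So the crux of the proof is not the multiplicativity computations (which are bookkeeping driven by \cite[Lemmas 2.8, 2.9]{MTT}) but rather recognizing that the additive lattice relations of condition (i) are exogenous to inverse-semigroup theory and hence must be—and are—captured verbatim by the defining conditions (b)(i)--(b)(iii). Once that dictionary is in place, both directions and the inverse-bijection check are routine.
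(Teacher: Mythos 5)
Your proposal is correct and takes essentially the same route as the paper's own proof: you define $\pi(x,y)=S_{x}S_{y}^{\ast}$ with $S_{x}=s_{\alpha}p_{A}$, check multiplicativity case-by-case via Tomforde's Lemmas 2.8 and 2.9, and conversely recover the family by $p_{A}:=\pi(A,A)$ and $s_{e}:=\pi(e,r(e))$, with (b)(i)--(iii) transcribing exactly relations (i) and (iv) of the Cuntz--Krieger definition, which is precisely the paper's argument (including the point, flagged in Remark \ref{principal difficulty}, that the additive lattice relations must be imposed on the inverse-semigroup side). The only cosmetic difference is that the paper sets $p_{\emptyset}:=0$ by fiat in the converse direction, whereas you derive the same normalization from disjoint idempotents multiplying to $\omega$.
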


\begin{proof}
Let $\left\{  p_{A},s_{e}:A\in\mathcal{G}^{0},e\in\mathcal{G}^{1}\right\}  $
be a representation of $\mathcal{G}$, realized on a Hilbert Space
$\mathcal{H}$. Define a $\ast$-map $\pi:S_{\mathcal{G}}\rightarrow B\left(
\mathcal{H}\right)  $ by $\pi\left(  x,y\right)  =S_{x}S_{y}^{\ast}$, and
$\pi\left(  \omega\right)  =0$, where $S_{x}$ is defined to be $s_{\alpha
}p_{A}$ if $x=\left(  \alpha,A\right)  $, and $p_{A}$ if $x=A\in
\mathcal{G}^{0}$.\ Then $\pi$ is a $\ast$-homomorphism. The proof is simple.
One checks that $\pi\left(  st\right)  =\pi\left(  s\right)  \pi\left(
t\right)  $ for the different kinds of product using properties of the
generator $s_{\alpha}p_{A}s_{\beta}^{\ast}$ for $C^{\ast}\left(
\mathcal{G}\right)  $. See \cite[Lemma 2.8 and Lemma 2.9]{MTT}. For example
checking carefully all the cases, we have $\pi\left[  \left(  w,z\right)
\right]  \pi\left[  \left(  z\cdot x^{\prime},y\right)  \right]  =(S_{w}%
S_{z}^{\ast})(S_{z\cdot x^{\prime}}S_{y}^{\ast})=S_{w\cdot x^{\prime}}%
S_{y}^{\ast}$=$\pi\left[  \left(  w\cdot x^{\prime},y\right)  \right]
=\pi\left[  \left(  w,z\right)  \left(  z\cdot x^{\prime},y\right)  \right]
$. Since $\pi\left(  \omega\right)  =0$ and $s_{e}s_{e}^{\ast}=\pi\left(
e,e\right)  $, it follows from (i) and (iv) of Definition \ref{cuntz-pimsner
rep.} that $\pi\in R_{S_{\mathcal{G}}}$.

Conversely, any $\pi\in R_{S_{\mathcal{G}}}$ determines an element of
$R_{\mathcal{G}}$ by taking $p_{A}=:\pi\left(  A,A\right)  $, if
$A\neq\emptyset$ and $0$ otherwise. For $e\in\mathcal{G}^{1}$ take $s_{e}%
:=\pi\left(  e,r\left(  e\right)  \right)  $. Since $p_{\emptyset}=0$ and
$p_{A}p_{B}=\pi\left(  A,A\right)  \pi\left(  B,B\right)  =\pi\left(  A\cap
B,A\cap B\right)  =p_{A\cap B}$, (i) of Definition \ref{cuntz-pimsner rep.}
follows. Also since $s_{e}s_{e}^{\ast}=\pi\left(  e,r\left(  e\right)
\right)  \pi\left(  r\left(  e\right)  ,e\right)  =\pi\left(  e,e\right)  $,
(iv) of Definition \ref{cuntz-pimsner rep.} follows as well. (i) and (iii)
follow since $\pi$ is a $\ast$-homomorphism on $S_{\mathcal{G}}$. This
establishes the correspondence between the classes of representations of (a)
and (b).
\end{proof}

Every representation $\pi$ of $S_{\mathcal{G}}$ by partial isometries on
Hilbert space gives a bounded representation $\pi$ of $l^{1}\left(
S_{\mathcal{G}}\right)  $ in a natural way, and $C^{\ast}\left(
S_{\mathcal{G}}\right)  $ is just the enveloping $C^{\ast}$-algebra of
$l^{1}\left(  S_{\mathcal{G}}\right)  $ obtained by taking the biggest norm
coming from all such $\pi$'s. The $C^{\ast}$-algebra that we want here, which
we will denote by $C_{0}^{\ast}\left(  S_{\mathcal{G}}\right)  $, is obtained
in the same way but using only $\pi$'s for which $\pi\left(  \omega\right)
=0=\pi(\left(  v,v\right)  -\sum_{s\left(  e\right)  =v}\left(  e,e\right)
)$, for every $v\in G^{0}$, with $0<\left\vert s^{-1}\left(  v\right)
\right\vert <\infty$ and for which $\pi(\left(  A\cup B,A\cup B\right)
-\left(  A,A\right)  -\left(  B,B\right)  +\left(  A\cap B,A\cap B\right)  )$
$=$ $0$, for every $A$ and $B\in\mathcal{G}^{0}$. In fact $C_{0}^{\ast}\left(
S_{\mathcal{G}}\right)  $ is the quotient $C^{\ast}$-algebra $C^{\ast
}(S_{\mathcal{G}})/I$, where $I$ is the closed ideal of $C^{\ast}(l^{1}\left(
S_{\mathcal{G}}\right)  )$ generated by elements of the form: $\omega$,
$\left(  v,v\right)  -\sum_{s\left(  e\right)  =v}\left(  e,e\right)  $, for
every $v\in G^{0}$, with $0<\left\vert s^{-1}\left(  v\right)  \right\vert
<\infty$ and $\left(  A\cup B,A\cup B\right)  -\left(  A,A\right)  -\left(
B,B\right)  +\left(  A\cap B,A\cap B\right)  $, for every $A$ and
$B\in\mathcal{G}^{0}$. \emph{A priori} the quotient $C_{0}^{\ast
}\left(  S_{\mathcal{G}}\right)  $ could be zero, but Tomforde shows that it
isn't, in \cite{MTT}, and, of course, our analysis will show this, too.

The next objective is to identify the universal groupoid of $S_{\mathcal{G}}$
for a general ultragraph $\mathcal{G}$. The universal groupoid (\cite[Ch.
4]{APB}) $H$ of a countable inverse semigroup $S$ is constructed as follows.
The unit space $H^{\left(  0\right)  }$ of $H$ is the set of non-zero
semicharacters, i.e., homomorphisms, $\chi$, from the commutative inverse
subsemigroup of idempotents $E\left(  S\right)  $ in $S$ to the semigroup
$\{0,1\}$ (under multiplication). The topology on $H^{\left(  0\right)  }$ is
the topology of pointwise convergence on $E\left(  S\right)  $. This implies
that the family of sets $D_{e,e_{1,\ldots},e_{n}}=D_{e}\cap D_{e_{1}}^{c}%
\cap...\cap D_{e_{n}}^{c}$, (with $c$ standing for {}%
``complement\textquotedblright\ and $e,e_{i}\in E\left(  S\right)  $, $e\geq
e_{i}$, $1\leq i\leq n$) is a basis for the topology of $H^{\left(  0\right)
}$, \cite[Chap.4, p.174]{APB}. With respect to this topology, the space
$H^{\left(  0\right)  }$ is locally compact, totally disconnected and
Hausdorff, \cite[p.173]{APB}. There is a natural right action of $S$ on
$H^{\left(  0\right)  }$ given as follows. First, an element $\chi\in
H^{\left(  0\right)  }$ is in the domain $D_{s}$ of $s\in S$ if $\chi\left(
ss^{\ast}\right)  =1$. The element $\chi\cdot s\in H^{\left(  0\right)  }$ is
then defined by the equation $(\chi\cdot s)\left(  e\right)  =\chi\left(
ses^{\ast}\right)  $, for $e\in E\left(  S\right)  $. The map $\chi
\rightarrow\chi\cdot s$ is a homeomorphism from $D_{s}$ onto $D_{s^{\ast}}$.
Theorem 4.3.1 of \cite{APB} shows that the universal groupoid $H$ for $S$ is
the quotient
\[
\{\left(  \chi,s\right)  :\chi\in D_{s},s\in S\}/\backsim
\]
where, by definition, $\left(  \chi,s\right)  \backsim\left(  \chi^{\prime
},t\right)  $ whenever $\chi=\chi^{\prime}$ and there exists $e\in E\left(
S\right)  $ such that $\chi\left(  ss^{\ast}\right)  =\chi\left(  tt^{\ast
}\right)  $ and $es=et$. That is, two pairs $\left(  \chi,s\right)  $ and
$\left(  \chi^{\prime},t\right)  $ are equivalent if and only if $\chi
=\chi^{\prime}$ and $s$ and $t$ have the same germ at $\chi$. The composable
pairs are pairs of the form $\left(  \overline{\left(  \chi,s\right)
},\overline{\left(  \chi\cdot s,t\right)  }\right)  $, where $\chi\in D_{s}$,
$\chi\cdot s\in D_{t}$, and $s,t\in S$; and the product and inversion on $H$
are given by the maps $\left(  \overline{\left(  \chi,s\right)  }%
,\overline{\left(  \chi\cdot s,t\right)  }\right)  \rightarrow\overline
{\left(  \chi,st\right)  }$ and $\overline{\left(  \chi,s\right)  }%
\rightarrow\overline{\left(  \chi\cdot s,s^{\ast}\right)  }$, respectively.
Also $H$ is an $r$-discrete groupoid, where the topology on $H$ is the germ
topology. It has a basis consisting of sets of the form $D(U,s)$, where $s\in
S$, $U$ is an open subset of $D_{s}$, and $D(U,s):=\{\overline{(\chi,s)}%
:\chi\in U\}$. Further, the map $\Psi$, where $\Psi\left(  s\right)
=\{\overline{\left(  \chi,s\right)  }:\chi\in D_{s}\}$ is an inverse semigroup
isomorphism from $S$ into the ample semigroup $H^{a}$. (For any $r$-discrete
groupoid $G$, the \emph{ample semigroup} $G^{a}$ is the inverse semigroup of
compact open, Hausdorff $G$-sets in $G$, see \cite[Chap.2, Definition 2.2.4,
Proposition 2.2.6]{APB}).

The description of the universal groupoid $H=H_{\mathcal{G}}$ of
$S=S_{\mathcal{G}}$ is in many respects similar to the description of the
universal groupoid of the graph inverse semigroup associated to a graph.
However, there are some important differences. To highlight them, we follow as
closely as possible the discussion for the directed graph inverse semigroup
obtained by Paterson in \cite{APP}. The key is to identify the unit space
$H^{\left(  0\right)  }$ of $H$. The semigroup of idempotents of
$S_{\mathcal{G}}$, which we denote by $E\left(  S_{\mathcal{G}}\right)  $, is
the set $\{\left(  x,x\right)  :x\in\mathfrak{p}\}\cup\left\{  \omega\right\}
$. Recall that for any inverse semigroup $S$, there is a natural order on the
idempotent subsemigroup $E(S)$ defined by the prescription $e\leq f$ \ if and
only if $ef=e$, $(e,$ $f\in E\left(  S\right)  )$. In our setting, the order
on $E\left(  S_{\mathcal{G}}\right)  $ may be described in terms of path
length and set inclusion, as the following remark indicates. We leave the
proof to the reader.

\begin{remark}
\label{order in E}If the product in $E\left(  S_{\mathcal{G}}\right)  $,
$\left(  x,x\right)  \left(  z,z\right)  $, is not $\omega$ then the
inequality, $\left(  z,z\right)  \leq\left(  x,x\right)  $, holds if and only
if either $|z|>|x|$ or, if $|z|=|x|$, then $r\left(  z\right)  \subseteq r\left(
x\right)  $.
\end{remark}

As is the case with any idempotent semigroup, the elements in $E\left(
S_{\mathcal{G}}\right)  $ can themselves be regarded as semicharacters on
$E(S_{\mathcal{G}})$, and for each element, there is a filter associated with
it. That is, given $e\in E(S_{\mathcal{G}})$, then $\chi_{e}$ is the
semicharacter of $E\left(  S_{\mathcal{G}}\right)  $ defined by: $\chi
_{e}(f)=1$ if $f\geq e$ and is $0$ otherwise. Its filter $\tilde{e}$, is the
set of idempotents $\tilde{e}:=\{f\in E(S_{\mathcal{G}}):f\geq e\}$, see
\cite[p.173-174]{APB}. That is, $\tilde{e}$ is the principal filter determined
by $e$. Furthermore, the set $\widetilde{E\left(  S_{\mathcal{G}}\right)
}:=\{\tilde{e}:e\in E\left(  S_{\mathcal{G}}\right)  \}$ is dense in the set
of all nonzero semicharacters of $E\left(  S_{\mathcal{G}}\right)  $, which we
shall denote by $\widehat{E\left(  S_{\mathcal{G}}\right)  }$. (See \cite[
Proposition 4.3.1 p.174]{APB}\footnote{It is customary to denote the principal
filter determined by idempotent $e$ by $\overline{e}$, and then we might write
$\overline{E\left(  S\right)  }$ for the collection of all such filters.
However, in our setting, this leads to awkward statements like {}%
``$\overline{E\left(  S\right)  }$ is dense in $X$'', which in turn would lead one to
believe $\overline{E\left(  S\right)  }=X$.}.) The collection of \ {}``subsets
of generalized vertices''\ in the ultragraph $\mathcal{G}$, which, recall, is
denoted $\mathcal{G}^{0}$ and is an idempotent inverse semigroup in its own right under
intersection, may be viewed as an sub-inverse-semigroup of $E\left(
S_{\mathcal{G}}\right)  $ via the map $A\rightarrow(A,A)$. Then every
semicharacter on $E(S_{\mathcal{G}})$ restricts to one on $\mathcal{G}^{0}$.
This leads to the inclusion, $H_{\mathcal{G}}^{\left(  0\right)  }%
=\widehat{E\left(  S_{\mathcal{G}}\right)  }\subseteq\widehat{\mathcal{G}^{0}%
}$, where $\widehat{\mathcal{G}^{0}}$ denotes the set of all non-zero
semicharacters of $\mathcal{G}^{0}$. The topology on $\widehat{\mathcal{G}%
^{0}}$ is the topology of pointwise convergence on $\mathcal{G}^{0}$.
Consequently, the family of compact open sets, $D\left(  A,A\right)
:=\{\chi\in\widehat{\mathcal{G}^{0}}:\chi\left(  A\right)  =1\}$,
$A\in\mathcal{G}^{0}$, is a subbasis. It follows that the space,
$\widehat{\mathcal{G}^{0}}$, is locally compact and Hausdorff as well. (See
\cite[Chapter 4, p.174]{APB}.) We want to emphasize here that the space
$\widehat{\mathcal{G}^{0}}$ is \emph{huge}. Since it contains the discrete
space $G^{0}$, $\widehat{\mathcal{G}^{0}}$ contains the Stone-\v{C}ech
compactification $\beta G^{0}$ of $G^{0}$. Since $\mathcal{G}^{0}$ is an
idempotent inverse semigroup, the elements in $\mathcal{G}^{0}$ can themselves
be regarded as semicharacters of $\mathcal{G}^{0}$, and each element
determines a principal filter. That is, given $A\in\mathcal{G}^{0}$, then
$\chi_{A}$ is the semicharacter of $\mathcal{G}^{0}$ defined by: $\chi
_{A}(B)=1$ if $A\subseteq B$ and is $0$ otherwise. Its filter $\tilde{A}$, is
the set given then by: $\tilde{A}=\{B\in\mathcal{G}^{0}:A\subseteq B\}$.
Consequently, the set of all semicharacters of the form, $\chi_{A}$,
$A\in\mathcal{G}^{0}$, which we shall denote by, $\widetilde{\mathcal{G}^{0}}%
$, is dense in $\widehat{\mathcal{G}^{0}}$ \cite[ Proposition 4.3.1
p.174]{APB}. This fact plays an important
role in our efforts to overcome the difficulties alluded to in Remark
\ref{principal difficulty}.

Now we proceed to identify the unit space $H_{\mathcal{G}}^{\left(  0\right)
}=\widehat{E\left(  S_{\mathcal{G}}\right)  }$, of the universal groupoid
$H=H_{\mathcal{G}}$ of $S=S_{\mathcal{G}}$ and its topology in more concrete
terms. It is more convenient to discuss the space $\widehat
{E\left(  S_{\mathcal{G}}\right)  }$ in terms of filters rather than in terms of
 semicharacters.
Recall that given a nonzero semicharater $\chi$ in $\widehat{E\left(
S_{\mathcal{G}}\right)  }$ its filter, $\mathcal{A}_{\chi}$, is the set
$\mathcal{A}_{\chi}:=\left\{  \left(  x,x\right)  \in E\left(  S_{\mathcal{G}%
}\right)  :\chi\left(  x,x\right)  =1\right\}  $. (See \cite[p.173-174]{APB}.)
Each ultrapath, $y\in\mathfrak{p}$, defines a semicharacter of $E\left(
S_{\mathcal{G}}\right)  $ via the equation,
\begin{equation}
y\left(  x,x\right)  =\left\{
\begin{array}
[c]{cc}%
1 & \text{if }\left(  x,x\right)  \left(  y,y\right)  \neq\omega\text{,
}|x|<|y|\text{ or; if }|x|=|y|\text{, then }r\left(  x\right)  \supseteq
r\left(  y\right) \\
0 & \text{otherwise.}%
\end{array}
\right.  \label{ultras}%
\end{equation}
Also an infinite path $\gamma\in\mathfrak{p}^{\infty}$ defines a semicharacter
of $E\left(  S_{\mathcal{G}}\right)  $ via the equation:
\begin{equation}
\gamma\left(  x,x\right)  =\left\{
\begin{array}
[c]{cc}%
1 & \text{if }\gamma=x\cdot\gamma^{\prime},\gamma^{\prime}\in\mathfrak
{p}^{\infty},s\left(  \gamma^{\prime}\right)  \in r\left(  x\right) \\
0 & \text{otherwise.}%
\end{array}
\right.  \label{infinitep}%
\end{equation}
Consequently, the filter in $E\left(  S_{\mathcal{G}}\right)  $ determined by
the ultrapath $y$, $\mathcal{A}_{y}$, is the set, $\mathcal{A}_{y}:=\{\left(
x,x\right)  \in E\left(  S_{\mathcal{G}}\right)  :\left(  x,x\right)  \left(
y,y\right)  \neq\omega$ ; $|x|<|y|$ or, if $|x|=|y|$, then $r\left(  x\right)
\supseteq r\left(  y\right)  $ $\}$, while the filter in $E\left(
S_{\mathcal{G}}\right)  $ determined by an infinite path $\gamma$,
$\mathcal{A}_{\gamma}$, is the set $\mathcal{A}_{\gamma}:=\{\left(
x,x\right)  \in E\left(  S_{\mathcal{G}}\right)  :\gamma=x\cdot\gamma^{\prime
},\gamma^{\prime}\in\mathfrak{p}^{\infty},s\left(  \gamma^{\prime}\right)  \in
r\left(  x\right)  \}$. 


\begin{remark}
\label{equality of filters} The correspondence between ultrapaths and filters
is one-to-one, that is for ultrapaths $y$ and $z$ in $\mathfrak{p}$,
$\mathcal{A}_{y}=\mathcal{A}_{z}$ if and only if $y=z$. Likewise for infinite
paths $\gamma$ and $\gamma^{\prime}$ in $\mathfrak{p}^{\infty}$,
$\mathcal{A}_{\gamma}=\mathcal{A}_{\gamma^{\prime}}$ if and only if
$\gamma=\gamma^{\prime}$. Furthermore, if $y$ is an ultrapath and if $\gamma$
is an infinite path, then $\mathcal{A}_{y}\neq\mathcal{A}_{\gamma}$.
\end{remark}

\begin{proof}
Recall that if $y$ is an ultrapath the set $\mathcal{A}_{y}$ is given by
$\mathcal{A}_{y}=\{\left(  x,x\right)  \in E\left(  S_{\mathcal{G}}\right)
:\left(  x,x\right)  \left(  y,y\right)  \neq\omega$ ; $|x|<|y|$ or, if
$|x|=|y|$, then $r\left(  x\right)  \supseteq r\left(  y\right)  \}$.
Obviously, if $y=z$ then $\mathcal{A}_{y}=\mathcal{A}_{z}$. So suppose that
the equality, $\mathcal{A}_{y}=\mathcal{A}_{z}$, holds. Then $\left(
y,y\right)  \in\mathcal{A}_{z}$ and $\left(  z,z\right)  \in\mathcal{A}_{y}$.
Hence the product in $E\left(  S_{\mathcal{G}}\right)  $, $\left(  y,y\right)
\left(  z,z\right)  $, is not $\omega$. Furthermore, since $\left(
y,y\right)  \in\mathcal{A}_{z}$, we see that $|y|\leq|z|$, and since
$\left(  z,z\right)  \in\mathcal{A}_{y}$, it follows that $\left|  y\right|
\geq\left|  z\right|  $. Hence $|z|=|y|$. But again, since $\left(
y,y\right)  \in\mathcal{A}_{z}$ it follows that $r\left(  z\right)  \subseteq
r\left(  y\right)  $ and since $\left(  z,z\right)  \in\mathcal{A}_{y}$ it
follows that then $r\left(  y\right)  \subseteq r\left(  z\right)  $.
Consequently, the equality, $r\left(  y\right)  =r\left(  z\right)  $, also
holds. To see that $y=z$, suppose first that one of the ultrapaths, $y$ or
$z$, is in $\mathcal{G}^{0}$. Then since $\left|  y\right|  =\left|  z\right|
$, it follows that the other is also in $\mathcal{G}^{0}$. But since $r\left(
y\right)  =r\left(  z\right)  $, it follows that $y=z$. Now suppose that $y$
and $z$ have positive length and recall that the inequality, $\left(
y,y\right)  \left(  z,z\right)  \neq\omega$, means that the paths $y$ and $z$
are comparable. (See Definition \ref{inverse semigroup} and see the paragraph following
 equation (\ref{specify}).) Suppose that $y$ has $z$ as an initial
segment. That is, suppose that $y=z\cdot y^{\prime}$ for some $y^{\prime}%
\in\mathfrak{p}$. (See the paragraph following equation (\ref{specify}).) Then
since $\left|  z\right|  =\left|  y\right|  $, it follows that $y^{\prime}%
\in\mathcal{G}^{0}$, which yields the equality, $y=z_{y^{\prime}}$. (See
equation (\ref{specify}).) But since, $r\left(  y\right)  =r\left(  z\right)
$, it follows that, $r\left(  z\right)  =r\left(  y\right)  =r\left(
z_{y^{\prime}}\right)  =r\left(  z\right)  \cap y^{\prime}$. Thus, the
inclusion, $r\left(  z\right)  \subseteq y^{\prime}$, holds and hence,
$y=z_{y^{\prime}}=z$. (See the paragraph following equation (\ref{specify}).) A
similar argument shows that $z=y$, in the case when, $z=y\cdot z^{\prime}$ for
some $z^{\prime}\in\mathfrak{p}$.

Next recall that if $\gamma$ is an infinite path, then $\mathcal{A}_{\gamma
}=\{\left(  x,x\right)  \in E\left(  S_{\mathcal{G}}\right)  :\gamma
=x\cdot\gamma^{\prime},\gamma^{\prime}\in\mathfrak{p}^{\infty},s\left(
\gamma^{\prime}\right)  \in r\left(  x\right)  \}$. So evidently if
$\gamma=\gamma^{\prime}$ then $\mathcal{A}_{\gamma}=\mathcal{A}_{\gamma
^{\prime}}$. Suppose, conversely, that $\mathcal{A}_{\gamma}=\mathcal{A}%
_{\gamma^{\prime}}$ and write $\gamma=e_{1}e_{2}\ldots$ and $\gamma^{\prime
}=e_{1}^{\prime}e_{2}^{\prime}\ldots$. For $i\geq1$, and write $\alpha
_{i}:=e_{1}\ldots e_{i}$ and write $\alpha_{i}^{\prime}:=e_{1}^{\prime}\ldots
e_{i}^{\prime}$. Then, of course, $\left|  \alpha_{i}\right|  =\left|
\alpha_{i}^{\prime}\right|  =i$. Since $\mathcal{A}_{\gamma}=\mathcal{A}%
_{\gamma^{\prime}}$, we see that $\left(  \alpha_{i},\alpha_{i}\right)
\in\mathcal{A}_{\gamma^{\prime}}$ and $\left(  \alpha_{i}^{\prime},\alpha
_{i}^{\prime}\right)  \in\mathcal{A}_{\gamma}$. Hence the equations,
$\alpha_{i}^{\prime}e_{i+1}^{\prime}\ldots=\gamma^{\prime}=\alpha_{i}%
\eta^{\prime}$ and $\alpha_{i}e_{i+1}\ldots=\gamma=\alpha_{i}^{\prime}\eta$,
hold for some infinite paths $\eta$ and $\eta^{\prime}$. But since $\left|
\alpha_{i}\right|  =\left|  \alpha_{i}^{\prime}\right|  $ it follows that
$\alpha_{i}=\alpha_{i}^{\prime}$. Since $i\geq1$ was fixed but arbitrary, it
follows that $e_{i}=e_{i}^{\prime}$ for each $i$. Hence $\gamma=\gamma
^{\prime}$. The last assertion is clear, since if $\gamma$ is an infinite
path, then $\mathcal{A}_{\gamma}$ contains elements $(x,x)$ with $|x|$
arbitrarily large.
\end{proof}

The unit space $H_{\mathcal{G}}^{\left(  0\right)  }=\widehat{E\left(
S_{\mathcal{G}}\right)  }$, of the universal groupoid $H=H_{\mathcal{G}}$ of
$S=S_{\mathcal{G}}$ has an explicit parametrization given by the following proposition.

\begin{proposition}
\label{unitspace} The set of semicharacters on $E\left(  S_{\mathcal{G}%
}\right)  $, $H_{\mathcal{G}}^{\left(  0\right)  }=\widehat{E\left(
S_{\mathcal{G}}\right)  }$, may be identified with the disjoint union
$\mathfrak{p}\cup\mathfrak
{p}^{\infty}\cup\left\{  \omega\right\}  $.
\end{proposition}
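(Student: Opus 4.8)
The plan is to exhibit an explicit bijection $\Phi$ from $\mathfrak{p}\cup\mathfrak{p}^{\infty}\cup\{\omega\}$ onto $\widehat{E(S_{\mathcal{G}})}$ and to verify that it is well defined, injective, and surjective. On $\mathfrak{p}$ and $\mathfrak{p}^{\infty}$ the map is forced on us: it sends an ultrapath $y$ to the semicharacter of equation (\ref{ultras}) and an infinite path $\gamma$ to the semicharacter of equation (\ref{infinitep}). The remaining point $\omega$ must go to the one degenerate semicharacter, namely the constant character $\mathbf{1}$ with $\mathbf{1}(e)=1$ for all $e\in E(S_{\mathcal{G}})$; this is the unique nonzero semicharacter that does not vanish on $\omega$, since $\omega$ is absorbing and $\chi(\omega)=1$ forces $1=\chi(\omega)=\chi(\omega e)=\chi(\omega)\chi(e)=\chi(e)$ for every $e$. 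That $\Phi$ lands in $\widehat{E(S_{\mathcal{G}})}$ amounts to checking multiplicativity $\chi(ef)=\chi(e)\chi(f)$ of each of these characters on $E(S_{\mathcal{G}})$; for the path characters this is a direct computation from the product rules of Definition \ref{inverse semigroup} together with the order description in Remark \ref{order in E}, and equations (\ref{ultras})--(\ref{infinitep}) already record that these formulas define semicharacters.

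Injectivity of $\Phi$ is then immediate from Remark \ref{equality of filters}, which tells us that distinct ultrapaths and distinct infinite paths give distinct filters and that no ultrapath character coincides with an infinite-path character; the point $\omega$ is separated from all of these because its image is the only character with $\mathbf{1}(\omega)=1$.

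The substance of the proof is surjectivity. Given a nonzero semicharacter $\chi$ with filter $\mathcal{A}_{\chi}=\{(x,x):\chi(x,x)=1\}$, I would first dispose of the case $\chi(\omega)=1$, which by the computation above forces $\chi=\mathbf{1}=\Phi(\omega)$. So assume $\chi(\omega)=0$, i.e. $\mathcal{A}_{\chi}$ is a proper filter. The key structural observation is that any two members $(x,x),(z,z)$ of $\mathcal{A}_{\chi}$ are comparable: their product lies in $\mathcal{A}_{\chi}$, hence is not $\omega$, and a nonzero product of idempotents means exactly that $x$ and $z$ are comparable (Definition \ref{inverse semigroup} and the paragraph following equation (\ref{specify})). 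Thus $\mathcal{A}_{\chi}$ is totally ordered by the initial-segment relation, refined by reverse inclusion of range sets at a fixed length. Now I split on whether the lengths $\{|x|:(x,x)\in\mathcal{A}_{\chi}\}$ are unbounded. If they are, the nested edge-parts of a cofinal sequence assemble into a unique $\gamma\in\mathfrak{p}^{\infty}$, and using (\ref{infinitep}) together with upward-closure of the filter one checks term by term that $\mathcal{A}_{\chi}=\mathcal{A}_{\gamma}$, so $\chi=\Phi(\gamma)$. If the lengths are bounded, say by $N$, then all length-$N$ members share a single edge-path $\alpha$ and differ only in a range set $A\in\mathcal{G}^{0}$ with $A\subseteq r(\alpha)$ (for $N=0$ this reduces to a filter on $\mathcal{G}^{0}$ itself); these range sets form a downward-directed family, and if it attains a least element $A_{\ast}$ then $\mathcal{A}_{\chi}=\mathcal{A}_{(\alpha,A_{\ast})}$ is principal and $\chi=\Phi((\alpha,A_{\ast}))$.

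The main obstacle is precisely this last point: showing that the downward-directed family of range sets in $\mathcal{G}^{0}$ produced by a bounded-length filter attains a least element, so that the filter is principal rather than a genuinely non-principal limit. This is exactly the difficulty flagged in Remark \ref{principal difficulty} --- that $\mathcal{G}^{0}$ is only a ``lattice'' in quotation marks and need not behave like a complete lattice --- and it is where the finitely-generated combinatorial structure of $\mathcal{G}^{0}$ (its generation from the singletons $\{v\}$ and the sets $r(e)$ under finite unions and intersections) must be brought to bear to exclude properly non-principal behavior; the length-$0$ case, where the problem reduces to controlling filters on $\mathcal{G}^{0}$ alone, is the cleanest place to confront it. By contrast, the unbounded-length case is routine once comparability is in hand, since the infinite path is literally read off from the chain and equation (\ref{infinitep}) matches the two filters directly.
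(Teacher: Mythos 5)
You follow the same route as the paper's own proof: identify a semicharacter $\chi$ with its filter $\mathcal{A}_{\chi}$, dispose of $\chi(\omega)=1$ by absorbency, use the fact that any two members of $\mathcal{A}_{\chi}$ are comparable (their product lies in the filter, hence is not $\omega$), and split on whether $M=\{|x|:(x,x)\in\mathcal{A}_{\chi}\}$ is bounded; your unbounded case is exactly the paper's Case II. The one place where you stop short --- producing a \emph{least} element among the top-length range sets in the bounded case --- is a genuine gap, not a routine verification, and your instinct that it is the crux is correct. What you may not realize is that the paper's own Case I elides precisely this point: from $(x,x)\in\mathcal{A}_{\chi}$ and $(y,y)\in\mathcal{A}_{\chi}$ with $|y|=\max M\geq |x|$ and $(x,x)(y,y)\neq\omega$, the paper concludes ``$y=x\cdot y'$ for some $y'\in\mathfrak{p}$''. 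But when $|x|=|y|$, comparability (Definition \ref{inverse semigroup} and Remark \ref{order in E}) only gives that $x$ and $y$ have the same edge path and nested range sets, i.e.\ \emph{one of} $y=x\cdot y'$ or $x=y\cdot x'$ holds with $y',x'\in\mathcal{G}^{0}$; if $r(x)\subsetneq r(y)$ only the latter holds, and then $(x,x)\notin\mathcal{A}_{y}$. So the paper tacitly assumes that the maximal-length element can also be chosen with minimal range --- exactly the least-element property you flagged as unproved.

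Moreover, the missing step cannot be supplied in general, so your plan (and the paper's Case I) cannot be completed as stated. Take $G^{0}=\{1,2,3,\ldots\}$, $\mathcal{G}^{1}=\{e_{n}:n\geq 1\}$ with $s(e_{n})=n$ and $r(e_{n})=\{m:m\geq n\}$; there are no sinks. Define $\chi(\omega)=0$, $\chi(x,x)=0$ whenever $|x|\geq 1$, and for $A\in\mathcal{G}^{0}$ set $\chi(A,A)=1$ if and only if $r(e_{n})\subseteq A$ for some $n$. Multiplicativity is a short case check: the only nontrivial points are that $r(e_{n})\subseteq A\cap B$ forces $r(e_{n})\subseteq B$, and that a product of two positive-length idempotents is either $\omega$ or again of positive length. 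Thus $\chi\in\widehat{E\left(S_{\mathcal{G}}\right)}$ with $M=\{0\}$ bounded, but $\mathcal{A}_{\chi}=\{(A,A):r(e_{n})\subseteq A\text{ for some }n\}$ is a filter with no least element; by Remark \ref{equality of filters} it is not $\mathcal{A}_{y}$ for any $y\in\mathfrak{p}$, it is not $\mathcal{A}_{\gamma}$ for any $\gamma\in\mathfrak{p}^{\infty}$ (those contain positive-length idempotents), and it is not the constant character. So the finitely generated structure of $\mathcal{G}^{0}$ that you hoped to exploit cannot exclude non-principal behavior: Proposition \ref{unitspace} as stated requires an additional chain condition on $\mathcal{G}^{0}$ (that the relevant descending families of range sets stabilize), and in general $\widehat{E\left(S_{\mathcal{G}}\right)}$ contains extra ultrafilter-type points beyond $\mathfrak{p}\cup\mathfrak{p}^{\infty}\cup\{\omega\}$. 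Note also that $\gamma_{n}:=e_{n}e_{n+1}\cdots$ converges pointwise to this $\chi$, so such points even survive the later reduction to $\overline{\mathfrak{p}^{\infty}}$ in Proposition \ref{closure}; the difficulty of Remark \ref{principal difficulty} is real and is not resolved by the bounded-length argument.
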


\begin{proof}
Let $\chi\in\widehat{E\left(  S_{\mathcal{G}}\right)  }$ and recall that
$E\left(  S_{\mathcal{G}}\right)  =\left\{  \left(  x,x\right)  :x\in\mathfrak
{p}\right\}  \cup\left\{  \omega\right\}  $. If $\chi\left(  \omega\right)
=1$, then since $\left(  x,x\right)  \omega=\omega$ for all $\left(
x,x\right)  \in E\left(  S_{\mathcal{G}}\right)  $, we see that $\chi\left(
x,x\right)  =1$ for all $(x,x)$. That is, $\chi$ is the constant non-zero
semicharacter on $E\left(  S_{\mathcal{G}}\right)  $. So the filter in
$E\left(  S_{\mathcal{G}}\right)  $ determined by $\chi$, $\mathcal{A}_{\chi}%
$, is simply $\tilde{\omega}$. Thus $\chi=\omega$. So we may suppose that
$\chi\neq\omega$. Then $\chi\left(  \omega\right)  =0$. Let $M:=\{|x|:(x,x)\in
\mathcal{A}_{\chi}\}$. The strategy here is the following. We will parametrize
the filter in $E\left(  S_{\mathcal{G}}\right)  $, $\mathcal{A}_{\chi
}=\left\{  \left(  x,x\right)  \in E\left(  S_{\mathcal{G}}\right)
:\chi\left(  x,x\right)  =1\right\}  $, (\cite[p.173-174]{APB}) by showing
that $\mathcal{A}_{\chi}=\mathcal{A}_{y}$, for an ultrapath $y\in\mathfrak{p}$
if $M$ is finite and by showing that $\mathcal{A}_{\chi}=\mathcal{A}_{\gamma}$
for a suitable $\gamma\in\mathfrak{p}^{\infty}$ if $M$ is infinite. Then we
identify $\chi$ either with $y$ or $\gamma$. (See Remark \ref{equality of
filters}.) To begin recall that the set, $M$, is defined by the equation,
$M=\{|x|:(x,x)\in\mathcal{A}_{\chi}\}$.  {}

\begin{enumerate}
\item [Case I]$M$ is finite. In this case there an ultrapath $y$ in
$\mathfrak{p}$ so that $\left(  y,y\right)  \in\mathcal{A}_{\chi}$ and so that
$\left|  y\right|  =\max M$. We show that $\mathcal{A}_{y}=\mathcal{A}_{\chi}%
$. For this end, take any $\left(  x,x\right)  \in$ $\mathcal{A}_{y}$. Then by
the definition of $\mathcal{A}_{y}$, the inequality, $\left(  x,x\right)
\left(  y,y\right)  \neq\omega$, holds, and either $|y|>|x|$, or if $|y|=|x|$,
then the inclusion, $r\left(  y\right)  \subseteq r\left(  x\right)  $, holds.
(See the paragraph following equations (\ref{ultras}) and (\ref{infinitep}).)
Hence by Remark \ref{order in E}, the inequality, $\left(  x,x\right)
\geq\left(  y,y\right)  $, holds. This means that, $\left(  x,x\right)
\left(  y,y\right)  =\left(  y,y\right)  $, which implies the equation
$\chi\left(  x,x\right)  \chi\left(  y,y\right)  =\chi\left(  y,y\right)  $.
(See previous paragraph to Remark \ref{order in E}) Since $\left(  y,y\right)
$ belongs to $\mathcal{A}_{\chi}$, we see that $\chi\left(  x,x\right)  =1$.
Hence, $\left(  x,x\right)  \in\mathcal{A}_{\chi}$, and $\mathcal{A}%
_{y}\subseteq\mathcal{A}_{\chi}$. On the other hand suppose that $\left(
x,x\right)  \in$ $\mathcal{A}_{\chi}$. Since $\chi\left(  \omega\right)  =0$,
it follows that the product, in $\mathcal{A}_{\chi}$, $\left(  x,x\right)
\left(  y,y\right)  $, is not $\omega$. (Recall that $\left(  y,y\right)
\in\mathcal{A}_{\chi}$) Moreover, the inequality, $\left|  y\right|
\geq\left|  x\right|  $, holds, since $\left|  y\right|  =\max M$. This yields
the equation, $y=x\cdot y^{\prime}$, for some $y^{\prime}\in\mathfrak{p}$. If
$\left|  y\right|  =\left|  x\right|  $, then $y^{\prime}\in\mathcal{G}^{0}$
and hence we must have $y=x_{y^{\prime}}$. But then $r\left(  y\right)
=r\left(  x_{y^{\prime}}\right)  =r\left(  x\right)  \cap y^{\prime}\subseteq
r\left(  x\right)  $. (See equation (\ref{specify}) in Notation
\ref{UltrapathAlgebra}.) Thus $\left(  x,x\right)  \in\mathcal{A}_{y}$ and
hence $\mathcal{A}_{\chi}\subseteq\mathcal{A}_{y}$. Thus $\mathcal{A}_{\chi
}=\mathcal{A}_{y}$. (Note that Remark \ref{equality of filters} shows that $y$
is uniquely determined by $\chi$.

\item[Case II] $M$ is infinite. We'll show that there is a path $\gamma
\in\mathfrak{p}^{\infty}$ such that $\mathcal{A}_{\chi}=\mathcal{A}_{\gamma}$.
Indeed, take a pair $\left(  x_{1},x_{1}\right)  \in\mathcal{A}_{\chi}$ such
that $\left|  x_{1}\right|  >0$. Since the set $M$ is countably infinite, we
may find another pair $\left(  x_{2},x_{2}\right)  \in\mathcal{A}_{\chi}$,
such that $\left|  x_{2}\right|  >\left|  x_{1}\right|  $. Moreover, since the
product $\left(  x_{1},x_{1}\right)  \left(  x_{2},x_{2}\right)  $ is not
$\omega$, we may write $x_{2}=x_{1}\cdot y_{2}$, for some $y_{2}\in
\mathfrak{p}$ such that $\left|  y_{2}\right|  >0$. Using the same argument
for the pair $\left(  x_{2},x_{2}\right)  $, we may find another pair,
$\left(  x_{3},x_{3}\right)  \in\mathcal{A}_{\chi}$, such that $x_{3}%
=x_{2}\cdot y_{3}$ for some $y_{3}\in\mathfrak{p}$ with $\left|  y_{3}\right|
>0$. Continuing this process inductively, we obtain a sequence of pairs
$\left\{  \left(  x_{i},x_{i}\right)  \right\}  _{i\geq1}$ in $\mathcal{A}%
_{\chi}$ so that each $x_{i}$ has positive length and $x_{i+1}=x_{i}\cdot
y_{i+1}$, where $y_{i}\in\mathfrak{p}$ with $\left|  y_{i}\right|  >0$, for
all $i$. So, if we set $x_{i}=\left(  \alpha_{i},A_{i}\right)  $ and
$y_{i}=\left(  \beta_{i},B_{i}\right)  $, we may use the relation,
$x_{i+1}=x_{i}\cdot y_{i+1}$, to define an infinite path $\gamma$ in
$\mathfrak{p}^{\infty}$ by the equation $\gamma:=\alpha_{1}\beta_{2}\beta
_{3}\ldots$. We show that $\mathcal{A}_{\chi}=\mathcal{A}_{\gamma}$. For this
end, take any $\left(  x,x\right)  $ in $\mathcal{A}_{\chi}$. Then the product
in $\mathcal{A}_{\chi}$, $\left(  x,x\right)  \left(  x_{i},x_{i}\right)  $,
is not $\omega$ for any $i$. Since the set, $\{\left|  x_{i}\right|
:i=1,\ldots\}$ is unbounded above, there is a positive integer $i_{0}$, such
that $\left|  x_{i_{0}}\right|  >\left|  x\right|  $. Then by definition, we
have $x_{i_{0}}=x\cdot x_{i_{0}}^{\prime}$ for some $x_{i_{0}}^{\prime}%
\in\mathfrak{p}$ with positive length. But by setting $x_{i_{0}}^{\prime
}=(\alpha_{i_{0}}^{\prime},A_{i_{0}}^{\prime})$, we have $\gamma=x_{i_{0}%
}\cdot\beta_{i_{0}+1}\ldots=x\cdot\alpha_{i_{0}}^{\prime}\beta_{i_{0}+1}%
\ldots$. (Note that $\gamma=x_{i}\cdot\beta_{i+1}\beta_{i+2}\ldots$, for each
$i\geq1$ by the paragraph before Definition \ref{inverse semigroup}.) Thus
$\left(  x,x\right)  \in$ $\mathcal{A}_{\gamma}$ and hence $\mathcal{A}_{\chi
}\subseteq\mathcal{A}_{\gamma}$. For the reverse inclusion, let $\left(
x,x\right)  \in$ $\mathcal{A}_{\gamma}$. Then $\gamma=x\cdot\gamma^{\prime}$
for some infinite path $\gamma^{\prime}$ in $\mathfrak{p}^{\infty}$. So in
this situation we always may choose a path $x_{i_{0}}$ from the sequence
$\left\{  \left(  x_{i},x_{i}\right)  \right\}  _{i\geq1}$, so that the
product in $E\left(  S_{\mathcal{G}}\right)  $, $\left(  x,x\right)  \left(
x_{i_{0}},x_{i_{0}}\right)  $, is not $\omega$ and $\left|  x_{i_{0}}\right|
>\left|  x\right|  $. Then since $\gamma:=\alpha_{1}\beta_{2}\beta_{3}%
\ldots=x_{i}\cdot\beta_{i+1}\ldots$ for each $i\geq1$, Remark \ref{order in E}
shows that the inequality, $\left(  x,x\right)  >\left(  x_{i_{0}},x_{i_{0}%
}\right)  $. This, in turn, yields the equation, $\left(  x,x\right)  \left(
x_{i_{0}},x_{i_{0}}\right)  =\left(  x_{i_{0}},x_{i_{0}}\right)  $. It follows
that $\chi\left(  x,x\right)  \chi\left(  x_{i_{0}},x_{i_{0}}\right)
=\chi\left(  x_{i_{0}},x_{i_{0}}\right)  $, and since $\chi\left(  x_{i_{0}%
},x_{i_{0}}\right)  =1$, it follows that $\chi\left(  x,x\right)  =1$. Thus
$\left(  x,x\right)  \in\mathcal{A}_{\chi}$, showing that $\mathcal{A}%
_{\gamma}\subseteq\mathcal{A}_{\chi}$. Hence $\mathcal{A}_{\chi}%
=\mathcal{A}_{\gamma}$. Again, we may appeal to Remark \ref{equality of
filters} to guarantee that the infinite path $\gamma$ is uniquely determined
by $\chi$.
\end{enumerate}
\end{proof}

Recall that the topology on $\widehat{E\left(  S_{\mathcal{G}}\right)  }$ is
the topology of pointwise convergence on $E\left(  S_{\mathcal{G}}\right)  $,
and so the family, $\{D_{\left(  x,x\right)  }:\left(
x,x\right)  \in E\left(  S_{\mathcal{G}}\right)  \}$, of compact open sets forms a subbasis for the
topology. In this setting, the subbasic set, $D_{\left(  x,x\right)  }$, is
given by the equation, $D_{\left(  x,x\right)  }=\{y\in\mathfrak{p}:\left(
x,x\right)  \left(  y,y\right)  \neq\omega$; $|y|>|x|$ or, if $|y|=|x|$, then
$r\left(  y\right)  \subseteq r\left(  x\right)  \}\cup\{\gamma\in\mathfrak
{p}^{\infty}:\gamma=x\cdot\gamma^{\prime}$, $\gamma^{\prime}\in\mathfrak
{p}^{\infty},s\left(  \gamma^{\prime}\right)  \in r\left(  x\right)  \}$. (See
\cite[Chap.4, p.174]{APB} and equations (\ref{ultras}) and (\ref{infinitep}).)
We would like a more concrete description of the topology on $\widehat
{E\left(  S_{\mathcal{G}}\right)  }$. For this purpose, it is convenient to
introduce the following notation and definition.

\begin{notation}
Let $F$ be a finite subset of $E\left(  S_{\mathcal{G}}\right)  $. Let
$D_{\left(  x,x\right)  ;F}:=D_{\left(  x,x\right)  }\cap\bigcap
\limits_{\left(  z,z\right)  \in F}D_{\left(  z,z\right)  }^{c}$. Since we are
only interested in non-empty basic sets, we may suppose that $\left(
z,z\right)  <\left(  x,x\right)  $, for all $\left(  z,z\right)  \in F$. See
\cite[p.174]{APB}.
\end{notation}

\begin{definition}
Let $\mathcal{G}=\left(  G^{0},\mathcal{G}^{1},r,s\right)  $ be an ultragraph
and let $A$ be a subset of $G^{0}$. We say that the edge $e\in\mathcal{G}^{1}$
is emitted by $A$ whenever $s\left(  e\right)  \in A$.
\end{definition}

\begin{lemma}
\label{topology} Given an ultrapath $y\in D_{\left(  x,x\right)  ,F}$ there is
a finite set $K$ of edges emitted by the range of $y$, $r\left(  y\right)  $,
and a finite subcollection $Q$ of $\mathcal{G}^{0}$, such that no set in $Q$
contains  $r\left(  y\right)  $ and such that $y\in
D_{\left(  y,y\right)  ;K,Q}\subset D_{\left(  x,x\right)  ,F}$ where,
\[
D_{\left(  y,y\right)  ;K,Q}:=D_{\left(  y,y\right)  }\cap\bigcap
\limits_{e{\in}K}D_{\left(  y\cdot e,y\cdot e\right)  }^{c}\cap
\bigcap\limits_{C{\in}Q}D_{\left(  y_{C},y_{C}\right)  }^{c}\text{.}%
\]
\end{lemma}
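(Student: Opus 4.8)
The plan is to exploit two structural facts about the subbasic sets $D_{(x,x)}$ in the unit space $H_{\mathcal{G}}^{(0)}=\widehat{E(S_{\mathcal{G}})}=\mathfrak{p}\cup\mathfrak{p}^{\infty}\cup\{\omega\}$ (Proposition \ref{unitspace}). Since each point is a semicharacter and the sets $D_{(x,x)}$ are defined by evaluation, one has $D_{e}\cap D_{f}=D_{ef}$ and $D_{e}\subseteq D_{f}$ whenever $e\le f$ in $E(S_{\mathcal{G}})$; and by Remark \ref{order in E} the order on $E(S_{\mathcal{G}})$ is governed by path length and inclusion of ranges. Combining these, the lemma reduces to a finite case analysis: for the fixed ultrapath $y\in D_{(x,x),F}$ I will produce, for each $(z,z)\in F$, a single complementary factor — either $D_{(y\cdot e,y\cdot e)}^{c}$ with $e$ an edge emitted by $r(y)$, or $D_{(y_{C},y_{C})}^{c}$ with $C\in\mathcal{G}^{0}$ not containing $r(y)$ — that already separates $D_{(z,z)}$ from $y$.

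First I would record that $y\in D_{(y,y);K,Q}$ holds automatically for whatever finite $K,Q$ I choose, provided $K$ consists of edges and every $C\in Q$ fails to contain $r(y)$. Indeed $y\in D_{(y,y)}$ trivially; $y\notin D_{(y\cdot e,y\cdot e)}$ because $|y\cdot e|=|y|+1>|y|$; and, using equation (\ref{specify}), $y\in D_{(y_{C},y_{C})}$ would force $r(y)\subseteq r(y_{C})=r(y)\cap C$, i.e. $r(y)\subseteq C$, which is excluded. I would also note the easy half of the containment: since $y\in D_{(x,x)}$ means $(y,y)\le(x,x)$, we get $D_{(y,y)}\subseteq D_{(x,x)}$, so it only remains to arrange $D_{(y,y);K,Q}\cap D_{(z,z)}=\emptyset$ for every $(z,z)\in F$.

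The core step is the case analysis for a fixed $(z,z)\in F$, where $(z,z)<(x,x)$ and $y\notin D_{(z,z)}$, i.e. $(y,y)\not\le(z,z)$. Using $D_{(y,y)}\cap D_{(z,z)}=D_{(y,y)(z,z)}$ together with Remark \ref{order in E} I would split into three cases: (1) $z$ and $y$ incomparable, so $(y,y)(z,z)=\omega$ and $D_{(y,y)}\cap D_{(z,z)}=\{\omega\}$; (2) $z$ strictly extends $y$, say $z=y\cdot z'$ with $|z'|\ge 1$, whose first edge $e$ satisfies $s(e)\in r(y)$ and $(z,z)\le(y\cdot e,y\cdot e)$, so $D_{(z,z)}\subseteq D_{(y\cdot e,y\cdot e)}$; and (3) $z$ and $y$ have the same underlying edge path with $r(y)\not\subseteq r(z)$, where the meet computes to $(y,y)(z,z)=(y_{r(z)},y_{r(z)})$. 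The remaining algebraic possibility, that $y$ strictly extends or refines $z$, is ruled out because it would give $(y,y)\le(z,z)$, contradicting $y\notin D_{(z,z)}$. In case (2) I put $e$ into $K$; in case (3) I put $C=r(z)$ into $Q$, where $r(y)\not\subseteq r(z)$ keeps $Q$ admissible; each choice makes the corresponding complementary factor disjoint from $D_{(z,z)}$. Running over all of $F$ produces the required finite $K$ and $Q$.

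The one point needing care — and the likely main obstacle — is the constant semicharacter $\omega$, which lies in every $D_{e}$ and hence in $D_{(y,y)}$ and in each $D_{(z,z)}$; since $F\neq\emptyset$ forces $\omega\notin D_{(x,x),F}$, I must ensure $\omega$ is excluded from $D_{(y,y);K,Q}$. This is automatic once $K\cup Q\neq\emptyset$, because $\omega\in D_{(y\cdot e,y\cdot e)}$ and $\omega\in D_{(y_{C},y_{C})}$; so whenever some $(z,z)\in F$ falls under case (2) or (3), the point $\omega$ is already removed. If every element of $F$ is incomparable with $y$ (case (1) only), I would append a single edge $e_{0}$ emitted by $r(y)$ to $K$ — such an edge exists by the standing no-sinks Blanket Assumption — which both excludes $\omega$ and, since $D_{(y,y)}\cap D_{(z,z)}=\{\omega\}$ in that case, completes the separation. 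Verifying the identities $D_{e}\cap D_{f}=D_{ef}$ and the meet computation in case (3) against the multiplication rules of Definition \ref{inverse semigroup} and equation (\ref{specify}) is the routine but essential bookkeeping underlying the whole argument.
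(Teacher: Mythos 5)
Your proof is correct, and its skeleton is the same as the paper's: for each $(z,z)\in F$ you run essentially the paper's trichotomy (incomparable with $y$; $z=y\cdot z'$ with $|z'|\geq 1$, contributing the initial edge of $z'$ to $K$; equal length with $r(y)\not\subseteq r(z)$, contributing a set to $Q$), and your choice $C=r(z)$ is equivalent to the paper's choice $C=z'$, since $y_{r(z)}=y_{z'}=z$. Where you genuinely diverge is at two points where the paper is loose, and in both your version is the more careful one. First, the paper tacitly works with its concrete description of $D_{(x,x)}$ as a set of finite or infinite paths, i.e.\ after discarding the constant semicharacter $\omega$ (cf.\ Remark \ref{isolated}); read literally in $\widehat{E\left(S_{\mathcal{G}}\right)}$, the point $\omega$ lies in every $D_{(z,z)}$, so the paper's assertion in the incomparable case that $D_{(y,y)}\subseteq D_{(x,x)}\cap D_{(z,z)}^{c}$ holds only modulo $\{\omega\}$. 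Your explicit exclusion of $\omega$ --- in particular the appended edge $e_{0}$, furnished by the no-sinks Blanket Assumption, in the degenerate case where every member of $F$ is incomparable with $y$ --- closes this; note the paper never addresses that degenerate case at all, since with $\omega$ discarded it could take $K=Q=\emptyset$ there. Second, the paper's Case II ($z\in\mathcal{G}^{0}$) asserts $D_{(y,y)}\subseteq D_{(x,x)}\cap D_{(z,z)}^{c}$ outright; that is fine when $|y|\geq 1$ (then $s(y)\notin z$ and every point of $D_{(y,y)}$ other than $\omega$ has source $s(y)$), but it fails when $y\in\mathcal{G}^{0}$ with $y\cap z\neq\emptyset$ and $y\not\subseteq z$, since paths starting at a vertex of $y\cap z$ lie in both $D_{(y,y)}$ and $D_{(z,z)}$. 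Your equal-length case with $C=r(z)=z$ (so that $y_{C}=y\cap z$ and the factor $D_{(y_{C},y_{C})}^{c}$ removes precisely $D_{(y,y)}\cap D_{(z,z)}$) handles exactly this situation, so your argument in fact repairs a small gap in the paper's proof at no extra cost.
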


\begin{proof}
Recall that, the open subset of $\widehat{E\left(  S_{\mathcal{G}}\right)  }$,
$D_{\left(  x,x\right)  }$, is given by the equation, $D_{\left(  x,x\right)
}=\{ y\in\mathfrak{p}:\left(  x,x\right)  \left(  y,y\right)  \neq\omega$;
$|y|>|x|$ or, if $|y|=|x|$, then $r\left(  y\right)  \subseteq r\left(
x\right)  \}\cup\{\gamma\in\mathfrak{p}^{\infty}:\gamma=x\cdot\gamma^{\prime}%
$, $\gamma^{\prime}\in\mathfrak{p}^{\infty},s\left(  \gamma^{\prime}\right)
\in r\left(  x\right)  \}$. We leave to the reader to check that the
ultrapath, $y$, lies in $D_{\left(  y,y\right)  ;K,Q}$. Suppose $y\in
D_{\left(  x,x\right)  ,F}$ and fix any $(z,z)\in F$, such that $\left(
z,z\right)  <\left(  x,x\right)  $. Then since $y\in D_{\left(  x,x\right)
,F}$, we have, $\left(  x,x\right)  \left(  y,y\right)  \neq\omega$, and so
either $|y|>|x|$ or, if $|y|=|x|$, then $r\left(  y\right)  \subseteq r\left(
x\right)  $ and $y\notin D_{\left(  z,z\right)  }$. Also since $\left(
z,z\right)  <\left(  x,x\right)  $, we have $|z|>|x|$ or, if $|z|=|x|$, then
$r\left(  z\right)  \subsetneq r\left(  x\right)  $. (See Remark \ref{order in
E}.) We have the following cases:

\begin{enumerate}
\item [Case I]$|z|\geq1$. Since $y\notin D_{\left(  z,z\right)  }$, if the
equation, $\left(  y,y\right)  \left(  z,z\right)  =\omega$, holds, we have
$D_{\left(  y,y\right)  }\subseteq D_{\left(  x,x\right)  }\cap D_{\left(
z,z\right)  }^{c}$; otherwise, either the inequality, $|y|<|z|\,$, holds or, if
$|y|=|z|$, then the range of $z$, $r\left(  z\right)  $, does not contain the
range of $y$, $r\left(  y\right)  $. In this situation, we have $z=y\cdot
z^{\prime}$, for some $z^{\prime}\in\mathfrak{p}$. So we have to consider two
subcases in this first case.

\item[I(1)] $|z^{\prime}|\geq1$. Let $e^{\prime}$ be the initial edge in
$\mathcal{G}^{1}$ of $z^{\prime}$. So $s\left(  e^{\prime}\right)  \in
r\left(  y\right)  $, and we see that $D_{\left(  y,y\right)  }\cap D_{(y\cdot
e^{\prime},y\cdot e^{\prime})}^{c}\subseteq D_{\left(  x,x\right)  }\cap
D_{\left(  z,z\right)  }^{c}$.

\item[I(2)] $z^{\prime}\in\mathcal{G}^{0}$. In this case, the equality,
$|z|=|y|$, holds. So since $y\notin D_{\left(  z,z\right)  }$, the set
$r\left(  z\right)  =r\left(  y\right)  \cap z^{\prime}$, does not contain the
range of $y$, $r\left(  y\right)  $. This implies that $z^{\prime}$ can not
contain the range of $y$, $r\left(  y\right)  $. Then we see that $D_{\left(
y,y\right)  }\cap D_{(y_{z^{\prime}},y_{z^{\prime}})}^{c}\subseteq D_{\left(
x,x\right)  }\cap D_{\left(  z,z\right)  }^{c}$.

\item[Case II] $z\in\mathcal{G}^{0}$. In this case, $x\in\mathcal{G}^{0}$ and
$z\subsetneq x$. But since $y\notin D_{\left(  z,z\right)  }$, $s\left(
y\right)  \nsubseteq z$. Then we have, $D_{\left(  y,y\right)  }\subseteq
D_{\left(  x,x\right)  }\cap D_{\left(  z,z\right)  }^{c}$.
\end{enumerate}

In any of the above cases, we may take the set $K$ to be the union when
$\left(  z,z\right)  $ runs over the set $F$ of the sets $\{ e_{z}^{\prime}%
\in\mathcal{G}^{1}$: $e_{z}^{\prime}$ is the initial edge of $z^{\prime}$,
$z=y\cdot z^{\prime}\}$; while for the set $Q$, we may take the set, $\{
z^{\prime}\in\mathcal{G}^{0}$: $z=y_{z^{\prime}}$, $\left(  z,z\right)  \in
F\}$. Thus $y\in D_{\left(  y,y\right)  ;K,Q}\subset D_{\left(  x,x\right)
,F}$.
\end{proof}

The following lemma describes the topology on the set of infinite paths
$\mathfrak{p}^{\infty}$.

\begin{lemma}
\label{topoinf}A neighborhood basis for $\gamma\in\mathfrak{p}^{\infty}$ is
given by the sets of the form $D_{\left(  y,y\right)  }$, where $y=\left(
\beta,B\right)  $ and $\beta$ is an initial segment \ of $\gamma$.
\end{lemma}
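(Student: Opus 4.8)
The plan is to verify directly the two defining properties of a neighborhood basis at $\gamma$. First, each set $D_{(y,y)}$ with $y=(\beta,B)$, $\beta$ an initial segment of $\gamma$, and $s(e_{|\beta|+1})\in B$ (so that $\gamma=y\cdot\gamma'$, i.e. $\gamma\in D_{(y,y)}$) is by construction a subbasic open set containing $\gamma$; this direction is immediate from the explicit description of $D_{(x,x)}$ recalled just before Lemma \ref{topology}. The real content is the converse: every open neighborhood $U$ of $\gamma$ contains one of these cylinders. Since the sets $D_{(x,x);F}$ form a basis, it suffices, given a basic neighborhood $\gamma\in D_{(x,x);F}\subseteq U$ (with $(z,z)<(x,x)$ for all $(z,z)\in F$), to produce an initial segment $\beta$ of $\gamma$ and a set $B$ with $\gamma\in D_{(y,y)}\subseteq D_{(x,x);F}$, where $y=(\beta,B)$.

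The candidate is obtained by writing $\gamma=e_1e_2\cdots$, choosing an integer $n>\max\left(\{|z|:(z,z)\in F\}\cup\{|x|,1\}\right)$, and setting $\beta:=e_1\cdots e_n$, $B:=r(\beta)$, $y:=(\beta,B)$. Then $\gamma\in D_{(y,y)}$ because $s(e_{n+1})\in r(e_n)=B$. Since $\gamma\in D_{(x,x)}$ forces $x$ to be an initial segment of $\gamma$, and $\beta$ is the length-$n$ prefix of $\gamma$ with $n>|x|$, one checks that $x$ is an initial segment of $y$, so the product $(x,x)(y,y)$ is not $\omega$; as $|y|=n>|x|$, Remark \ref{order in E} gives $(y,y)\leq(x,x)$ and hence $D_{(y,y)}\subseteq D_{(x,x)}$.

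The main obstacle, and indeed the whole point of the lemma, is showing that the complementary conditions $D_{(z,z)}^{c}$ are automatically absorbed once $n$ is large, so that no $w\in D_{(y,y)}$ can lie in any $D_{(z,z)}$. Here I would argue as follows: any $w\in D_{(y,y)}$, finite or infinite, has $\beta=e_1\cdots e_n$ as a prefix of its underlying path (this is exactly the meaning of $D_{(y,y)}$ together with $|y|=n$). Fix $(z,z)\in F$ and put $m:=|z|<n$. If $z$ has positive length, write $z=(\zeta,Z)$: if $\zeta\neq e_1\cdots e_m$ then $\zeta$ is not a prefix of $w$'s path, so $w\notin D_{(z,z)}$; if $\zeta=e_1\cdots e_m$, then since $\gamma\notin D_{(z,z)}$ we must have $s(e_{m+1})\notin Z$, while $w=z\cdot w''$ would force $s(e_{m+1})\in Z$, so again $w\notin D_{(z,z)}$. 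If instead $z\in\mathcal{G}^0$, then by Remark \ref{order in E} necessarily $x\in\mathcal{G}^0$ with $Z\subsetneq x$, and $\gamma\notin D_{(z,z)}$ gives $s(e_1)\notin Z$, which likewise excludes every such $w$ from $D_{(z,z)}$.

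In every case $D_{(y,y)}\subseteq D_{(z,z)}^{c}$, whence $D_{(y,y)}\subseteq D_{(x,x);F}\subseteq U$, completing the argument. The crux is that the failure of $z$ to be an initial segment of the fixed infinite path $\gamma$ is a \emph{finitely observable} event, already witnessed within the prefix $\beta$; this is precisely what lets us dispense with the complement sets that were unavoidable for finite ultrapaths in Lemma \ref{topology}.
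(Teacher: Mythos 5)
Your proof is correct and follows essentially the same route as the paper's: given a basic neighborhood $D_{(x,x);F}$ of $\gamma$, take an initial segment of $\gamma$ strictly longer than every $z$ with $(z,z)\in F$, and check that the resulting cylinder $D_{(y,y)}$ lies inside $D_{(x,x)}$ and misses every $D_{(z,z)}$. The only difference is presentational: the paper writes $y=x\cdot y'$ with $|y'|>|z'|$ (where $z=x\cdot z'$) and simply asserts $D_{(y,y)}\subset D_{(x,x)}\cap D_{(z,z)}^{c}$, whereas you verify the exclusion of each $D_{(z,z)}$ by the case analysis (path mismatch, next-edge source outside $r(z)$, and the $z\in\mathcal{G}^{0}$ case) that the paper leaves to the reader.
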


\begin{proof}
Let $\gamma\in\mathfrak{p}^{\infty}$ and $\gamma\in D_{\left(  x,x\right)
;F}:=D_{\left(  x,x\right)  }\cap\bigcap\limits_{\left(  z,z\right)  \in
F}D_{\left(  z,z\right)  }^{c}$. Since $\gamma\in D_{(x,x)}$, $\gamma
=x\cdot\gamma^{\prime}$, where $\gamma^{\prime}\in\mathfrak{p}^{\infty}$ and
$s\left(  \gamma^{\prime}\right)  \in r\left(  x\right)  $. Also since we are
interested in nonempty basis elements, we may assume that the inequality,
$\left(  x,x\right)  >\left(  z,z\right)  $, holds. So we have $z=x\cdot
z^{\prime}$, $z^{\prime}\in\mathfrak{p}$. Since $\gamma^{\prime}$ is an
infinite path and $\gamma\notin D_{\left(  z,z\right)  }$, we may choose an
initial segment with positive length, $y^{\prime}$, of $\gamma^{\prime}$ so
that $\left\vert y^{\prime}\right\vert >\left\vert z^{\prime}\right\vert $.
Set $y:=x\cdot y^{\prime}$ and notice then $\gamma\in D_{\left(  y,y\right)
}\subset D_{\left(  x,x\right)  }\cap D_{\left(  z,z\right)  }^{c}$.
\end{proof}

The universal groupoid $H_{\mathcal{G}}$ for $S_{\mathcal{G}}$ has an explicit
parametrization given by the following theorem.

\begin{theorem}
\label{univ.groupoid} The universal groupoid $H_{\mathcal{G}}$ for
$S_{\mathcal{G}}$ can be identified with the union of $\left\{  \omega
\right\}  $ and the set of all triples of the form $\left(  x\cdot\mu,\left|
x\right|  -\left|  y\right|  ,y\cdot\mu\right)  $ where $x$, $y\in\mathfrak
{p}$, $r\left(  x\right)  =r\left(  y\right)  $, $\mu\in\mathfrak{p}%
\cup\mathfrak
{p}^{\infty}$, and $x\cdot\mu$, $y\cdot\mu\in\mathfrak{p}\cup\mathfrak
{p}^{\infty}$. Multiplication on $H_{\mathcal{G}}$ is given by the formula:%
\[
\left(  x\cdot\mu,\left|  x\right|  -\left|  y\right|  ,y\cdot\mu\right)
\left(  y\cdot\mu,\left|  y\right|  -\left|  y^{\prime}\right|  ,y^{\prime
}\cdot\mu^{\prime}\right)  :=\left(  x\cdot\mu,\left|  x\right|  -\left|
y^{\prime}\right|  ,y^{\prime}\cdot\mu^{\prime}\right)  \text{,}%
\]
and inversion is given by the formula,
\[
\left(  x\cdot\mu,\left|  x\right|  -\left|  y\right|  ,y\cdot\mu\right)
^{-1}:=\left(  y\cdot\mu,\left|  y\right|  -\left|  x\right|  ,x\cdot
\mu\right)  \text{.}%
\]
The canonical map $\Lambda:S_{\mathcal{G}}\longrightarrow H^{a}$, sends
$\omega$ to $\{\omega\}$ and any element $\left(  x,y\right)  \in
S_{\mathcal{G}}$ to the (compact open) set $\mathcal{A}\left(  x,y\right)  $,
where
\[
\mathcal{A}\left(  x,y\right)  :=\left\{  \left(  x\cdot\mu,|x|-|y|,y\cdot
\mu\right)  :\mu\in\mathfrak{p}\cup\mathfrak{p}^{\infty}\right\}  \text{.}%
\]
Further $\mathcal{A}\left(  x,x\right)  =D_{\left(  x,x\right)  }$, for each
$x\in\mathfrak{p}$. The locally compact groupoid $H_{\mathcal{G}}$ is Hausdorff.
\end{theorem}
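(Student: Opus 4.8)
The plan is to invoke Paterson's general construction of the universal groupoid (Theorem 4.3.1 of \cite{APB}) and then translate each ingredient through the concrete parametrization of the unit space $H_{\mathcal{G}}^{(0)}=\widehat{E(S_{\mathcal{G}})}$ that we obtained in Proposition \ref{unitspace}, namely $\mathfrak{p}\cup\mathfrak{p}^{\infty}\cup\{\omega\}$. Recall that the universal groupoid is the quotient $\{(\chi,s):\chi\in D_s,\,s\in S_{\mathcal{G}}\}/\!\sim$, where $(\chi,s)\sim(\chi',t)$ iff $\chi=\chi'$ and $s,t$ have the same germ at $\chi$. The strategy is therefore to show that each equivalence class $\overline{(\chi,s)}$ with $\chi\neq\omega$ admits a unique normal form $(x\cdot\mu,\,|x|-|y|,\,y\cdot\mu)$, and that under this normal form the abstract composition and inversion maps of Theorem 4.3.1 become exactly the displayed formulas.

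First I would unpack the germ. An arbitrary nonzero element of $S_{\mathcal{G}}$ is $s=(x,y)$ with $r(x)=r(y)$, and $\chi\in D_s$ means $\chi(ss^{*})=\chi(x,x)=1$; by Proposition \ref{unitspace} such a $\chi$ is identified with an element $\nu\in\mathfrak{p}\cup\mathfrak{p}^{\infty}$ lying in $D_{(x,x)}$. By Lemma \ref{topology} and Lemma \ref{topoinf}, membership in $D_{(x,x)}$ forces $\nu$ to have $x$ as an initial segment, so $\nu=x\cdot\mu$ for a unique $\mu\in\mathfrak{p}\cup\mathfrak{p}^{\infty}$ with $s(\mu)\in r(x)$ (here uniqueness of the factorization follows from Remark \ref{equality of filters}). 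Applying the right action, $(\chi\cdot s)(e)=\chi(ses^{*})$, and reading it off through the product rules 2 and 3 of Definition \ref{inverse semigroup}, one computes that $\chi\cdot(x,y)$ is identified with $y\cdot\mu$. This is precisely what lets me attach to the pair $(\chi,(x,y))$ the triple $(x\cdot\mu,\,|x|-|y|,\,y\cdot\mu)$, and the Hausdorff, totally disconnected structure of $\widehat{E(S_{\mathcal{G}})}$ guarantees that germs are detected on the basic sets $D_{(z,z)}$, so $(x,y)$ and $(x',y')$ have the same germ at $x\cdot\mu$ iff they produce the same triple. The integer middle coordinate $|x|-|y|$ is the natural cocycle (degree) that records the shift, and it is invariant under the equivalence because rules 2 and 3 only ever modify $x$ and $y$ by a common factor.

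Next I would verify the algebraic formulas. For multiplication, the composable pairs in Theorem 4.3.1 are $(\overline{(\chi,s)},\overline{(\chi\cdot s,t)})$ mapping to $\overline{(\chi,st)}$; substituting $s=(x,y)$, $t=(y',y'')$ with $\chi\cdot s$ identified with $y\cdot\mu=y'\cdot\mu$, the product $st$ in $S_{\mathcal{G}}$ collapses via rules 2 and 3 to give the triple $(x\cdot\mu,\,|x|-|y''|,\,y''\cdot\mu)$, which matches the displayed formula once one checks the cocycle adds, $(|x|-|y|)+(|y|-|y'|)=|x|-|y'|$. Inversion comes directly from the map $\overline{(\chi,s)}\mapsto\overline{(\chi\cdot s,s^{*})}$ together with the involution $(x,y)^{*}=(y,x)$. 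The identifications $\mathcal{A}(x,y)=\Lambda(x,y)$ and $\mathcal{A}(x,x)=D_{(x,x)}$ then fall out of the definition of the basic sets $D(U,s)$ of the germ topology with $U=D_{(x,x)}$.

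**The main obstacle** I expect is not the formal bookkeeping but the Hausdorffness claim at the end. General universal groupoids of inverse semigroups need \emph{not} be Hausdorff; Paterson's criterion (see \cite[Chap.~4]{APB}) is that $H$ is Hausdorff iff for each $s\in S_{\mathcal{G}}$ the set $\{e\in E(S_{\mathcal{G}}):e\leq s,\ e\leq s^{*}\}$ — equivalently the fixed-point set $\{\chi:\chi\cdot s=\chi\}$ — is open (its characteristic behavior is locally constant) in $D_s$. So the real work is to show that the \emph{germ} topology separates distinct triples sharing a unit. Concretely, given two triples with the same range and source units but different middle coordinates or different $(x,y)$-data, I must produce disjoint basic neighborhoods; the danger is a sequence $\chi_n\to\chi$ on which $s$ and $t$ agree for each $n$ but disagree in the limit. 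I would rule this out using the explicit neighborhood bases of Lemmas \ref{topology} and \ref{topoinf}: because each $D_{(y,y);K,Q}$ is \emph{compact} open and the comparability of ultrapaths (from Remark \ref{equality of filters}) is decided by finite path-length and finite set-inclusion data, the locus where two elements of $S_{\mathcal{G}}$ have equal germ is cut out by finitely many conditions of the form ``$\nu$ has $z$ as an initial segment,'' each of which defines an open-and-closed set. Assembling these finitely many clopen constraints shows the relevant fixed-point sets are open, which yields Hausdorffness; this finiteness — traceable to the lattice $\mathcal{G}^{0}$ and the fact that $r(y)$ can be separated from finitely many non-containing sets — is exactly the place where the ultragraph structure, and Remark \ref{principal difficulty}, must be handled with care.
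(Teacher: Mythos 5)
Your proposal is correct and follows essentially the same route as the paper: both invoke Paterson's Theorem 4.3.1, compute the germ of $(x,y)$ at a point $x\cdot\mu$ of $D_{(x,x)}$ to obtain the normal form $\left(x\cdot\mu,|x|-|y|,y\cdot\mu\right)$, read off multiplication and inversion from the abstract composition maps, and prove Hausdorffness by exploiting that the sets $\mathcal{A}(x,y)$ are compact open, so that $\mathcal{A}(x,y)\cap\mathcal{A}^{c}(x',y')$ is an open separating neighborhood --- which is exactly the paper's argument, organized in your case around the equal-germ-locus criterion. One minor slip: that criterion requires the equal-germ locus to be \emph{closed} (it is automatically open), but your subsequent analysis, which cuts it out by clopen comparability conditions, supplies precisely that, so the substance is right.
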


\begin{proof}
The proof is close to that for the Cuntz semigroup $S_{n}$ in \cite[p.182-186]%
{APB}. Also see \cite[p.9-10]{APP}. We have to compute the equivalence classes
$\overline{\left(  \chi,s\right)  }$, $\chi\in D_{s}$. We suppose first
$\chi=u$, is an ultrapath and then the computation of the equivalence class
$\overline{\left(  \chi,s\right)  }$, when $\chi$ is an infinite path is
similar. So let $\chi=u$, and let $s=\left(  x,y\right)  $ and $t=\left(
z,w\right)  $ be elements in $S_{\mathcal{G}}$ such that $\chi\in D_{\left(
x,x\right)  \left(  z,z\right)  }$. Then $u=x\cdot u^{\prime}$ and $u=z\cdot
u^{\prime\prime}$. If we let $u=(u_{1}\cdots u_{|u|},U_{\left|  u\right|  })$,
then without lost of generality, we may assume that $x=u_{1}\cdots u_{m}$, and
$z=u_{1}\cdots u_{r}$, where $m\leq r\leq|u|$. Thus $s=\left(  u_{1}\cdots
u_{m},y\right)  $ and $t=\left(  u_{1}\cdots u_{r},w\right)  $. Let $e\in
E\left(  S_{\mathcal{G}}\right)  $ be such that $\chi=u\in D_{e}$,
$e\leq\left(  ss^{\ast}\right)  \left(  tt^{\ast}\right)  =\left(  x,x\right)
\left(  z,z\right)  =\left(  z,z\right)  $ and $es=et$. Then $e=\left(
f,f\right)  $, where $f$ is such that $u=f\cdot u^{\prime\prime\prime}$,
$u^{\prime\prime\prime}\in\mathfrak{p}$ and $f=z\cdot f^{\prime\prime}$,
$f^{\prime\prime}\in\mathfrak{p}$. So we can write $e=\left(  f,f\right)  $
$=(u_{1}\cdots u_{r^{\prime}},u_{1}\cdots u_{r^{\prime}})$, where $m\leq r\leq
r^{\prime}\leq|u|$. Then we have
\begin{align*}
&  (\left(  u_{1}\cdots u_{r^{\prime}},y\cdot u_{m+1}\cdots u_{r^{\prime}%
}\right) \\
&  =\left(  f,y\cdot f^{\prime}\right)  =es=et\\
&  =\left(  f,w\cdot f^{\prime\prime}\right)  =(\left(  u_{1}\cdots
u_{r^{\prime}},w\cdot u_{r+1}\cdots u_{r^{\prime}}\right)  \text{.}%
\end{align*}
This means that $w=y\cdot u_{1}\cdots u_{r^{\prime}}$. To link our groupoid
with Renault's model for the Cuntz groupoid $G_{n}$, described in
\cite[Section 4.2, Example 3]{APB}, we associate the pair
\[
\left(  \chi,s\right)  =((u_{1}\cdots u_{m}u_{m+1}\cdots u_{|u|},U_{\left|
u\right|  }),(u_{1}\cdots u_{m},y))\text{,}%
\]
with the triple
\[
((u_{1}\cdots u_{m}u_{m+1}\cdots u_{|u|},U_{\left|  u\right|  }),m-|y|,y\cdot
(u_{m+1}\cdots u_{|u|},U_{\left|  u\right|  }))\text{.}%
\]
The argument is reversible and shows that this map is a bijection.

We now prove that $H_{\mathcal{G}}$ is Hausdorff, leaving the remaining
verifications of the theorem to the reader. Let $a=\left(  x,|x|-|y|,y\right)
$, $b=\left(  x^{\prime},|x^{\prime}|-|y^{\prime}|,y^{\prime}\right)  $ belong
to $H_{\mathcal{G}}$ with $a\neq b$. If $\mathcal{A}\left(  x,y\right)
\cap\mathcal{A}\left(  x^{\prime},y^{\prime}\right)  =\emptyset$, then we can
separate $a$ and $b$ using $\mathcal{A}\left(  x,y\right)  $ and
$\mathcal{A}\left(  x^{\prime},y^{\prime}\right)  $. Suppose that
$\mathcal{A}\left(  x,y\right)  \cap\mathcal{A}\left(  x^{\prime},y^{\prime
}\right)  \neq\emptyset$. Then there exist $\chi$, $\chi^{\prime}$ such that
\[
\left(  x\cdot\chi,|x|-|y|,y\cdot\chi\right)  =\left(  x^{\prime}\cdot
\chi^{\prime},|x^{\prime}|-|y^{\prime}|,y^{\prime}\cdot\chi^{\prime}\right)
\text{.}%
\]
Then the equation, $|x|-|y|=|x^{\prime}|-|y^{\prime}|$, holds. Furthermore the
ultrapaths $x$, $x^{\prime}$ and $y$, $y^{\prime}$ are comparable. We can
suppose that for some $u\in\mathfrak{p}$, $x^{\prime}=x\cdot u$. Then
$y^{\prime}=y\cdot w$, $w\in\mathfrak{p}$, where $|u|=|w|$, and since $u$ and
$w$ are initial segments of $\chi$, $u=w$. Then $x^{\prime}=x\cdot u$ and
$y^{\prime}=y\cdot u$. If $|u|>0$ then $\mathcal{A}\left(  x,y\right)
\cap\mathcal{A}^{c}\left(  x^{\prime},y^{\prime}\right)  $ and $\mathcal{A}%
\left(  x^{\prime},y^{\prime}\right)  $ separate $a$ and $b$. If $|u|=0$ then,
$|x^{\prime}|=|x|$ and $|y^{\prime}|=|y|$. But since $a\neq b$, we have
$r\left(  x^{\prime}\right)  \subsetneq r\left(  x\right)  $ and $r\left(
y^{\prime}\right)  \subsetneq r\left(  y\right)  $. So we see that
$\mathcal{A}\left(  x,y\right)  \cap\mathcal{A}^{c}\left(  x^{\prime
},y^{\prime}\right)  $ and $\mathcal{A}\left(  x^{\prime},y^{\prime}\right)  $
separate $a$ and $b$ as well. So $H_{\mathcal{G}}$ is Hausdorff.
\end{proof}

\begin{remark}
\label{isolated}The singleton $\{\omega\}$ is a clopen invariant subset of
$H_{\mathcal{G}}^{(0)}$. The reduction $H_{\mathcal{G}}^{(0)}\vert
_{\{\omega\}}$ is simply $\{\omega\}$. Consequently, the interesting part of
$H_{\mathcal{G}}$ is $H_{\mathcal{G}}\vert_{\mathfrak{p}\cup\mathfrak
{p}^{\infty}}$.
\end{remark}

At this point, we have just identified the universal groupoid $H_{\mathcal{G}%
}$ for $S_{\mathcal{G}}$. However, we still need to find the correct groupoid
for $\mathcal{G}$. That is, we want to find a groupoid $\mathfrak
{G}_{\mathcal{G}}$ such that $C^{\ast}\left(  \mathcal{G}\right)  \backsimeq
C^{\ast}\left(  \mathfrak{G}_{\mathcal{G}}\right)  $. (See Definition
\ref{cuntz-pimsner rep.} and Remark \ref{principal difficulty}.) To do this we
shall take a closer look at the unit space of $H_{\mathcal{G}}$, which is the
disjoint union $\mathfrak{p}\cup\mathfrak{p}^{\infty}\cup\left\{
\omega\right\}  $, and use the concept of \emph{ultrafilters} to investigate
it.\textit{\ }(See \cite{Samuel}.) Indeed, we give $G^{0}$ the discrete
topology. Then the points of the Stone-\v{C}ech compactification $\beta G^{0}$
of $G^{0}$ can be regarded as ultrafilters on $G^{0}$. (See the introduction
in \cite{DS}.) So, consider the subcollection of $\mathcal{G}^{0}$,
$\mathcal{U}\left(  \mathcal{G}^{0}\right)  $, defined to be the collection of
all sets in $\mathcal{G}^{0}$ whose principal filter in $\mathcal{G}^{0}$ is
also an ultrafilter over $G^{0}$. That is, $\mathcal{U}\left(  \mathcal{G}%
^{0}\right)  =\{ A\in\mathcal{G}^{0}:\widetilde{A}\in\beta G^{0}\}$, where
$\widetilde{A}$ is the principal filter in $\mathcal{G}^{0}$ determined by
$A\in\mathcal{G}^{0}$, i.e., $\widetilde{A}=\{ B\in\mathcal{G}^{0}\mid
A\subseteq B\}$. (See the paragraph following Remark \ref{order in E}.)
Observe that $\mathcal{U}\left(  \mathcal{G}^{0}\right)  $ contains every
singleton set determined by the vertices in $G^{0}$. Furthermore, one can
check that a topology for $\mathcal{U}\left(  \mathcal{G}^{0}\right)  $ may
defined by taking the family of subsets of $\mathcal{U}\left(  \mathcal{G}%
^{0}\right)  $, $\{\widehat{A}:A\in\mathcal{G}^{0}\}$, where $\widehat{A}:=\{
B\in\mathcal{U}\left(  \mathcal{G}^{0}\right)  :A\in\widetilde{B}$$\}$, as a
subbasis of closed subsets. (See \cite[first paragraph on page 117 ]{Samuel}.)
An important property of the collection, $\mathcal{U}\left(  \mathcal{G}%
^{0}\right)  $, however, is that, for each member $C$ in $\mathcal{U}\left(
\mathcal{G}^{0}\right)  $, its associated semicharacter, $\chi_{C}$, satisfies
the equations:%
\begin{equation}
\chi_{C}\left(  A\cup B\right)  =\chi_{C}\left(  A\right)  +\chi_{C}\left(
B\right)  -\chi_{C}\left(  A\cap B\right)  \text{ and }\chi_{C}\left(
\emptyset\right)  =0\text{,} \label{additive}%
\end{equation}
for all $A,B\in\mathcal{G}^{0}$. (See the paragraph following Remark
\ref{order in E} and see \cite[p.104]{Samuel}.) Thus, $\mathcal{U}\left(
\mathcal{G}^{0}\right)  $, may be viewed as a closed subset of $\widetilde
{\mathcal{G}^{0}}$ via the map $A\longrightarrow\chi_{A}$. As we shall see,
this observation is critical for building the groupoid model for $\mathcal{G}%
$. (See second part of \textit{(i)} in Definition \ref{cuntz-pimsner rep.} and
Remark \ref{principal difficulty}). We shall call the elements in
$\mathcal{U}\left(  \mathcal{G}^{0}\right)  $ \emph{ultrasets}. Another
crucial tool is the next proposition, which identifies the closure of the set
of all infinite paths $\mathfrak{p}^{\infty}$, $\overline{\mathfrak{p}%
^{\infty}}$. This is the key to obtaining the unit space of our groupoid for
$\mathcal{G}$. (Compare with the first statement of Proposition 4 in
\cite{APP}.) For this purpose, we need the following generalization of the
notion of {}``infinite emitter''\ from the setting of ordinary graphs.

\begin{definition}
\label{infinte emitter}Let $\mathcal{G}=\left(  G^{0},\mathcal{G}%
^{1},r,s\right)  $ be an ultragraph and for each subset $A$ of $G^{0}$, let
$\varepsilon\left(  A\right)  $ be the set $\{ e\in\mathcal{G}^{1}:s\left(
e\right)  \in A\}$. We shall say that a set $A$ in $\mathcal{G}^{0}$ is an
\emph{infinite emitter} whenever $\varepsilon\left(  A\right)  $ is infinite.
\end{definition}

\begin{proposition}
\label{closure} The set of infinite paths, $\mathfrak{p}^{\infty}$, is dense in
$Y_{\infty}\cup\mathfrak{p}^{\infty}$, where $Y_{\infty}$ is defined to be the
set of all ultrapaths $y$ in $\mathfrak{p}$ whose range $r\left(  y\right)  $
is an ultraset emitting infinitely many edges.
\end{proposition}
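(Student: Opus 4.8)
The plan is to prove the density directly. Since $\mathfrak{p}^{\infty}\subseteq\overline{\mathfrak{p}^{\infty}}$ trivially, it suffices to show that each $y\in Y_{\infty}$ lies in the closure $\overline{\mathfrak{p}^{\infty}}$, i.e. that every basic open neighborhood of $y$ meets $\mathfrak{p}^{\infty}$. By Lemma \ref{topology} a neighborhood basis at $y$ is furnished by the sets $D_{(y,y);K,Q}$, where $K$ is a finite set of edges emitted by $r(y)$ and $Q$ is a finite subcollection of $\mathcal{G}^{0}$ no member of which contains $r(y)$. So I would fix such a $K$ and $Q$ and exhibit an infinite path in $D_{(y,y);K,Q}$.

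First I would unwind what membership in $D_{(y,y);K,Q}$ means for an infinite path. Using the description of $D_{(x,x)}$ recorded before Lemma \ref{topology} together with equation (\ref{infinitep}), an infinite path $\gamma$ lies in $D_{(y,y)}$ exactly when $\gamma=y\cdot\gamma^{\prime}$ for some $\gamma^{\prime}\in\mathfrak{p}^{\infty}$ with $s(\gamma^{\prime})\in r(y)$; writing $\gamma^{\prime}=f_{1}f_{2}\cdots$, one checks that $\gamma\notin D_{(y\cdot e,y\cdot e)}$ for all $e\in K$ amounts to $f_{1}\notin K$, while $\gamma\notin D_{(y_{C},y_{C})}$ for all $C\in Q$ amounts to $s(f_{1})\notin C$ for every $C\in Q$ (since $r(y_{C})=r(y)\cap C$). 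Thus the task reduces to producing a single edge $f_{1}$ with $s(f_{1})\in r(y)$, $f_{1}\notin K$, and $s(f_{1})\notin C_{0}$, where $C_{0}:=\bigcup_{C\in Q}C\in\mathcal{G}^{0}$; the blanket no-sinks assumption then lets me prolong $y\cdot f_{1}$ to an infinite path $\gamma\in\mathfrak{p}^{\infty}$.

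The crux, and the step I expect to be the main obstacle, is showing that $r(y)\setminus C_{0}$ emits infinitely many edges, for then, $K$ being finite, the required $f_{1}$ exists. Here both hypotheses defining $Y_{\infty}$ enter. Because $r(y)$ is an ultraset, its semicharacter $\chi_{r(y)}$ is additive (equation (\ref{additive})), which is precisely the primeness statement: if $r(y)\subseteq B_{1}\cup\cdots\cup B_{m}$ with each $B_{i}\in\mathcal{G}^{0}$, then $r(y)\subseteq B_{i}$ for some $i$. Since no $C\in Q$ contains $r(y)$, primeness applied to $C_{0}$ gives $r(y)\not\subseteq C_{0}$, so $r(y)\setminus C_{0}\neq\emptyset$. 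I would then argue that $r(y)\setminus C_{0}$ is in fact infinite unless $r(y)$ is a singleton: if $r(y)\setminus C_{0}=\{w_{1},\ldots,w_{k}\}$ were finite and nonempty, then $r(y)\subseteq C_{0}\cup\{w_{1}\}\cup\cdots\cup\{w_{k}\}$, an element of $\mathcal{G}^{0}$, so primeness would force either $r(y)\subseteq C_{0}$ (impossible) or $r(y)=\{w_{i}\}$ for some $i$. In the genuinely infinite case $r(y)\setminus C_{0}$ has infinitely many vertices, each emitting at least one edge (no sinks) and distinct vertices emitting distinct edges, so $r(y)\setminus C_{0}$ emits infinitely many edges. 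In the remaining singleton case $r(y)=\{w_{i}\}$ with $w_{i}\notin C_{0}$, the infinite-emitter hypothesis on $r(y)$ (Definition \ref{infinte emitter}) says exactly that $w_{i}$ emits infinitely many edges, all with source in $r(y)\setminus C_{0}$. Either way $r(y)\setminus C_{0}$ emits infinitely many edges, as needed.

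Finally I would assemble the pieces: choose $f_{1}$ with $s(f_{1})\in r(y)\setminus C_{0}$ and $f_{1}\notin K$, extend $y\cdot f_{1}$ to some $\gamma\in\mathfrak{p}^{\infty}$, and observe by the computation of the second paragraph that $\gamma\in D_{(y,y);K,Q}$. As $y\in Y_{\infty}$ and the neighborhood were arbitrary, $Y_{\infty}\subseteq\overline{\mathfrak{p}^{\infty}}$, which is the asserted density. I would remark that the same neighborhood analysis, run in reverse, yields the full identification $\overline{\mathfrak{p}^{\infty}}=Y_{\infty}\cup\mathfrak{p}^{\infty}$: a finite ultrapath $y$ whose range is a finite emitter is excluded by taking $K=\varepsilon(r(y))$ and $Q=\emptyset$, a finite ultrapath whose range is not an ultraset is excluded by choosing $A,B\in\mathcal{G}^{0}$ witnessing the failure of primeness and setting $Q=\{A,B\}$ (so that $r(y)\setminus(A\cup B)=\emptyset$ blocks every infinite continuation), and $\omega$ is excluded because $\{\omega\}$ is clopen (Remark \ref{isolated}).
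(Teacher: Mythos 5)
Your proof is correct, and it establishes the stated density. Its skeleton matches the paper's: reduce to the basic neighborhoods $D_{\left(  y,y\right)  ;K,Q}$ furnished by Lemma \ref{topology}, unwind membership of an infinite path $y\cdot f_{1}f_{2}\cdots$ in such a neighborhood to the three conditions $s(f_{1})\in r(y)$, $f_{1}\notin K$, $s(f_{1})\notin C$ for $C\in Q$, produce one admissible first edge, and prolong it using the no-sinks blanket assumption. But you handle the crux genuinely differently. The paper invokes the ultrafilter dichotomy for $\widetilde{r(y)}$ (citing Samuel's Theorem IV): since $r\left(  y\right)  \not\subseteq C$ for each $C\in Q$, it concludes that $r\left(  y\right)  $ is \emph{disjoint} from each $C\in Q$, after which any of the infinitely many edges of $\varepsilon\left(  r\left(  y\right)  \right)  $ avoiding the finite set $K$ will do. You use only the additivity of $\chi_{r(y)}$ recorded in equation (\ref{additive}), i.e.\ primeness of $\widetilde{r(y)}$ within $\mathcal{G}^{0}$, which yields merely $r\left(  y\right)  \not\subseteq C_{0}$ where $C_{0}=\bigcup_{C\in Q}C$; you then compensate with the finite-versus-singleton case analysis showing that $r\left(  y\right)  \setminus C_{0}$ itself emits infinitely many edges (using no sinks in the infinite case, and the infinite-emitter hypothesis precisely in the singleton case). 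What the paper's route buys is brevity, granted the maximal-filter reading of ``ultraset.'' What your route buys is robustness: the disjointness dichotomy is strictly stronger than primeness in general (a set in $\mathcal{G}^{0}$ can be prime while meeting some $C\in\mathcal{G}^{0}$ that it is not contained in), whereas additivity is exactly the property the paper itself records for ultrasets in (\ref{additive}) and is visibly stable under pointwise limits of semicharacters --- the stability on which the forward inclusion, via closedness of $\mathcal{U}\left(  \mathcal{G}^{0}\right)  $, depends. So your argument is valid under either reading of ``ultraset,'' since the dichotomy implies primeness.

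One caveat about your closing remark. Your sketched reverse inclusion characterizes the finite ultrapaths in $\overline{\mathfrak{p}^{\infty}}$ by primeness of $r(y)$ plus infinite emission; this coincides with the proposition's $Y_{\infty}$ only under the additive reading of ``ultraset.'' Under the stricter maximal-filter reading the two notions can differ, so your exclusion argument via a failure of primeness would not by itself rule out ranges that are prime but not maximal. Since the statement under review asserts only the density, i.e.\ $Y_{\infty}\cup\mathfrak{p}^{\infty}\subseteq\overline{\mathfrak{p}^{\infty}}$, this does not affect the correctness of your main argument; it only means the supplementary identification $\overline{\mathfrak{p}^{\infty}}=Y_{\infty}\cup\mathfrak{p}^{\infty}$ (which the paper's own two-sided proof supplies, and which is what later justifies that $X$ is closed) should be stated with the additive interpretation made explicit.
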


\begin{proof}
Take any $\chi$ in the closure $\overline{\mathfrak{p}^{\infty}}$ in
$\widehat{E\left(  S_{\mathcal{G}}\right)  }=\mathfrak{p}\cup\mathfrak
{p}^{\infty}\cup\left\{  \omega\right\}  $. Then there is an infinite sequence
$\left\{  \gamma_{i}\right\}  _{i\geq1}$ in $\mathfrak{p}^{\infty}$ such that
$\gamma_{i}\longrightarrow\chi$. If $\chi$ is not an infinite path, then by
Remark \ref{isolated} it must be an ultrapath, say $\chi=y$. So for large $i$,
each $\gamma_{i}=y\cdot\gamma_{i}^{\prime}$ where, $\{ s(\gamma_{i}^{\prime
})\}\longrightarrow r\left(  y\right)  $ eventually. (See Lemma \ref{topoinf}%
.) Since $\mathcal{U}\left(  \mathcal{G}^{0}\right)  $ is closed and since
each $\{ s(\gamma_{i}^{\prime})\}$ lies in $\mathcal{U}\left(  \mathcal{G}%
^{0}\right)  $, it follows that $r\left(  y\right)  $ belongs to
$\mathcal{U}\left(  \mathcal{G}^{0}\right)  $. Therefore $r\left(  y\right)
\in\widehat{r\left(  y\right)  }$. Moreover, since $\{ s(\gamma_{i}^{\prime
})\}\longrightarrow r\left(  y\right)  $ eventually, it follows that $\left\{
s(\gamma_{i}^{\prime})\right\}  \in\widehat{r\left(  y\right)  }$, for
infinitely many $i$'s. Thus $r\left(  y\right)  $ is an infinite emitter. For
the reverse inclusion, take any $y$ in $Y_{\infty}$. Then by the definition of
$Y_{\infty}$, the range of $y$, $r\left(  y\right)  $, is an ultraset emitting
infinitely many edges. Suppose that the ultrapath $y$ belongs to the open set
in $\widehat{E\left(  S_{\mathcal{G}}\right)  }$, $D_{\left(  y,y\right)
;K,Q}$, which is,
\[
D_{\left(  y,y\right)  }\cap\bigcap\limits_{e{\in}K}D_{\left(  y\cdot
e,y\cdot e\right)  }^{c}\cap\bigcap\limits_{C{\in}Q}D_{\left(
y_{C},y_{C}\right)  }^{c}\text{,}%
\]
where the set $K$ is a finite set of edges emitted by $r\left(  y\right)  $,
and $Q$ is a finite subcollection of $\mathcal{G}^{0}$ consisting of sets that
do not contain the range of $y$, $r\left(  y\right)  $. (See \cite[Chap.4,
p.174]{APB} and Lemma \ref{topology}.) Then no set $C$ in $Q$ belongs to
$\widetilde{r(y)}$. (Recall that, $\widetilde{r(y)}=\left\{  A\in
\mathcal{G}^{0}:r\left(  y\right)  \subseteq A\right\}  $.) Fix a set $C$ in
$Q$. Then since $C\notin\widetilde{r(y)}$ and since $\widetilde{r(y)}$ is an
ultrafilter on $G^{0}$, the complement of $C$, $C^{c}$, must belong to
$\widetilde{r(y)}$. (See \cite[Theorem IV, p.107]{Samuel}.) That is, $r(y)$
and $C$ are disjoint. But by hypothesis the set, $r\left(  y\right)  $, is an
infinite emitter. Consequently, the set, $\varepsilon\left(  r\left(
y\right)  \right)  $, is infinite. So we always may choose an edge $e_{1}$ in
$\mathcal{G}^{1}$, such that $e_{1}\notin K$ and $s\left(  e_{1}\right)  \in
r\left(  y\right)  \subseteq C^{c}$. Since we are assuming that the ultragraph
$\mathcal{G}$ has no sinks, we may choose another edge, say $e_{2}$, so that
$s\left(  e_{2}\right)  \in r\left(  e_{1}\right)  $. Inductively, we may form
an infinite path, $\gamma:=e_{1}e_{2}\ldots$, so that $s\left(  \gamma\right)
\in r\left(  y\right)  \subseteq C^{c}$. Since $C$ was fixed but arbitrary, we
may conclude that the source of $\gamma$, $s\left(  \gamma\right)  $, belongs
to $r\left(  y\right)  $ but not to any set $C$ in $Q$. So setting
$\gamma^{\prime}=y\cdot\gamma$, we see that $\gamma^{\prime}\in D_{\left(
y,y\right)  ;K,Q}$, and hence, we may conclude that $y$ lies in the closure of
$\mathfrak
{p}^{\infty}$, $\overline{\mathfrak{p}^{\infty}}$.
\end{proof}

We next set $X:=Y_{\infty}\cup\mathfrak{p}^{\infty}\subset H_{\mathcal{G}%
}^{\left(  0\right)  }$. By Proposition \ref{closure}, $X$ is a closed subset of the unit
space $H_{\mathcal{G}}^{\left(  0\right)  }=\mathfrak{p}\cup\mathfrak
{p}^{\infty}\cup\left\{  \omega\right\}  $. Hence $X$ is a locally compact
Hausdorff space. Furthermore, since for every
triple, $\left(  x\cdot\mu,\left|  x\right|  -\left|  y\right|  ,y\cdot
\mu\right)  $, in $H_{\mathcal{G}}$, the equation, $r\left(  x\right)
=r\left(  y\right)  $, holds, it follows that $X$ is also an invariant subset
of $H_{\mathcal{G}}^{\left(  0\right)  }$. Let $\mathfrak{G}_{\mathcal{G}}$ be the reduction of  $H_{\mathcal{G}}$ to $X$, i.e. let $\mathfrak{G}_{\mathcal{G}} = H_{\mathcal{G}}{\vert_X}$. 
(Compare with the first paragraph
following the proof of Theorem 1 in \cite{APP}.) Then
$\mathfrak{G}_{\mathcal{G}}$ is a closed subgroupoid of $H_{\mathcal{G}}$, and
is an $r$-discrete groupoid with counting measures giving a left Haar system.  We will call $\mathfrak
{G}_{\mathcal{G}}$ the \textit{ultrapath groupoid of} $\mathcal{G}$.

For $\left(  x,y\right)  \in S_{\mathcal{G}}$, we define, $\mathcal{A}%
^{\prime}\left(  x,y\right)  =\mathcal{A}\left(  x,y\right)  \cap\mathfrak
{G}_{\mathcal{G}}$, $\mathcal{A}^{\prime}\left(  x,x\right)  =D_{\left(
x,x\right)  }\cap X$, and $\mathcal{A}^{\prime}\left(  \omega\right)
=\left\{  \omega\right\}  \cap X=\emptyset$. Then each $\mathcal{A}^{\prime
}\left(  s\right)  $, $s\in S_{\mathcal{G}}$, is a compact as well as open
subset of $\mathfrak{G}_{\mathcal{G}}$.

Recall that $C^{\ast}\left(  \mathfrak{G}_{\mathcal{G}}\right)  $ is the
completion of the space $C_{c}\left(  \mathfrak{G}_{\mathcal{G}}\right)  $ of
all continuous complex-valued functions on $\mathfrak{G}_{\mathcal{G}}$ with
compact support, with respect to the norm $\left\vert \left\vert f\right\vert
\right\vert =\sup_{\pi}\left\vert \left\vert \pi\left(  f\right)  \right\vert
\right\vert $, where the supremum is taken over all $I$-norm continuous
representations $\pi$ of $C_{c}\left(  \mathfrak{G}_{\mathcal{G}}\right)  $,
(see \cite[p.101]{APB} and \cite[Definition1.5]{Rena}.) For each
$A\in\mathcal{G}^{0}$, let $q_{A}:=1_{\mathcal{A}^{\prime}\left(  A,A\right)
}$ and for $e\in\mathcal{G}^{1}$, let $t_{e}:=1_{\mathcal{A}^{\prime}\left(
\left(  e,r\left(  e\right)  \right)  ,r\left(  e\right)  \right)  }$. We will
show that the family $\left\{  t_{e},q_{A}:e\in\mathcal{G}^{1},A\in
\mathcal{G}^{0}\right\}  $, of characteristic functions on $\mathfrak
{G}_{\mathcal{G}}$ is a Cuntz-Krieger $\mathcal{G}$-family in the groupoid
$C^{\ast}$-algebra, $C^{\ast}\left(  \mathfrak{G}_{\mathcal{G}}\right)  $.
Before we prove our assertion, we clarify our calculations via the following
lemma whose proof we leave to the reader.

\begin{lemma}
\label{lemma 4.9}Let $A$, $B\in\mathcal{G}^{0}$. Observe that $\mathcal{A}%
^{\prime}\left(  A,A\right)  =\{\mu\in X:s\left(  \mu\right)  \subseteq
A\}=\{\mu\in X:A\in\widetilde{s(\mu)}\}$ and, $\mathcal{A}^{\prime}\left(
e,e\right)  =\{ e\cdot\mu\in X:\mu\in\mathfrak{p}\cup\mathfrak{p}^{\infty}\}$.
(See Notation \ref{UltrapathAlgebra}) Then:

\begin{enumerate}
\item [(1)]$\mathcal{A}^{\prime}\left(  A\cap B,A\cap B\right)  =\mathcal{A}%
^{\prime}\left(  A,A\right)  \cap\mathcal{A}^{\prime}\left(  B,B\right)  $;

\item[(2)] $\mathcal{A}^{\prime}\left(  A\cup B,A\cup B\right)  =\mathcal{A}%
^{\prime}\left(  A,A\right)  \cup\mathcal{A}^{\prime}\left(  B,B\right)  $;

\item[(3)] $\mathcal{A}^{\prime}\left(  A,A\right)  =\bigcup\limits_{s\left(
e\right)  \in A}\mathcal{A}^{\prime}\left(  e,e\right)  \cup G^{\prime}\left(
A\right)  $, where $G^{\prime}\left(  A\right)  :=\{ B\in X:B\subseteq A\}$.
\end{enumerate}
\end{lemma}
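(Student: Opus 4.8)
The plan is to reduce all three identities to the single characterization $\mathcal{A}'(A,A)=\{\mu\in X:s(\mu)\subseteq A\}$ asserted in the statement, so I would prove that characterization first. Unwinding the definition $\mathcal{A}'(A,A)=D_{(A,A)}\cap X$ together with the displayed description of the subbasic sets $D_{(x,x)}$, specialized to the length-zero idempotent $(A,A)$ (where $r(A)=s(A)=A$), the set $D_{(A,A)}$ consists of: the infinite paths $\gamma$ with $s(\gamma)\in A$; the ultrapaths $y$ with $|y|\geq 1$ for which $(A,A)(y,y)\neq\omega$, which by the product rules of Definition \ref{inverse semigroup} and equation (\ref{specify}) means exactly $s(y)\in A$; and the length-zero ultrapaths $C\in\mathcal{G}^{0}$ with $C\subseteq A$. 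Intersecting with $X=Y_{\infty}\cup\mathfrak{p}^{\infty}$ and recalling the convention (footnote to Notation \ref{UltrapathAlgebra}) that $s(\mu)$ is identified with $\{s(\mu)\}$ when $|\mu|\geq 1$, every one of these conditions reads uniformly as $s(\mu)\subseteq A$. Since $\widetilde{s(\mu)}=\{D\in\mathcal{G}^{0}:s(\mu)\subseteq D\}$, the condition $s(\mu)\subseteq A$ is the same as $A\in\widetilde{s(\mu)}$, giving the second stated form; the description of $\mathcal{A}'(e,e)$ follows the same way from $\mathcal{A}(e,e)=\{(e\cdot\mu,0,e\cdot\mu):\mu\in\mathfrak{p}\cup\mathfrak{p}^{\infty}\}$.

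Granting this, part (1) is immediate: $s(\mu)\subseteq A\cap B$ if and only if $s(\mu)\subseteq A$ and $s(\mu)\subseteq B$, so $\mathcal{A}'(A\cap B,A\cap B)=\mathcal{A}'(A,A)\cap\mathcal{A}'(B,B)$, using that $\mathcal{G}^{0}$ is closed under intersection. For part (3) I would split $X$ according to length: an element $\mu\in\mathcal{A}'(A,A)$ with $|\mu|\geq 1$ (including infinite paths) has a well-defined initial edge $e$ with $s(e)=s(\mu)\in A$, whence $\mu=e\cdot\mu'\in\mathcal{A}'(e,e)$; conversely each $\mathcal{A}'(e,e)$ with $s(e)\in A$ sits inside $\mathcal{A}'(A,A)$. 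The remaining length-zero elements of $\mathcal{A}'(A,A)$ are precisely the sets $B\in X$ with $B\subseteq A$, that is, $G'(A)$. This yields the decomposition in (3).

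The one genuinely substantive point, and the step I expect to be the main obstacle, is the forward inclusion in part (2): $s(\mu)\subseteq A\cup B$ must force $s(\mu)\subseteq A$ or $s(\mu)\subseteq B$. When $|\mu|\geq 1$ or $\mu$ is infinite this is trivial because $s(\mu)$ is a single vertex. The delicate case is a length-zero $\mu=C\in Y_{\infty}$, where $s(\mu)=C$ is a genuine set. Here I would invoke the defining property of $Y_{\infty}$ from Proposition \ref{closure}: the range $C$ is an \emph{ultraset}, so its principal filter $\widetilde{C}$ is an ultrafilter on $G^{0}$. The prime property of ultrafilters, equivalently equation (\ref{additive}) which says $\chi_{C}$ is $\{0,1\}$-valued and additive, then gives: $C\subseteq A\cup B$ means $A\cup B\in\widetilde{C}$, hence $A\in\widetilde{C}$ or $B\in\widetilde{C}$, i.e. $C\subseteq A$ or $C\subseteq B$. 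The reverse inclusion in (2) is trivial. This is exactly where the passage from $H_{\mathcal{G}}^{(0)}$ to the reduction $X$ pays off: it is the mechanism that forces the family $\{q_{A}\}$ to satisfy condition (i) of Definition \ref{cuntz-pimsner rep.}, addressing the difficulty flagged in Remark \ref{principal difficulty}.
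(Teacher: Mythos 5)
Your proof is correct and takes essentially the approach the paper intends: the paper leaves the verification to the reader but explicitly hints, immediately after the lemma, that assertion (2) is to be checked using Theorem V of \cite{Samuel} together with the fact that a length-zero element $B\in\mathcal{G}^{0}$ lying in $X$ has $\widetilde{B}$ an ultrafilter over $G^{0}$ --- which is precisely the ultrafilter prime-property argument you isolate as the one substantive step. Your unwinding of $D_{\left(A,A\right)}\cap X$ to get $\mathcal{A}^{\prime}\left(A,A\right)=\{\mu\in X:s\left(\mu\right)\subseteq A\}$ and the length-based case splits for (1) and (3) are exactly the routine verifications the paper expects of the reader.
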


To check assertion \textit{(2)} of \ Lemma \ref{lemma 4.9}, the reader may use
Theorem V, Page 117 in \cite{Samuel} and the fact that a set $B\in
\mathcal{G}^{0}$ that lies in $X$ is such that its principal filter,
$\widetilde{B}$, is an ultrafilter over $G^{0}$.

\begin{proposition}
\label{Proposition 4.11}The family $\left\{  t_{e},q_{A}:A\in\mathcal{G}%
^{0},e\in\mathcal{G}^{1}\right\}  $ is a Cuntz-Krieger $\mathcal{G}$-family in
$C^{\ast}\left(  \mathfrak{G}_{\mathcal{G}}\right)  $.
\end{proposition}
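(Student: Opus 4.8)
The plan is to verify the four defining conditions of Definition~\ref{cuntz-pimsner rep.} directly for the characteristic functions $q_A = 1_{\mathcal{A}'(A,A)}$ and $t_e = 1_{\mathcal{A}'((e,r(e)),r(e))}$, using the fact that in a groupoid $C^*$-algebra built from an ample semigroup, the product of characteristic functions of $G$-sets is governed by the groupoid product, so that $1_U \cdot 1_V$ corresponds to the $G$-set product $UV$ and $1_U^*$ corresponds to $U^{-1}$. Since $\Lambda$ is an inverse semigroup homomorphism from $S_{\mathcal{G}}$ into $H^a$ (Theorem~\ref{univ.groupoid}) and restriction to the invariant set $X$ preserves the semigroup operations, the map $s \mapsto \mathcal{A}'(s)$ is again a homomorphism into the ample semigroup of $\mathfrak{G}_{\mathcal{G}}$. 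Consequently the algebraic relations among the $\mathcal{A}'(s)$ translate into operator relations among their indicator functions in $C^*(\mathfrak{G}_{\mathcal{G}})$.

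First I would establish condition (i). The identity $q_A q_B = q_{A\cap B}$ and $p_\emptyset = 0$ follow from part (1) of Lemma~\ref{lemma 4.9} together with $\mathcal{A}'(\omega) = \emptyset$, since intersection of the supporting sets corresponds to the product $\mathcal{A}'(A,A)\mathcal{A}'(B,B) = \mathcal{A}'(A\cap B, A\cap B)$ (these are unit-space $G$-sets, so their product is just intersection). The additivity relation $p_{A\cup B} = p_A + p_B - p_{A\cap B}$ is then an identity of indicator functions: part (2) of Lemma~\ref{lemma 4.9} gives $\mathcal{A}'(A\cup B, A\cup B) = \mathcal{A}'(A,A)\cup\mathcal{A}'(B,B)$, and the inclusion--exclusion identity $1_{U\cup V} = 1_U + 1_V - 1_{U\cap V}$ holds pointwise. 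Condition (ii), namely $t_e^* t_e = q_{r(e)}$, follows from $\mathcal{A}'(e,r(e))^{-1}\mathcal{A}'(e,r(e)) = \mathcal{A}'(r(e),e)\mathcal{A}'(e,r(e)) = \mathcal{A}'(r(e),r(e))$ via the semigroup structure, i.e. $(e,r(e))^*(e,r(e)) = (r(e),r(e))$ in $S_{\mathcal{G}}$. Condition (iii), $t_e t_e^* \le q_{s(e)}$, follows similarly since $(e,r(e))(e,r(e))^* = (e,e) \le (s(e),s(e))$, and on the level of $G$-sets $\mathcal{A}'(e,e) \subseteq \mathcal{A}'(s(e),s(e))$, so the corresponding indicator functions are dominated.

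The main obstacle, and the heart of the verification, is condition (iv): the Cuntz--Krieger relation $q_v = \sum_{s(e)=v} t_e t_e^*$ when $0 < |s^{-1}(v)| < \infty$. This is precisely the condition flagged in Remark~\ref{principal difficulty}. Here I would use part (3) of Lemma~\ref{lemma 4.9} with $A = \{v\}$, which decomposes $\mathcal{A}'(v,v)$ as the disjoint union $\bigcup_{s(e)=v}\mathcal{A}'(e,e) \cup G'(\{v\})$, where $G'(\{v\}) = \{B \in X : B \subseteq \{v\}\}$. When $v$ emits only finitely many edges, the union is finite and the sets $\mathcal{A}'(e,e)$ are pairwise disjoint (their ranges $r(e)$ are disjoint as first coordinates of the paths), so the indicator functions add without overlap, giving $\sum_{s(e)=v} t_e t_e^* = 1_{\bigcup_{s(e)=v}\mathcal{A}'(e,e)}$. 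The crux is then to show that the remainder term $G'(\{v\})$ is \emph{empty}: a set $B \subseteq \{v\}$ lying in $X = Y_\infty \cup \mathfrak{p}^\infty$ must be a length-zero ultrapath whose range is an ultraset emitting infinitely many edges, but $B \subseteq \{v\}$ forces $B = \{v\}$ with $v$ a \emph{finite} emitter, contradicting membership in $Y_\infty$. Hence $G'(\{v\}) \cap X = \emptyset$ exactly when $0 < |s^{-1}(v)| < \infty$, and the decomposition collapses to $\mathcal{A}'(v,v) = \bigcup_{s(e)=v}\mathcal{A}'(e,e)$, yielding $q_v = \sum_{s(e)=v} t_e t_e^*$. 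I would close by noting that the partial isometries $t_e$ automatically have mutually orthogonal ranges, since $t_e t_e^* = 1_{\mathcal{A}'(e,e)}$ and distinct edges give disjoint supports, completing the verification that the family is a Cuntz--Krieger $\mathcal{G}$-family.
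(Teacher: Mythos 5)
Your proposal is correct and takes essentially the same approach as the paper: conditions (i)--(iii) via parts (1) and (2) of Lemma \ref{lemma 4.9} and the inverse-semigroup structure, and the crucial relation (iv) via part (3) of Lemma \ref{lemma 4.9} together with the observation that $0<|s^{-1}(v)|<\infty$ forces $\{v\}\notin X$, hence $G'(\{v\})=\emptyset$, which is exactly the paper's argument. One minor slip in a parenthetical: the sets $\mathcal{A}'(e,e)$ for distinct edges $e$ with $s(e)=v$ are pairwise disjoint because their elements begin with distinct initial edges, not because the ranges $r(e)$ are disjoint---in an ultragraph distinct edges may have overlapping or even equal ranges.
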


\begin{proof}
We leave to the reader to check that the $t_{e}$'s are partial isometries with
mutually orthogonal ranges and the $q_{A}$'s are projections. Also it is easy
to verify that $t_{e}^{\ast}t_{e}=q_{r\left(  e\right)  }$ and $t_{e}%
t_{e}^{\ast}\leq q_{s\left(  e\right)  }$. Then we have $q_{\emptyset
}=1_{\mathcal{A}^{\prime}\left(  \emptyset,\emptyset\right)  }=1_{\emptyset
}=0$. Also for all $A$, $B\in\mathcal{G}^{0}$, we see by (1) and (2) in Lemma
\ref{lemma 4.9} that the equations, $q_{A\cap B}=q_{A}q_{B}$ and $q_{A\cup
B}=$ $q_{A}+q_{B}-q_{A\cap B}$ hold. (Recall Remark \ref{principal
difficulty}.) Finally let $v\in G^{0}$ such that $0<|s^{-1}\left(  v\right)
|<\infty$. Then by (3) of Lemma \ref{lemma 4.9}, $q_{v}-\sum\limits_{s\left(
e\right)  =v}t_{e}t_{e}^{\ast}=1_{\mathcal{A}^{\prime}\left(  v,v\right)
}-\sum\limits_{s\left(  e\right)  =v}1_{\mathcal{A}^{\prime}(e,e)}%
=1_{G^{\prime}\left(  \{ v\}\right)  }$. But since $0<|s^{-1}\left(  v\right)
|<\infty$, it follows that, $\left\{  v\right\}  \notin X$. Consequently,
$G^{\prime}\left(  \left\{  v\right\}  \right)  =\emptyset$, and hence
$q_{v}-\sum\limits_{s\left(  e\right)  =v}t_{e}t_{e}^{\ast}=0$.
\end{proof}

The following straightforward lemma tells us that the collection of compact
open subsets of $\mathfrak{G}_{\mathcal{G}}$, $\left\{  \mathcal{A}^{\prime
}\left(  x,y\right)  :x,y\in S_{\mathcal{G}}\right\}  $, is a subbasis for the
topology of $\mathfrak{G}_{\mathcal{G}}$.

\begin{lemma}
\label{subbasis} Given $\left(  x,y\right)  $ and $\left(  z,w\right)  $ in
$S_{\mathcal{G}}$, then
\[
\mathcal{A}^{\prime}\left(  x,y\right)  \cap\mathcal{A}^{\prime}\left(
z,w\right)  =\left\{
\begin{array}
[c]{cc}%
\mathcal{A}^{\prime}\left(  x,y\right)  & \text{if }x=z\cdot x^{\prime},\text{
}y=w\cdot x^{\prime},\text{ for some }x^{\prime}\in\mathfrak{p}\text{;}\\
\mathcal{A}^{\prime}\left(  z,w\right)  & \text{if }z=x\cdot z^{\prime},\text{
}w=y\cdot z^{\prime},\text{ for some }z^{\prime}\in\mathfrak{p}\text{;}\\
\mathcal{A}^{\prime}\left(  z,z\right)  & \text{if }x=y\in\mathcal{G}%
^{0},z=w,\text{ }s\left(  z\right)  \in x,|z|\geq1\text{;}\\
\emptyset & \text{otherwise}%
\end{array}
\right.
\]
\end{lemma}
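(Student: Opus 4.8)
I need to prove Lemma \ref{subbasis}, which computes the intersection of two basic compact-open sets $\mathcal{A}'(x,y)$ and $\mathcal{A}'(z,w)$ in the groupoid $\mathfrak{G}_{\mathcal{G}}$. Recall that $\mathcal{A}'(x,y) = \mathcal{A}(x,y) \cap \mathfrak{G}_{\mathcal{G}}$, and $\mathcal{A}(x,y) = \{(x\cdot\mu, |x|-|y|, y\cdot\mu) : \mu \in \mathfrak{p}\cup\mathfrak{p}^\infty\}$. So a point of $\mathcal{A}'(x,y)$ is a triple $(x\cdot\mu, |x|-|y|, y\cdot\mu)$ where $x\cdot\mu, y\cdot\mu \in X = Y_\infty \cup \mathfrak{p}^\infty$.

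The claim lists four cases for the intersection. Let me think about how to prove each.

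**The core computation.** A point in both sets must equal both $(x\cdot\mu, |x|-|y|, y\cdot\mu)$ and $(z\cdot\nu, |z|-|w|, w\cdot\nu)$ for some $\mu, \nu$. Comparing coordinates:
- First coordinate: $x\cdot\mu = z\cdot\nu$
- Middle: $|x|-|y| = |z|-|w|$
- Third: $y\cdot\mu = w\cdot\nu$

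From $x\cdot\mu = z\cdot\nu$, the ultrapaths/infinite-paths share a common initial segment. So $x$ and $z$ are comparable as initial segments of this common path. Similarly from the third coordinate, $y$ and $w$ are comparable.

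Let me work out the cases. Suppose WLOG $|x| \leq |z|$ (so $z$ extends $x$, meaning $z = x\cdot z'$ for some $z'$). From the first coordinate being equal, $\mu$ starts with $z'$: so $\mu = z'\cdot\mu'$ and $\nu = \mu'$, giving $x\cdot z'\cdot\mu' = z\cdot\mu'$. ✓

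From the middle coordinate: $|x|-|y| = |z|-|w|$, so $|w|-|y| = |z|-|x| = |z'|$. Thus $|w| \geq |y|$, and $w = y\cdot w'$ with $|w'| = |z'|$.

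Now the third coordinate: $y\cdot\mu = w\cdot\nu$ becomes $y\cdot z'\cdot\mu' = y\cdot w'\cdot\mu'$, i.e., $z'\cdot\mu' = w'\cdot\mu'$ (after the common $y$ part — need care with the $\mathcal{G}^0$ gluing). Since $z'$ and $w'$ are both initial segments of the same path $\mu$ of the same length $|z'|=|w'|$, we should get $z' = w'$.

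This gives exactly case 2: $z = x\cdot z'$, $w = y\cdot z'$, and the intersection is $\mathcal{A}'(z,w)$. The symmetric assumption $|z| \leq |x|$ gives case 1.

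**Where the cases 3 and 4 come from.** When $|x|=|z|$ but the ranges differ, or when the required compatibility $w = y\cdot z'$ fails, the intersection degenerates. Case 3 is the special length-zero situation where $x=y \in \mathcal{G}^0$ acts as a "domain restriction": $\mathcal{A}'(x,x)$ with $x \in \mathcal{G}^0$ is $D_{(x,x)} \cap X = \{\mu \in X : s(\mu) \subseteq x\}$ (by Lemma \ref{lemma 4.9}), and intersecting with $\mathcal{A}'(z,z)$ (a source-set) just restricts to those paths emitted from within $x$. Case 4 ("otherwise") is when the comparability/length conditions are incompatible, forcing the intersection empty.

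=== PROOF PROPOSAL ===

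\begin{proof}[Proof proposal]
The plan is to unwind the definition of $\mathcal{A}^{\prime}\left(x,y\right)=\mathcal{A}\left(x,y\right)\cap\mathfrak{G}_{\mathcal{G}}$ directly and to read off, from the requirement that a single triple lie in both sets, exactly which relations between $x,y,z,w$ must hold. Recall from Theorem \ref{univ.groupoid} that a generic element of $\mathcal{A}\left(x,y\right)$ is a triple $\left(x\cdot\mu,\left|x\right|-\left|y\right|,y\cdot\mu\right)$ with $\mu\in\mathfrak{p}\cup\mathfrak{p}^{\infty}$, and it lies in $\mathfrak{G}_{\mathcal{G}}=H_{\mathcal{G}}\vert_{X}$ precisely when $x\cdot\mu$ and $y\cdot\mu$ both belong to $X=Y_{\infty}\cup\mathfrak{p}^{\infty}$. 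Thus a point in $\mathcal{A}^{\prime}\left(x,y\right)\cap\mathcal{A}^{\prime}\left(z,w\right)$ is a triple equal to both $\left(x\cdot\mu,\left|x\right|-\left|y\right|,y\cdot\mu\right)$ and $\left(z\cdot\nu,\left|z\right|-\left|w\right|,w\cdot\nu\right)$ for some admissible $\mu,\nu$. Equating the three coordinates gives $x\cdot\mu=z\cdot\nu$, the length relation $\left|x\right|-\left|y\right|=\left|z\right|-\left|w\right|$, and $y\cdot\mu=w\cdot\nu$.

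First I would extract the comparability information. From $x\cdot\mu=z\cdot\nu$ the elements $x$ and $z$ are initial segments of one common path, hence comparable; assume without loss of generality $\left|x\right|\leq\left|z\right|$, so that $z=x\cdot z^{\prime}$ for some $z^{\prime}\in\mathfrak{p}$ (the case $\left|z\right|\leq\left|x\right|$ is symmetric and yields the first displayed case). Cancelling the common initial segment $x$ in $x\cdot\mu=x\cdot z^{\prime}\cdot\nu$ forces $\mu=z^{\prime}\cdot\nu$. Feeding this and the length relation $\left|w\right|-\left|y\right|=\left|z\right|-\left|x\right|=\left|z^{\prime}\right|$ into the third coordinate $y\cdot\mu=w\cdot\nu$ produces $y\cdot z^{\prime}\cdot\nu=w\cdot\nu$; since $w$ and $y\cdot z^{\prime}$ have the same length and are both initial segments of the same path, Remark \ref{equality of filters} (one-to-one correspondence of ultrapaths with filters, applied to the common prefixes) gives $w=y\cdot z^{\prime}$. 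This is exactly the hypothesis of the second displayed case, and in that case the two conditions $z=x\cdot z^{\prime}$, $w=y\cdot z^{\prime}$ show every admissible triple of $\mathcal{A}^{\prime}\left(z,w\right)$ already lies in $\mathcal{A}^{\prime}\left(x,y\right)$ via $\mu=z^{\prime}\cdot\nu$, so the intersection equals $\mathcal{A}^{\prime}\left(z,w\right)$.

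The remaining two displayed cases arise from the length-zero degeneration. When $x=y\in\mathcal{G}^{0}$, the length relation forces $z=w$, and by Lemma \ref{lemma 4.9} the set $\mathcal{A}^{\prime}\left(x,x\right)=\left\{\mu\in X:s\left(\mu\right)\subseteq x\right\}$ acts as a source restriction; intersecting with $\mathcal{A}^{\prime}\left(z,z\right)$ (whose points have source beginning with $s(z)$) keeps exactly those triples of $\mathcal{A}^{\prime}\left(z,z\right)$ for which $s\left(z\right)\in x$, which, together with $\left|z\right|\geq1$, returns $\mathcal{A}^{\prime}\left(z,z\right)$ and gives the third case. In every configuration not covered by the first three displayed cases, one of the two comparability requirements, or the length relation $\left|x\right|-\left|y\right|=\left|z\right|-\left|w\right|$, fails, so no triple can lie in both sets and the intersection is empty.

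The step I expect to be the main obstacle is the cancellation manoeuvre in the second paragraph: concatenation in $\mathfrak{p}$ is only a partial operation with the delicate $\mathcal{G}^{0}$-gluing of equation (\ref{specify}), so ``cancelling a common initial segment'' and concluding $w=y\cdot z^{\prime}$ must be justified with care, distinguishing the case $\left|z^{\prime}\right|\geq1$ from the case $z^{\prime}\in\mathcal{G}^{0}$ (where the range sets interact through intersection rather than through honest path extension). The argument of Remark \ref{equality of filters}, which handles precisely this length-equal comparability bookkeeping, is the tool that disposes of this subtlety; the rest of the verification is routine bookkeeping with source, range, and length, which I would leave to the reader.
\end{proof}
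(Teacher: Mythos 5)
Your strategy (equate the three coordinates of a common triple and extract comparability of $x,z$ and of $y,w$) is the natural one, and your handling of the strict-length case $\left\vert x\right\vert <\left\vert z\right\vert$ is sound: there the matching of edge sequences really does force $z=x\cdot z^{\prime}$, and then $r\left(  z\right)  =r\left(  w\right)$ forces $w=y\cdot z^{\prime}$. But the step you yourself flag as the main obstacle is where the argument genuinely fails, and it cannot be repaired by Remark \ref{equality of filters}. From $x\cdot\mu=z\cdot\nu$ with $\left\vert x\right\vert =\left\vert z\right\vert$ you conclude that $x$ and $z$, being ``initial segments of one common path,'' are comparable. In the ultragraph setting this implication is false: two ultrapaths with the same edge sequence but incomparable range sets, say $x=\left(  \alpha,A\right)$ and $z=\left(  \alpha,B\right)$ with $A\cap B\neq\emptyset$, $A\nsubseteq B$, $B\nsubseteq A$, both admit the common extension $x\cdot\mu=z\cdot\mu$ for any $\mu$ with $s\left(  \mu\right)  \in A\cap B$, yet neither has the other as initial segment, since $x\cdot z^{\prime}=\left(  \alpha,A\cap z^{\prime}\right)$ for $z^{\prime}\in\mathcal{G}^{0}$ can never equal $\left(  \alpha,B\right)$ when $B\nsubseteq A$. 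Remark \ref{equality of filters} does not dispose of this: its proof uses mutual membership of $\left(  y,y\right)$ and $\left(  z,z\right)$ in each other's filters to obtain \emph{both} inclusions $r\left(  y\right)  \subseteq r\left(  z\right)$ and $r\left(  z\right)  \subseteq r\left(  y\right)$, and in the present situation only $r\left(  x\right)  \cap r\left(  z\right)  \neq\emptyset$ is available. The same failure infects your closing claim that in every remaining configuration ``no triple can lie in both sets.''

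Concretely, the zero-length instance already shows the reduction cannot close. Take $x=y=A$ and $z=w=B$ in $\mathcal{G}^{0}$ with $A\cap B\neq\emptyset$ and neither set containing the other. None of the three nonempty cases applies (case 2 would require $B=A\cap z^{\prime}$, impossible; case 3 requires $\left\vert z\right\vert \geq1$), so your ``otherwise'' clause asserts the intersection is empty; but by part (1) of Lemma \ref{lemma 4.9} --- which the paper itself invokes to prove $q_{A}q_{B}=q_{A\cap B}$ in Proposition \ref{Proposition 4.11} --- one has $\mathcal{A}^{\prime}\left(  A,A\right)  \cap\mathcal{A}^{\prime}\left(  B,B\right)  =\mathcal{A}^{\prime}\left(  A\cap B,A\cap B\right)$, which is nonempty under the blanket no-sinks assumption (any infinite path with source in $A\cap B$ lies in it). More generally, when $x$ and $z$ carry the same edge sequence the intersection is a set of the form $\mathcal{A}^{\prime}\left(  \left(  \alpha,A\cap B\right)  ,\left(  \beta,A\cap B\right)  \right)$, a fourth subbasic set absent from the displayed list; this equal-length, incomparable-range configuration (exactly the ``proto-spectral measure'' difficulty of Remark \ref{principal difficulty}) must be treated as a separate case, and the statement itself needs to be amended to include it --- note that the weaker assertion actually needed downstream, that the family $\left\{  \mathcal{A}^{\prime}\left(  x,y\right)  \right\}$ is a subbasis, survives. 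As written, your proof proposal eliminates this configuration by an unjustified comparability reduction and therefore has a genuine gap.
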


Next define $\Lambda^{\prime}:S_{\mathcal{G}}\longrightarrow\mathfrak
{G}_{\mathcal{G}}^{a}$, by setting $\Lambda^{\prime}\left(  x,y\right)
=\mathcal{A}^{\prime}\left(  x,y\right)  $. Since $X$ is closed and invariant
subset of $H_{\mathcal{G}}^{\left(  0\right)  }$, $\Lambda^{\prime}$ is a well
defined inverse semigroup isomorphism from $S_{\mathcal{G}}$ into the ample
inverse semigroup $\mathfrak{G}_{\mathcal{G}}^{a}$. Then $\Lambda^{\prime
}\left(  S_{\mathcal{G}}\right)  $ is an inverse subsemigroup of $\mathfrak
{G}_{\mathcal{G}}^{a}$ which is a subbasis for the topology of $\mathfrak
{G}_{\mathcal{G}}$ by Lemma \ref{subbasis}. In fact the span $W$ of
characteristic functions $1_{\mathcal{A}^{\prime}\left(  x,y\right)  }$ for
$\left(  x,y\right)  \in S_{\mathcal{G}}$, is $I$-norm dense in $C_{c}\left(
\mathfrak{G}_{\mathcal{G}}\right)  $, \cite[Proposition 2.2.7]{APB}. Let
$\left\{  s_{e},p_{A}:e\in\mathcal{G}^{1},A\in\mathcal{G}^{0}\right\}  $ be a
universal Cuntz-Krieger $\mathcal{G}$-family in $C^{\ast}\left(
\mathcal{G}\right)  $ and define a map $\psi$ on the set of generators of
$C^{\ast}\left(  \mathcal{G}\right)  $ into $C^{\ast}\left(  \mathfrak
{G}_{\mathcal{G}}\right)  $, by the equations:

\begin{enumerate}
\item $\psi\left(  s_{\alpha}p_{A}s_{\beta}^{\ast}\right)  :=1_{\mathcal{A}%
^{\prime}(x,y)}$, where $x=\left(  \alpha,r\left(  \alpha\right)  \cap
r\left(  \beta\right)  \cap A\right)  $ and $y$ = $(\beta$, $r\left(
\alpha\right)  \cap r\left(  \beta\right)  \cap A)$;

\item $\psi\left(  s_{\alpha}p_{A}\right)  :=1_{\mathcal{A}^{\prime}\left(
x,r\left(  x\right)  \right)  }$, where $x=\left(  \alpha,r\left(
\alpha\right)  \cap A\right)  $; and

\item $\psi\left(  p_{A}\right)  :=1_{\mathcal{A}^{\prime}\left(  A,A\right)
}$, $A\in\mathcal{G}^{0}$.
\end{enumerate}

Observe that $s_{\alpha}p_{A}s_{\beta}^{\ast}\neq0$ precisely when $r\left(
\alpha\right)  \cap r\left(  \beta\right)  \cap A\neq\emptyset$. Then $\psi$
extends to a surjective homomorphism which we shall denote also by $\psi$,
such that $\psi\left(  s_{e}\right)  =1_{\mathcal{A}^{\prime}\left(  \left(
e,r\left(  e\right)  \right)  ,r\left(  e\right)  \right)  }$ and $\psi\left(
p_{A}\right)  =1_{\mathcal{A}^{\prime}\left(  A,A\right)  }$. Moreover, since
we are assuming that $\mathcal{G}$ has no sinks, the inequality, $\psi\left(
p_{A}\right)  \neq0$, holds for all nonempty sets $A$ in $\mathcal{G}^{0}$.
Let $\gamma$ be the gauge action for $C^{\ast}\left(  \mathcal{G}\right)  $,
see \cite[p.7]{MTT}. For $z\in$ $\mathbb{T}$ define $\tau_{z}:C^{\ast}\left(
\mathfrak{G}_{\mathcal{G}}\right)  \longrightarrow C^{\ast}\left(  \mathfrak
{G}_{\mathcal{G}}\right)  $, by the equation, $\tau_{z}\left(  f\right)
\left(  h\right)  =z^{c\left(  h\right)  }f\left(  h\right)  $, where $f\in
C_{c}\left(  \mathfrak{G}_{\mathcal{G}}\right)  $, $h\in\mathfrak
{G}_{\mathcal{G}}$ and $c:\mathfrak{G}_{\mathcal{G}}\longrightarrow\mathbb{Z}%
$, is the cocycle defined by $c\left(  \left(  x,k,x^{\prime}\right)  \right)
=k$, $k\in\mathbb{Z}$. Notice that $\tau$ is a strongly continuous action of
$\mathbb{T}$ on $C^{\ast}\left(  \mathfrak{G}_{\mathcal{G}}\right)  $. Also by
Proposition \ref{Proposition 4.11} the collection, $\left\{  1_{\mathcal{A}%
^{\prime}\left(  A,A\right)  },1_{\mathcal{A}^{\prime}\left(  \left(
e,r\left(  e\right)  \right)  ,r\left(  e\right)  \right)  }:A\in
\mathcal{G}^{0},e\in\mathcal{G}^{1}\right\}  $, is a Cuntz-Krieger
$\mathcal{G}$- family in $C^{\ast}\left(  \mathfrak
{G}_{\mathcal{G}}\right)  $. A simple computation shows that, $\psi\circ
\gamma_{z}\left(  s_{e}\right)  =\tau_{z}\circ\psi\left(  s_{e}\right)  $ for
all $e\in\mathcal{G}^{1}$, and $\psi\circ\gamma_{z}\left(  p_{A}\right)
=\tau_{z}\circ\psi\left(  p_{A}\right)  $, for all $A\in\mathcal{G}^{0}$. Thus
the equation $\psi\circ\gamma_{z}=\tau_{z}\circ\psi$, holds for all $z\in$
$\mathbb{T}$. By the Gauge-Invariant Uniqueness Theorem for ultragraphs,
\cite[Theorem 6.8]{MTT}, $\psi$ is faithful and hence an isomorphism from
$C^{\ast}\left(  \mathcal{G}\right)  $ onto $C^{\ast}\left(  \mathfrak
{G}_{\mathcal{G}}\right)  $.

We may thus summarize our analysis to this point in the following theorem, which is a corollary to
Theorem \ref{nat.correspondence}.

\begin{theorem}\label{Isomorphism}
If $\mathcal{G}$ is an ultragraph without sinks, then $C^{\ast}\left(  \mathcal{G}\right)  \simeq C_{0}^{\ast}\left(  S_{\mathcal{G}%
}\right)  \simeq C^{\ast}\left(  \mathfrak{G}_{\mathcal{G}}\right)  $.
\end{theorem}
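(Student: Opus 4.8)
The plan is to read the result as the assembly of two separate isomorphisms joined at $C^{\ast}(\mathcal{G})$. The rightmost isomorphism $C^{\ast}(\mathcal{G})\simeq C^{\ast}(\mathfrak{G}_{\mathcal{G}})$ has in fact already been produced in the paragraph preceding the statement, where the surjection $\psi$ was shown to intertwine the gauge actions and hence, by the gauge-invariant uniqueness theorem \cite[Theorem 6.8]{MTT}, to be faithful. Consequently the only genuinely new content is the middle isomorphism $C^{\ast}(\mathcal{G})\simeq C_{0}^{\ast}(S_{\mathcal{G}})$, and this is exactly what Theorem \ref{nat.correspondence} delivers, which justifies describing the theorem as a corollary.

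First I would build a homomorphism $\Phi\colon C^{\ast}(\mathcal{G})\to C_{0}^{\ast}(S_{\mathcal{G}})$. The quotient $C_{0}^{\ast}(S_{\mathcal{G}})=C^{\ast}(S_{\mathcal{G}})/I$ carries a canonical representation of $S_{\mathcal{G}}$, and this representation lies in the class $R_{S_{\mathcal{G}}}$ by the very construction of $I$: the generators of $I$ are precisely the elements $\omega$, $(v,v)-\sum_{s(e)=v}(e,e)$ (for finite emitters $v$) and $(A\cup B,A\cup B)-(A,A)-(B,B)+(A\cap B,A\cap B)$ that conditions (i)--(iii) force to vanish. Applying the converse half of Theorem \ref{nat.correspondence} to this representation produces a Cuntz--Krieger $\mathcal{G}$-family $\{[(e,r(e))],[(A,A)]\}$ inside $C_{0}^{\ast}(S_{\mathcal{G}})$, whereupon the universal property of $C^{\ast}(\mathcal{G})$ \cite[Theorem 2.11]{MTT} hands back $\Phi$ with $\Phi(s_{e})=[(e,r(e))]$ and $\Phi(p_{A})=[(A,A)]$.

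For the inverse I would invoke the forward half of Theorem \ref{nat.correspondence}. Starting from a universal Cuntz--Krieger $\mathcal{G}$-family $\{s_{e},p_{A}\}$ in $C^{\ast}(\mathcal{G})$, the assignment $(x,y)\mapsto S_{x}S_{y}^{\ast}$, $\omega\mapsto 0$, is a $\ast$-representation $\pi_{0}$ of $S_{\mathcal{G}}$ lying in $R_{S_{\mathcal{G}}}$; hence $\pi_{0}$ annihilates $I$ and descends to $\Psi\colon C_{0}^{\ast}(S_{\mathcal{G}})\to C^{\ast}(\mathcal{G})$. A short calculation with the product rules of Definition \ref{inverse semigroup} shows $\Phi(S_{x})=[(x,r(x))]$, so that $\Phi$ and $\Psi$ invert one another on generators; both are therefore isomorphisms, giving $C^{\ast}(\mathcal{G})\simeq C_{0}^{\ast}(S_{\mathcal{G}})$. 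Composing $\Phi$ with $\psi$ then closes the stated chain.

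The step carrying the real subtlety is the verification that $\pi_{0}(x,y)=S_{x}S_{y}^{\ast}$ respects all four clauses of the multiplication in $S_{\mathcal{G}}$, since that product splits according to which of two ultrapaths is an initial segment of the other; this checking has, however, effectively been discharged already inside the proof of Theorem \ref{nat.correspondence}. One could alternatively obtain $C_{0}^{\ast}(S_{\mathcal{G}})\simeq C^{\ast}(\mathfrak{G}_{\mathcal{G}})$ directly from Paterson's machinery, using $C^{\ast}(S_{\mathcal{G}})\simeq C^{\ast}(H_{\mathcal{G}})$ together with the fact that passing to the quotient by $I$ realizes the restriction of $H_{\mathcal{G}}$ to the closed invariant subset $X$ of Proposition \ref{closure}; along that route the actual work would be to confirm that the three families of generators of $I$ cut out exactly the open invariant complement $(\mathfrak{p}\setminus Y_{\infty})\cup\{\omega\}$, invoking the description of $Y_{\infty}$ as the ultrapaths whose range is an ultraset emitting infinitely many edges. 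Either way one arrives at the same triple of isomorphisms.
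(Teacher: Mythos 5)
Your proposal is correct and takes essentially the same route as the paper: the theorem is stated there precisely as a corollary summarizing the two ingredients you use, namely the map $\psi$ made faithful by the gauge-invariant uniqueness theorem for $C^{\ast}\left(\mathcal{G}\right)\simeq C^{\ast}\left(\mathfrak{G}_{\mathcal{G}}\right)$, and the correspondence of Theorem \ref{nat.correspondence} combined with the universal properties of $C^{\ast}\left(\mathcal{G}\right)$ and $C_{0}^{\ast}\left(S_{\mathcal{G}}\right)$ for the middle isomorphism. Your explicit construction of the mutually inverse maps $\Phi$ and $\Psi$ merely fills in routine details the paper leaves implicit, so no further comment is needed.
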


\section{{\protect\small ULTRAGRAPH GROUPOIDS ARE AMENABLE}}

Let $G$ be a discrete group, let $\mathcal{G}=\left(  G^{0},\mathcal{G}%
^{1},r,s\right)  $ be an ultragraph and let $\varphi:\mathcal{G}%
^{1}\longrightarrow$ $G$ be a function. We introduce an analog of the skew
product graph considered in \cite{kp}; the resulting object, which we denote
by $\mathcal{G}\times_{\varphi}G$, is also an ultragraph. We show that the
crossed product of $C^{\ast}\left(  \mathcal{G}\right)  $ by the gauge action,
$C^{\ast}\left(  \mathcal{G}\right)  \rtimes_{\gamma}\mathbb{T}$, is
isomorphic to $C^{\ast}\left(  \mathcal{G}\times_{\varphi}\mathbb{Z}\right)
$, where $\varphi$ is the constant function $1$ on $\mathcal{G}^{1}$. In this
case, we shall write $\mathcal{G}\times_{\varphi}\mathbb{Z}$ as $\mathcal{G}%
\times_{1}\mathbb{Z}$ . It turns out that the ultragraph $\mathcal{G}%
\times_{1}\mathbb{Z}$ has no loops and so by Theorem 4.1 in \cite{MTPS},
$C^{\ast}\left(  \mathcal{G}\times_{1}\mathbb{Z}\right)  $ is an AF-algebra.
It will then follow that $C^{\ast}\left(  \mathcal{G}\right)  \rtimes_{\gamma
}\mathbb{T}$ is AF and, consequently, that $\mathfrak{G}_{\mathcal{G}}$ is amenable.

\begin{definition}
Let $G$ be a discrete group and let $\mathcal{G}=\left(  G^{0},\mathcal{G}%
^{1},r,s\right)  $ an ultragraph. Given a function $\varphi:\mathcal{G}%
^{1}\longrightarrow G$ then the \textbf{skew product ultragraph}
$\mathcal{G}\times_{\varphi}G$ is defined as follows: the set of vertices is
$G^{0}\times G$, the set of edges is $\mathcal{G}^{1}\times G$ and the
structure maps $s^{\prime}:\mathcal{G}^{1}\times G\longrightarrow
\mathcal{G}^{0}\times G$ and $r^{\prime}:\mathcal{G}^{1}\times
G\longrightarrow P\left(  \mathcal{G}^{0}\times G\right)  $, are defined by
the equations,
\[
s^{\prime}\left(  e,g\right)  =\left(  s\left(  e\right)  ,g\right)  \text{
and }r^{\prime}\left(  e,g\right)  =r\left(  e\right)  \times\left\{
g\varphi\left(  e\right)  \right\}  \text{.}%
\]
We write $\mathcal{G}\times_{\varphi}G$ for $\left(  G^{0}\times
G,\mathcal{G}^{1}\times G,r^{\prime},s^{\prime}\right)  $. 
\end{definition}

It is clear that
$\mathcal{G}\times_{\varphi}G$ is an ultragraph.

\begin{remark}
\label{singular}We note that the ultragraph $\mathcal{G}$ has no singular
vertices if and only if the skew ultragraph $\mathcal{G}\times_{\varphi}G$ has
no singular vertices.
\end{remark}

\begin{proof}
This follows from the fact that, for any $v\in G^{0}$ and any $g\in G$,
$\left(  s^{\prime}\right)  ^{-1}\left(  v,g\right)  =s^{-1}\left(  v\right)
\times\left\{  g\right\}  $.
\end{proof}

We may assume that $\mathcal{G}$ has no singular vertices, i.e. no vertices
which are infinite emitters. The reason for this is that given an ultragraph
$\mathcal{G}$, one may build a new ultragraph $\mathcal{F}$ that has no
singular vertices \cite[Prop.6.2, p.17]{MTT} such that $C^{\ast}\left(
\mathcal{G}\right)  $ is strongly Morita equivalent to $C^{\ast}\left(
\mathcal{F}\right)  $. ($\mathcal{F}$ is called the \emph{desingularization}
of $\mathcal{G}$.) Further, since $C^{\ast}\left(  \mathcal{F}\right)  $ is
AF, as we shall see, and since the property of being AF is preserved under
strong Morita equivalence, we may conclude that $C^{\ast}\left(
\mathcal{G}\right)  $ is AF. See also \cite[Proposition 6.6.]{MTT}.

\begin{proposition}
If $\mathcal{G}$ is an ultragraph with no singular vertices, then the unit
space $\mathfrak{G}_{\mathcal{G}}^{\left(  0\right)  }$ of its groupoid model
$\mathfrak{G}_{\mathcal{G}}$ becomes $\mathfrak{p}^{\infty}$, where
$\mathfrak{p}^{\infty}$ denotes the infinite path space of $\mathcal{G}$.
\end{proposition}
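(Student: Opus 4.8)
The plan is to trace the definition of $\mathfrak{G}_{\mathcal{G}}$ back to its unit space and then eliminate the finite ultrapaths. Since $\mathfrak{G}_{\mathcal{G}}=H_{\mathcal{G}}\vert_{X}$ is the reduction of $H_{\mathcal{G}}$ to the closed invariant set $X=Y_{\infty}\cup\mathfrak{p}^{\infty}\subseteq H_{\mathcal{G}}^{(0)}$, and the unit space of a reduction of a groupoid to a subset of its unit space is exactly that subset, I would first record that $\mathfrak{G}_{\mathcal{G}}^{(0)}=X=Y_{\infty}\cup\mathfrak{p}^{\infty}$. Because $Y_{\infty}\subseteq\mathfrak{p}$ consists of finite ultrapaths while $\mathfrak{p}\cap\mathfrak{p}^{\infty}=\emptyset$, the desired identity $\mathfrak{G}_{\mathcal{G}}^{(0)}=\mathfrak{p}^{\infty}$ is equivalent to $Y_{\infty}=\emptyset$. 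Thus the whole statement reduces to showing that, in the absence of singular vertices, no ultrapath lies in $Y_{\infty}$.

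I would then argue by contradiction: suppose there is $y\in Y_{\infty}$. By the definition of $Y_{\infty}$ its range $r\left(y\right)$ is an ultraset, i.e. $r\left(y\right)\in\mathcal{U}\left(\mathcal{G}^{0}\right)$, whose edge set $\varepsilon\left(r\left(y\right)\right)$ is infinite. The heart of the matter is to show that such an ultraset must be a single vertex. Here I would use that every singleton $\left\{w\right\}$, $w\in G^{0}$, lies in $\mathcal{G}^{0}$, together with the fact that the principal filter $\widetilde{r\left(y\right)}$ is an ultrafilter on $G^{0}$: for each $D\in\mathcal{G}^{0}$ this forces either $r\left(y\right)\subseteq D$ (that is, $D\in\widetilde{r\left(y\right)}$) or $r\left(y\right)\cap D=\emptyset$, exactly the dichotomy exploited in the proof of Proposition \ref{closure}. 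Applying it to $D=\left\{w\right\}$ for any $w\in r\left(y\right)$, the disjointness alternative is impossible, so $r\left(y\right)\subseteq\left\{w\right\}$ and hence $r\left(y\right)=\left\{w\right\}$.

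With $r\left(y\right)=\left\{v\right\}$ a singleton, the hypothesis that $\varepsilon\left(r\left(y\right)\right)$ is infinite becomes $\left\vert s^{-1}\left(v\right)\right\vert=\left\vert\varepsilon\left(\left\{v\right\}\right)\right\vert=\infty$, so $v$ is an infinite emitter and therefore a singular vertex. This contradicts the standing hypothesis that $\mathcal{G}$ has no singular vertices. Consequently $Y_{\infty}=\emptyset$, and therefore $\mathfrak{G}_{\mathcal{G}}^{(0)}=X=\mathfrak{p}^{\infty}$.

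The step I expect to be the crux is reducing an infinite-emitter ultraset to a single infinite-emitter vertex, i.e. ruling out ``large'' ultrasets. It is precisely here that one must use that $\widetilde{r\left(y\right)}$ is a genuine ultrafilter on $G^{0}$ — giving decisiveness/complementation against arbitrary $D\in\mathcal{G}^{0}$ — rather than merely a prime filter of the lattice $\mathcal{G}^{0}$; a prime-filter reading would permit infinite indecomposable ultrasets emitting infinitely many edges while containing no infinite-emitter vertex, and the proposition could then fail. Everything else (identifying the unit space of the reduction and the bookkeeping relating $\varepsilon(\{v\})$ to $s^{-1}(v)$) is routine.
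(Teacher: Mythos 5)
Your proof is correct on the paper's literal definition of an ultraset (the principal filter $\widetilde{r(y)}$ is an ultrafilter over $G^{0}$, so every $D\in\mathcal{G}^{0}$ either contains $r(y)$ or is disjoint from it), but it is a genuinely different argument from the paper's. The paper splits into the cases $|y|>0$ and $|y|=0$: in the first it uses the density statement of Proposition \ref{closure} to produce infinitely many infinite paths through the neighborhood $\mathcal{A}^{\prime}(\{s(y)\},\{s(y)\})$ and concludes that $s(y)$ is an infinite emitter; in the second it shows $A=r(y)$ must be infinite, covers $A$ by finitely many ranges $r(e_{i})$ plus a finite set via Lemma 2.12 of \cite{MTT}, applies Samuel's Theorem V to get $A\subseteq r(e_{i})$, and feeds the positive-length ultrapath $(e_{i},A)$ back into the first case. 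You bypass the topology, the case split, and the structure theorem entirely: singletons lie in $\mathcal{G}^{0}$, so the dichotomy applied to $D=\{w\}$, $w\in r(y)$, collapses $r(y)$ to $\{w\}$, and then $\varepsilon(r(y))=s^{-1}(w)$ infinite makes $w$ singular. Your version is not only shorter; it puts the contradiction where it belongs. The paper's key inference in its first case --- that infinitely many infinite paths based at $s(y)$ force $s(y)$ to be an infinite emitter --- is a non sequitur (a vertex emitting a single edge into a vertex hosting two loops already supports infinitely many infinite paths); what convergence to the finite ultrapath $y$ actually yields is that $r(y)$ emits infinitely many edges, which is the hypothesis, so one needs $r(y)$ to be concentrated on finitely many vertices, and that is exactly what your singleton step supplies.

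Your closing caveat is substantive, not a formality. Take $G^{0}=\{u,v_{1},v_{2},\ldots\}$, one edge $e$ with $s(e)=u$, $r(e)=\{v_{1},v_{2},\ldots\}$, and for each $i$ one edge $f_{i}$ with $s(f_{i})=v_{i}$, $r(f_{i})=\{v_{i}\}$. No vertex is singular, yet the infinite paths $ef_{k}f_{k}\cdots$ converge in $\widehat{E(S_{\mathcal{G}})}$ to the finite ultrapath $(e,r(e))$, so $\overline{\mathfrak{p}^{\infty}}\supsetneq\mathfrak{p}^{\infty}$. Consequently this proposition and Proposition \ref{closure} cannot both hold in that example: on the weak (join-prime) reading of ultraset, $(e,r(e))\in Y_{\infty}$ and the present proposition fails; on the strong (dichotomy) reading that both you and the paper's proof of Proposition \ref{closure} invoke, $Y_{\infty}=\emptyset$ but then $X=Y_{\infty}\cup\mathfrak{p}^{\infty}$ is not closed. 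So your argument is the right proof of the proposition as stated, and the tension you flag lies in the paper, not in your proof.
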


\begin{proof}
The unit space, $\mathfrak{G}_{\mathcal{G}}^{\left(  0\right)  }$, of
$\mathfrak{G}_{\mathcal{G}}$ is the set $Y_{\infty}\cup\mathfrak{p}^{\infty
}=\overline{\mathfrak{p}^{\infty}}$. Take any $y$ in $Y_{\infty}$. If
$0<\left|  y\right|  $, then the set, $\mathcal{A}^{\prime}\left(  \left\{
s\left(  y\right)  \},\{ s\left(  y\right)  \right\}  \right)  $, is an open
subset of $\mathfrak{G}_{\mathcal{G}}^{\left(  0\right)  }$ which contains
$y$. Thus there is an infinite sequence of infinite paths $\gamma_{k}$ in
$\mathfrak
{p}^{\infty}$ such that $s\left(  \gamma_{k}\right)  =s\left(  y\right)  $ for
large $k$. Thus, the vertex, $s\left(  y\right)  $, is an infinite emitter and
hence a singular vertex in $\mathcal{G}$, contrary to hypothesis. If $\left|
y\right|  =0$, say $y=A$ in $\mathcal{G}^{0}$, then set $\varepsilon\left(
A\right)  $ is infinite and $\widetilde{A}$ is an ultrafilter over $G^{0}$.
(See Proposition \ref{closure}) Therefore the set $A$ must be infinite.
(Otherwise $\mathcal{G}$ would have a singular vertex as well.) By Lemma 2.12
in \cite{MTT}, there are finite subsets $Y_{1},\ldots,Y_{n}$ of $\mathcal{G}%
^{1}$ and a finite subset $F$ of $G^{0}$ such that $A=\bigcap\limits_{e\in
Y_{1}}r(e)\cup\ldots\cup\bigcap\limits_{e\in Y_{n}}r(e)\cup F$. Furthermore,
$F$ may be chosen to be disjoint from $\bigcap\limits_{e\in Y_{1}}%
r(e)\cup\ldots\cup\bigcap\limits_{e\in Y_{n}}r(e)$. So there is a finite
number of edges $e_{1},\ldots,e_{n}$ in the sets $Y_{1},\ldots,Y_{n}$,
respectively, such that $A\subseteq r\left(  e_{1}\right)  \cup\ldots\cup
r\left(  e_{n}\right)  $. Therefore the set, $r\left(  e_{1}\right)
\cup\ldots\cup r\left(  e_{n}\right)  $, lies in the ultrafilter,
$\widetilde{A}$. Hence by Theorem V in \cite{Samuel}, we have, $r\left(
e_{i}\right)  \in\widetilde{A}$ for some $i\in\left\{  1,\ldots,n\right\}  $.
Thus the inclusion, $A\subseteq r\left(  e_{i}\right)  $, holds. But then, the
ultrapath, $\left(  e_{i},A\right)  $, would be in $Y_{\infty}$. Then, as in
the case when the length of $y$, $\left|  y\right|  $, is positive, the
vertex, $s(e_{i})$, will contradict the condition that $\mathcal{G}$ has no
singular vertices. Therefore $Y_{\infty}=\emptyset$ and hence $\mathfrak
{G}_{\mathcal{G}}^{\left(  0\right)  }=\mathfrak{p}^{\infty}$.
\end{proof}

Recall that the position cocycle $c:\mathfrak{G}_{\mathcal{G}}\longrightarrow
\mathbb{Z}$ is given by the formula $c\left(  \chi,k,\chi^{\prime}\right)
=k$, $k\in\mathbb{Z}$. In the following theorem we show that the skew product
groupoid obtained from $c$, $\mathfrak{G}_{\mathcal{G}}\times_{c}\mathbb{Z}$,
(see \cite{Rena}) is the same as the path groupoid $\mathfrak{G}%
_{\mathcal{G}\times_{1}\mathbb{Z}}$ of the skew ultragraph $\mathcal{G}%
\times_{1}\mathbb{Z}$.

\begin{theorem}
\label{skiso}Let $\mathcal{G}=\left(  G^{0},\mathcal{G}^{1},r,s\right)  $ be
an ultragraph with no singular vertices. Let $G=\left(  \mathbb{Z},+\right)  $
be the discrete group of the integers under addition and let $\varphi
:\mathcal{G}^{1}\longrightarrow\mathbb{Z}$ be the function defined by
$\varphi\left(  e\right)  =1$ for $e\in\mathcal{G}^{1}$. Then the groupoid
model $\mathfrak{G}_{\mathcal{G}\times_{1}\mathbb{Z}}$ for $\mathcal{G}%
\times_{1}\mathbb{Z}$ \ is isomorphic to the skew product groupoid $\mathfrak
{G}_{\mathcal{G}}\times_{c}\mathbb{Z}$, where $c:\mathfrak{G}_{\mathcal{G}%
}\longrightarrow\mathbb{Z}$ is the position cocycle on $\mathfrak
{G}_{\mathcal{G}}$.
\end{theorem}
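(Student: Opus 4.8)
The plan is to write down an explicit bijection between the two groupoids and then verify that it intertwines both the algebraic operations and the topologies. Because neither $\mathcal{G}$ nor, by Remark~\ref{singular}, $\mathcal{G}\times_{1}\mathbb{Z}$ has singular vertices, the preceding Proposition identifies the unit spaces of $\mathfrak{G}_{\mathcal{G}}$ and $\mathfrak{G}_{\mathcal{G}\times_{1}\mathbb{Z}}$ with the respective infinite path spaces. The first thing I would record is that an infinite path in $\mathcal{G}\times_{1}\mathbb{Z}$ is the same datum as a pair $(\gamma,n)$ with $\gamma=e_{1}e_{2}\cdots\in\mathfrak{p}^{\infty}$ and $n\in\mathbb{Z}$: the defining condition $s'(e_{i+1},g_{i+1})\in r'(e_{i},g_{i})$ forces $s(e_{i+1})\in r(e_{i})$ and $g_{i+1}=g_{i}+1$, so the edge labels are $g_{i}=n+i-1$ and are determined by the starting label $n=g_{1}$. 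Thus $\mathfrak{p}^{\infty}(\mathcal{G}\times_{1}\mathbb{Z})\cong\mathfrak{p}^{\infty}\times\mathbb{Z}$, which is exactly the unit space $\mathfrak{p}^{\infty}\times\mathbb{Z}$ of the skew product $\mathfrak{G}_{\mathcal{G}}\times_{c}\mathbb{Z}$.

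Using Theorem~\ref{univ.groupoid}, a general element of $\mathfrak{G}_{\mathcal{G}\times_{1}\mathbb{Z}}$ is a triple $((\gamma,n),k,(\gamma',n'))$ of tail-equivalent infinite paths. Writing the finite initial segments as $(x,n)$ and $(y,n')$ with common tail $\mu$, the range condition $r((x,n))=r((y,n'))$ and the matching of tails both reduce to $r(x)=r(y)$ together with $n+|x|=n'+|y|$; since $k=|x|-|y|$ this says precisely that $(\gamma,k,\gamma')\in\mathfrak{G}_{\mathcal{G}}$ and $n'=n+k$. Hence I would define
\[
\Phi\big((\gamma,k,\gamma'),n\big):=\big((\gamma,n),\,k,\,(\gamma',n+k)\big),
\]
and the computation above shows $\Phi$ is a bijection of $\mathfrak{G}_{\mathcal{G}}\times_{c}\mathbb{Z}$ onto $\mathfrak{G}_{\mathcal{G}\times_{1}\mathbb{Z}}$ (the constraint $n'=n+k$ is forced by membership in the target, so no elements are missed or doubly counted). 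With the skew-product conventions $r(h,n)=(r(h),n)$ and $s(h,n)=(s(h),n+c(h))$ of \cite{Rena}, two elements $((\gamma,k,\gamma'),n)$ and $((\gamma',k',\gamma''),m)$ are composable exactly when $m=n+k$, with product $((\gamma,k+k',\gamma''),n)$; applying $\Phi$ to each factor and multiplying in $\mathfrak{G}_{\mathcal{G}\times_{1}\mathbb{Z}}$ via Theorem~\ref{univ.groupoid} yields $((\gamma,n),k+k',(\gamma'',n+k+k'))$, which is $\Phi$ of the product. The identity $\Phi((h,n)^{-1})=\Phi(h,n)^{-1}$ is checked the same way, so $\Phi$ is an algebraic isomorphism.

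Finally I would show $\Phi$ is a homeomorphism. By Lemma~\ref{subbasis} the compact open sets $\mathcal{A}'(x,y)$ are closed under finite intersection and hence form a basis for $\mathfrak{G}_{\mathcal{G}}$, so the sets $\mathcal{A}'(x,y)\times\{n\}$ form a basis for the skew product; unwinding the definitions gives
\[
\Phi\big(\mathcal{A}'(x,y)\times\{n\}\big)=\mathcal{A}'\big((x,n),(y,n+|x|-|y|)\big),
\]
a compact open set of $\mathfrak{G}_{\mathcal{G}\times_{1}\mathbb{Z}}$, so $\Phi$ is open. For continuity I would run this in reverse on the subbasic sets $\mathcal{A}'(\tilde{x},\tilde{y})$ of the target, and here lies the one genuine subtlety, which I expect to be the main obstacle. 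The generalized-vertex lattice $(\mathcal{G}\times_{1}\mathbb{Z})^{0}$ contains \emph{multi-level} sets of the form $\bigcup_{g}A_{g}\times\{g\}$ with $A_{g}\in\mathcal{G}^{0}$, whereas paths of $\mathcal{G}\times_{1}\mathbb{Z}$ live on a single level. One must check that whenever a pair $(\tilde{x},\tilde{y})\in S_{\mathcal{G}\times_{1}\mathbb{Z}}$ has a factor of positive length, the constraint $\tilde{A}\subseteq r(\tilde{x})$ forces both $\tilde{x}$ and $\tilde{y}$ onto a single level, so that $(\tilde{x},\tilde{y})=((x,n),(y,n+|x|-|y|))$ and $\Phi^{-1}(\mathcal{A}'(\tilde{x},\tilde{y}))=\mathcal{A}'(x,y)\times\{n\}$ is basic open; multi-level sets can occur only for the length-zero idempotents $\tilde{x}=\tilde{y}=\tilde{A}$, and for these $\Phi^{-1}(\mathcal{A}'(\tilde{A},\tilde{A}))=\bigsqcup_{g}\mathcal{A}'(A_{g},A_{g})\times\{g\}$ is visibly open. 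This level-by-level accounting of $(\mathcal{G}\times_{1}\mathbb{Z})^{0}$—rather than the algebra, which is routine—is the part requiring care, and it is precisely the incarnation in this setting of the difficulty flagged in Remark~\ref{principal difficulty}.
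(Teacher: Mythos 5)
Your proposal is correct and follows essentially the same route as the paper: you identify the unit space of $\mathfrak{G}_{\mathcal{G}\times_{1}\mathbb{Z}}$ with $\mathfrak{p}^{\infty}\times\mathbb{Z}$ exactly as the paper does via its map $f$, and your $\Phi$ is precisely the paper's map $\phi\left[((\gamma,m-l,\gamma'),n)\right]=\left((\gamma,n),m-l,(\gamma',n+m-l)\right)$ with $k=m-l$. The only difference is one of completeness rather than of method: the paper checks well-definedness and then defers the remaining verifications to \cite[Theorem 2.4]{kp}, whereas you carry them out explicitly, and your single-level versus multi-level analysis of the sets $\bigcup_{g}A_{g}\times\{g\}$ in $(\mathcal{G}\times_{1}\mathbb{Z})^{0}$ is indeed the genuinely ultragraph-specific point that the graph-case argument of \cite{kp} does not address.
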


\begin{proof}
Since $\mathcal{G}$ has no singular vertices, we may identify the unit space
of $\mathfrak{G}_{\mathcal{G}}$ with $\mathfrak{p}^{\infty}$ by Proposition
\ref{closure}. Next, we identify the unit space $\mathfrak{p}_{\mathcal{G}%
\times_{1}\mathbb{Z}}^{\infty}$ of $\mathfrak{G}_{\mathcal{G}\times
_{1}\mathbb{Z}}$ with the unit space $\mathfrak{p}^{\infty}\times\mathbb{Z}$
of $\mathfrak{G}_{\mathcal{G}}\times_{c}\mathbb{Z}$ as follows: for $\left(
\gamma,m\right)  \in\mathfrak{p}^{\infty}\times\mathbb{Z}$, define
$f:\mathfrak{p}^{\infty}\times\mathbb{Z}\longrightarrow\mathfrak
{p}_{\mathcal{G}\times_{1}\mathbb{Z}}^{\infty}$ by
\[
f\left(  \gamma,m\right)  :=\left(  e_{1},m\right)  \left(  e_{2},m+1\right)
\ldots\text{,}%
\]
for $\gamma=e_{1}e_{2}\ldots$. It is straightforward to check that this
defines an infinite path in $\mathcal{G}\times_{1}\mathbb{Z}$. Define a shift
$\sigma:\mathfrak{p}^{\infty}\longrightarrow\mathfrak{p}^{\infty}$ by the
formula,
\[
\sigma\left(  \gamma\right)  =e_{2}e_{3}\ldots\text{, }\gamma=e_{1}e_{2}%
e_{3}\ldots\in\mathfrak{p}^{\infty}\text{,}%
\]
and define another shift $\widetilde{\sigma}:\mathfrak{p}^{\infty}%
\times\mathbb{Z}\longrightarrow\mathfrak{p}^{\infty}\times\mathbb{Z}$ by the
formula, $\widetilde{\sigma}\left(  \gamma,n\right)  =\left(  \sigma\left(
\gamma\right)  ,n+1\right)  $. Under this identification the groupoid model
$\mathfrak{G}_{\mathcal{G}}$ for $\mathcal{G}$ is given by the equation,
\[
\mathfrak{G}_{\mathcal{G}}=\{\left(  \gamma,m-l,\gamma^{\prime}\right)
:\gamma,\gamma^{\prime}\in\mathfrak{p}^{\infty},\sigma^{m}\left(
\gamma\right)  =\sigma^{l}\left(  \gamma^{\prime}\right)  \}\text{,}%
\]
while the groupoid model $\mathfrak{G}_{\mathcal{G}\times_{1}\mathbb{Z}}$ for
$\mathcal{G}\times_{1}\mathbb{Z}$ is given by the equation,
\[
\mathfrak{G}_{\mathcal{G}\times_{1}\mathbb{Z}}=\{\left(  \left(
\gamma,n\right)  ,m-l,\left(  \gamma^{\prime},k\right)  \right)
:\gamma,\gamma^{\prime}\in\mathfrak{p}^{\infty},\widetilde{\sigma}^{m}\left(
\gamma,n\right)  =\widetilde{\sigma}^{l}\left(  \gamma^{\prime},k\right)
,n,k\in\mathbb{Z}\}\text{.}%
\]
Define a map $\phi:\mathfrak{G}_{\mathcal{G}}\times_{1}\mathbb{Z}%
\longrightarrow\mathfrak{G}_{\mathcal{G}\times_{1}\mathbb{Z}}$ as follows: for
$\gamma$ and $\gamma^{\prime}\in\mathfrak{p}^{\infty}$ with $\sigma^{m}\left(
\gamma\right)  =\sigma^{l}\left(  \gamma^{\prime}\right)  $ and $n\in
\mathbb{Z}$, set
\[
\phi\left[  (\left(  \gamma,m-l,\gamma^{\prime}\right)  ,n)\right]  :=\left(
\left(  \gamma,n\right)  ,m-l,\left(  \gamma^{\prime},n+m-l\right)  \right)
\text{.}%
\]
Note that
\begin{align*}
\widetilde{\sigma}^{m}\left(  \gamma,n\right)   &  =\left(  \sigma^{m}\left(
\gamma\right)  ,n+m\right) \\
&  =\left(  \sigma^{l}\left(  \gamma^{\prime}\right)  ,n+m\right) \\
&  =\left(  \sigma^{l}\left(  \gamma^{\prime}\right)  ,(n+m-l)+l\right) \\
&  =\widetilde{\sigma}^{l}\left(  \gamma^{\prime},n+m-l\right)  \text{,}%
\end{align*}
hence $\left(  \left(  \gamma,n\right)  ,m-l,\left(  \gamma^{\prime
},n+m-l\right)  \right)  \in\mathfrak{G}_{\mathcal{G}\times_{1}\mathbb{Z}}$.
So $\phi$ is well defined. The rest of the proof proceeds as in \cite[Theorem
2.4]{kp}.
\end{proof}

In order to show that $C^{\ast}\left(  \mathcal{G}\right)  \rtimes_{\gamma
}\mathbb{T}$ is AF, we need the following lemma.

\begin{lemma}
\label{af} Let $\mathcal{G}=\left(  G^{0},\mathcal{G}^{1},r,s\right)  $ be an
ultragraph. Let $G=\left(  \mathbb{Z},+\right)  $ be the discrete group of the
integers under addition and let $\varphi:\mathcal{G}^{1}\longrightarrow
\mathbb{Z}$ be the function defined by $\varphi\left(  e\right)  =1$, for all
$e\in\mathcal{G}^{1}$. Then the ultragraph $C^{\ast}$-algebra, $C^{\ast
}\left(  \mathcal{G}\times_{1}\mathbb{Z}\right)  $ is an AF-algebra.
\end{lemma}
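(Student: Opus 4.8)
The plan is to reduce the statement to the known fact that the $C^{\ast}$-algebra of a loop-free ultragraph is AF. Concretely, I would show that the skew product ultragraph $\mathcal{G}\times_{1}\mathbb{Z}$ contains no loops and then invoke Theorem 4.1 of \cite{MTPS} to conclude that $C^{\ast}\left(  \mathcal{G}\times_{1}\mathbb{Z}\right)  $ is AF. Note that this lemma makes no hypothesis on singular vertices, and indeed the argument will not need one.

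First I would recall that a loop in an ultragraph is a path $\beta=f_{1}\ldots f_{k}$ of positive length with $s\left(  \beta\right)  \in r\left(  \beta\right)  $, that is, with $s\left(  f_{1}\right)  \in r\left(  f_{k}\right)  $. The heart of the matter is then a bookkeeping argument on the $\mathbb{Z}$-coordinate. By definition of the skew product (with $\varphi\equiv1$), a generic edge $\left(  e,n\right)  \in\mathcal{G}^{1}\times\mathbb{Z}$ has source $s^{\prime}\left(  e,n\right)  =\left(  s\left(  e\right)  ,n\right)  $ and range $r^{\prime}\left(  e,n\right)  =r\left(  e\right)  \times\left\{  n+1\right\}  $. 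Consequently, if $\left(  e_{1},n_{1}\right)  \left(  e_{2},n_{2}\right)  \ldots\left(  e_{k},n_{k}\right)  $ is a path in $\mathcal{G}\times_{1}\mathbb{Z}$, then the defining condition $s^{\prime}\left(  e_{i+1},n_{i+1}\right)  \in r^{\prime}\left(  e_{i},n_{i}\right)  $ forces $s\left(  e_{i+1}\right)  \in r\left(  e_{i}\right)  $ together with $n_{i+1}=n_{i}+1$. Thus the $\mathbb{Z}$-coordinate increases by exactly one along each edge of any path.

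The key step, which is short once the grading is made explicit, is that this strict monotonicity rules out loops. For a loop we would need the source vertex $\left(  s\left(  e_{1}\right)  ,n_{1}\right)  $ to lie in the terminal range set $r^{\prime}\left(  e_{k},n_{k}\right)  =r\left(  e_{k}\right)  \times\left\{  n_{k}+1\right\}  =r\left(  e_{k}\right)  \times\left\{  n_{1}+k\right\}  $; comparing $\mathbb{Z}$-coordinates yields $n_{1}=n_{1}+k$, hence $k=0$, contradicting $k\geq1$. Therefore $\mathcal{G}\times_{1}\mathbb{Z}$ has no loops, and applying Theorem 4.1 of \cite{MTPS} to this loop-free ultragraph gives that $C^{\ast}\left(  \mathcal{G}\times_{1}\mathbb{Z}\right)  $ is AF.

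I expect no real obstacle here: the only substantive content is the $\mathbb{Z}$-grading computation, which is immediate, and the rest is an appeal to an external theorem. If one wanted to be fully self-contained, the harder (but already cited and hence assumable) ingredient would be the no-loops-implies-AF result itself for ultragraph algebras; within the present paper it is legitimately used as a black box.
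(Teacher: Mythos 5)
Your proposal is correct and follows exactly the paper's proof: the paper likewise observes that $\mathcal{G}\times_{1}\mathbb{Z}$ has no loops and then invokes Theorem 4.1 of \cite{MTPS} to conclude that $C^{\ast}\left(\mathcal{G}\times_{1}\mathbb{Z}\right)$ is AF. The only difference is that you spell out the $\mathbb{Z}$-grading computation behind the no-loops observation, which the paper states without proof.
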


\begin{proof}
Observe that the ultragraph, $\mathcal{G}\times_{1}\mathbb{Z}$, has no loops.
Thus by Theorem 4.1, in \cite{MTPS}, $C^{\ast}\left(  \mathcal{G}\times
_{1}\mathbb{Z}\right)  $ is an AF-algebra.
\end{proof}

\begin{corollary}
\label{amena} Let $\mathcal{G}=\left(  G^{0},\mathcal{G}^{1},r,s\right)  $ be
an ultragraph, with no singular vertices. Then the groupoid $C^{\ast}%
$-algebra, $C^{\ast}\left(  \mathfrak{G}_{\mathcal{G}}\times_{c}%
\mathbb{Z}\right)  $ is an AF-algebra. Furthermore, $C^{\ast}\left(
\mathfrak{G}_{\mathcal{G}}\times_{c}\mathbb{Z}\right)  $ is nuclear and hence
$\mathfrak{G}_{\mathcal{G}}\times_{c}\mathbb{Z}$ is amenable.
\end{corollary}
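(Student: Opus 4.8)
The plan is to obtain everything from the structural isomorphisms already in hand, reducing the AF assertion to Lemma~\ref{af}. First I would run the chain
\[
C^{\ast}(\mathfrak{G}_{\mathcal{G}}\times_c\mathbb{Z})\;\cong\;C^{\ast}(\mathfrak{G}_{\mathcal{G}\times_1\mathbb{Z}})\;\cong\;C^{\ast}(\mathcal{G}\times_1\mathbb{Z}).
\]
The first isomorphism is immediate from Theorem~\ref{skiso}, since the map $\phi$ constructed there is an isomorphism of topological groupoids, and isomorphic $r$-discrete Hausdorff groupoids have isomorphic groupoid $C^{\ast}$-algebras. For the second, note that by Remark~\ref{singular} the skew ultragraph $\mathcal{G}\times_1\mathbb{Z}$ inherits from $\mathcal{G}$ the property of having no singular vertices, and in particular no sinks; hence Theorem~\ref{Isomorphism}, applied to the ultragraph $\mathcal{G}\times_1\mathbb{Z}$, supplies $C^{\ast}(\mathcal{G}\times_1\mathbb{Z})\cong C^{\ast}(\mathfrak{G}_{\mathcal{G}\times_1\mathbb{Z}})$. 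By Lemma~\ref{af} the algebra $C^{\ast}(\mathcal{G}\times_1\mathbb{Z})$ is AF, so $C^{\ast}(\mathfrak{G}_{\mathcal{G}}\times_c\mathbb{Z})$ is AF as well, which is the first assertion.

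Nuclearity is then automatic, since an AF-algebra is an inductive limit of finite-dimensional $C^{\ast}$-algebras and is therefore nuclear. For the final clause I would pass from nuclearity of the algebra to amenability of the groupoid. Because the full algebra $C^{\ast}(\mathfrak{G}_{\mathcal{G}}\times_c\mathbb{Z})$ is nuclear and the reduced algebra is a quotient of it, the reduced algebra is nuclear too; then, invoking the correspondence of Anantharaman-Delaroche and Renault between nuclearity of the $C^{\ast}$-algebra of a second countable $r$-discrete Hausdorff groupoid and the (topological) amenability of that groupoid, one concludes that $\mathfrak{G}_{\mathcal{G}}\times_c\mathbb{Z}$ is amenable. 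One should verify that the standing hypotheses hold here, which they do: $\mathfrak{G}_{\mathcal{G}}\times_c\mathbb{Z}$ is $r$-discrete with a counting-measure Haar system and is Hausdorff (by Theorem~\ref{univ.groupoid} and the construction of $\mathfrak{G}_{\mathcal{G}}$), and it is second countable because $G^{0}$ and $\mathcal{G}^{1}$ are countable.

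I expect the passage from nuclearity to amenability to be the main obstacle, since it is the only step that is not a formal consequence of the isomorphisms above but rests on an external structural theorem. A more robust and self-contained alternative, which I would fall back on if the cited equivalence is to be avoided, is to use the loop-freeness of $\mathcal{G}\times_1\mathbb{Z}$ (already exploited in Lemma~\ref{af}) to exhibit $\mathfrak{G}_{\mathcal{G}\times_1\mathbb{Z}}$, and hence $\mathfrak{G}_{\mathcal{G}}\times_c\mathbb{Z}$, directly as an AF groupoid, that is, an increasing union of open proper (elementary) subgroupoids. Such groupoids are amenable by construction, and this also recovers the nuclearity of its $C^{\ast}$-algebra as a by-product.
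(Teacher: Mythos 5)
Your proof is correct and takes essentially the same route as the paper's: the paper's own (very terse) proof is exactly the chain $C^{\ast}(\mathfrak{G}_{\mathcal{G}}\times_c\mathbb{Z})\cong C^{\ast}(\mathfrak{G}_{\mathcal{G}\times_1\mathbb{Z}})\cong C^{\ast}(\mathcal{G}\times_1\mathbb{Z})$ via Theorem \ref{skiso} and Theorem \ref{Isomorphism}, then Lemma \ref{af}, then AF $\Rightarrow$ nuclear $\Rightarrow$ amenable. You have merely made explicit the steps the paper leaves implicit (Remark \ref{singular} to justify applying the groupoid model to the skew ultragraph, the full-versus-reduced point, and the Anantharaman-Delaroche--Renault equivalence for the final implication).
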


\begin{proof}
The proof follows from Theorem \ref{skiso} and Lemma \ref{af}.
\end{proof}

\begin{theorem}
If $\mathcal{G}$ is an ultragraph with no singular vertices, then $C^{\ast
}\left(  \mathcal{G}\right)  \rtimes_{\gamma}\mathbb{T}$ is AF and the
groupoid $\mathfrak{G}_{\mathcal{G}}$ is amenable.
\end{theorem}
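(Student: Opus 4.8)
The plan is to realize the crossed product $C^{*}(\mathcal{G})\rtimes_{\gamma}\mathbb{T}$ as the $C^{*}$-algebra of the skew product groupoid $\mathfrak{G}_{\mathcal{G}}\times_{c}\mathbb{Z}$ and then read off both conclusions from Corollary \ref{amena}. First I would transport the gauge action into the groupoid picture. The isomorphism $\psi\colon C^{*}(\mathcal{G})\to C^{*}(\mathfrak{G}_{\mathcal{G}})$ furnished by Theorem \ref{Isomorphism} was shown (in the computation immediately preceding that theorem) to satisfy $\psi\circ\gamma_{z}=\tau_{z}\circ\psi$ for every $z\in\mathbb{T}$, where $\tau$ is the action of $\mathbb{T}$ on $C^{*}(\mathfrak{G}_{\mathcal{G}})$ determined by the position cocycle $c$. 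Thus $\psi$ is an equivariant isomorphism of the two $\mathbb{T}$-dynamical systems, and by functoriality of the crossed product it induces an isomorphism
\[
C^{*}(\mathcal{G})\rtimes_{\gamma}\mathbb{T}\;\simeq\;C^{*}(\mathfrak{G}_{\mathcal{G}})\rtimes_{\tau}\mathbb{T}\text{.}
\]

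Next I would identify the right-hand side with a skew product groupoid algebra. Since $\tau$ is exactly the dual action on $C^{*}(\mathfrak{G}_{\mathcal{G}})$ attached to the continuous $\mathbb{Z}$-valued cocycle $c$, Renault's duality for skew products \cite{Rena} (the groupoid analogue of Takai duality) supplies a canonical isomorphism
\[
C^{*}(\mathfrak{G}_{\mathcal{G}})\rtimes_{\tau}\mathbb{T}\;\simeq\;C^{*}(\mathfrak{G}_{\mathcal{G}}\times_{c}\mathbb{Z})\text{.}
\]
By Corollary \ref{amena} the algebra $C^{*}(\mathfrak{G}_{\mathcal{G}}\times_{c}\mathbb{Z})$ is AF, so chaining the two displayed isomorphisms shows that $C^{*}(\mathcal{G})\rtimes_{\gamma}\mathbb{T}$ is AF, which is the first assertion. (It is worth noting that this route also recovers nuclearity of $C^{*}(\mathcal{G})$ itself: since an AF-algebra is nuclear and $\mathbb{Z}$ is amenable, Takai duality applied once more to $C^{*}(\mathcal{G})\rtimes_{\gamma}\mathbb{T}$ identifies $C^{*}(\mathcal{G})\otimes\mathcal{K}$ with a nuclear algebra.)

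For the amenability of $\mathfrak{G}_{\mathcal{G}}$, Corollary \ref{amena} already asserts that the skew product $\mathfrak{G}_{\mathcal{G}}\times_{c}\mathbb{Z}$ is an amenable groupoid. Because $\mathbb{Z}$ is a discrete amenable group and $c$ is a continuous cocycle, amenability of the skew product descends to the base groupoid by the standard skew-product characterization of amenability \cite{Rena}; hence $\mathfrak{G}_{\mathcal{G}}$ is amenable. I expect the only real work to lie in the two invocations of skew-product theory: first, confirming that $\tau$ is genuinely the dual action so that the crossed-product/skew-product identification applies, and second, justifying the descent of amenability from $\mathfrak{G}_{\mathcal{G}}\times_{c}\mathbb{Z}$ to $\mathfrak{G}_{\mathcal{G}}$. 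Both hinge on facts already in place, namely that $\mathfrak{G}_{\mathcal{G}}$ is an $r$-discrete Hausdorff groupoid (Theorem \ref{univ.groupoid}) carrying the continuous cocycle $c$ into the amenable group $\mathbb{Z}$, so each step reduces to a citation rather than a new computation.
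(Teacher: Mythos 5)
Your proposal is correct and follows essentially the same route as the paper's own proof: both use the equivariance $\psi\circ\gamma_{z}=\tau_{z}\circ\psi$ to identify $C^{\ast}\left(\mathcal{G}\right)\rtimes_{\gamma}\mathbb{T}$ with the crossed product of $C^{\ast}\left(\mathfrak{G}_{\mathcal{G}}\right)$ by the cocycle action, invoke Renault's Theorem 5.7 to pass to $C^{\ast}\left(\mathfrak{G}_{\mathcal{G}}\times_{c}\mathbb{Z}\right)$, and then apply Corollary \ref{amena} for the AF conclusion and for amenability of the skew product. Your final descent step is exactly the paper's citation of \cite[Proposition II.3.8]{Rena}, using amenability of $\mathbb{Z}$ to transfer amenability from $\mathfrak{G}_{\mathcal{G}}\times_{c}\mathbb{Z}$ down to $\mathfrak{G}_{\mathcal{G}}$.
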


\begin{proof}
Fix $z\in\mathbb{T}$ and let $\alpha_{z}\left(  f\right)  \left(  h\right)
=z^{c\left(  h\right)  }f\left(  h\right)  $, for $f\in C_{c}\left(  \mathfrak
{G}_{\mathcal{G}}\right)  $, $h\in\mathfrak{G}_{\mathcal{G}}$. Then
$\alpha_{z}\in\mathrm{Aut}C^{\ast}\left(  \mathfrak{G}_{\mathcal{G}}\right)  $
and $\{\alpha_{z}\}_{z\in\mathbb{T}}$ is a strongly continuous action of
$\mathbb{T}$ on $C^{\ast}(\mathfrak{G}_{\mathcal{G}})$ (see \cite[Proposition
5.1, p.110]{Rena}). So we can form the crossed product $C^{\ast}$-algebra
$C^{\ast}\left(  \mathfrak{G}_{\mathcal{G}}\right)  \rtimes_{\alpha}%
\mathbb{T}$, and by Theorem 5.7 in \cite[p.118]{Rena}, we have $C^{\ast
}\left(  \mathfrak
{G}_{\mathcal{G}}\right)  \rtimes_{\alpha}\mathbb{T}{\simeq}C^{\ast}\left(
\mathfrak{G}_{\mathcal{G}}\times_{c}\mathbb{Z}\right)  $. Recall the gauge
action $\gamma$ of $\mathbb{T}$ on $C^{\ast}\left(  \mathcal{G}\right)  $.
Since $C^{\ast}\left(  \mathcal{G}\right)  $ is defined to be the universal
$C^{\ast}$-algebra generated by the $s_{e}$, $p_{A}$'s subject to the
relations in Definition \ref{cuntz-pimsner rep.} and the gauge action on
$C^{\ast}\left(  \mathcal{G}\right)  $ preserves these relations (and so does
$\alpha$ via $s_{e}\longrightarrow1_{\mathcal{A}^{\prime}\left(  \left(
e,r\left(  e\right)  \right)  ,r\left(  e\right)  \right)  }$ and
$p_{A}\longrightarrow1_{\mathcal{A}^{\prime}\left(  A,A\right)  }$), we have
\begin{align*}
C^{\ast}\left(  \mathcal{G}\right)  \rtimes_{\gamma}\mathbb{T}  &  \simeq
C^{\ast}\left(  \mathfrak{G}_{\mathcal{G}}\right)  \rtimes_{\alpha}%
\mathbb{T}\\
&  \simeq C^{\ast}\left(  \mathfrak{G}_{\mathcal{G}}\times_{c}\mathbb{Z}%
\right) \\
&  \simeq C^{\ast}\left(  \mathfrak{G}_{\mathcal{G}\times_{1}\mathbb{Z}%
}\right)  \simeq C^{\ast}\left(  \mathcal{G}\times_{1}\mathbb{Z}\right)
\text{.}%
\end{align*}
Thus $C^{\ast}\left(  \mathcal{G}\right)  \rtimes_{\gamma}\mathbb{T}$ is an
AF-algebra. Corollary \ref{amena} implies that 
$\mathfrak{G}_{\mathcal{G}}\times_{c}\mathbb{Z}$ is amenable. Since $\mathbb{Z}$ is amenable we may apply \cite[Proposition
II.3.8]{Rena} to deduce that $\mathfrak{G}_{\mathcal{G}}$ is amenable.
\end{proof}

As we mentioned earlier, we can extend our result to general ultragraphs using desingularization.

\begin{theorem}
All ultragraph groupoids are amenable.
\end{theorem}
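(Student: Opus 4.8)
The plan is to reduce the general case to the one already settled --- ultragraphs with no singular vertices --- by passing to a desingularization. Let $\mathcal{G}$ be an arbitrary ultragraph; by our blanket assumption it has no sinks, but it may well have infinite emitters. If $\mathcal{G}$ has no singular vertices, the preceding theorem already gives that $\mathfrak{G}_{\mathcal{G}}$ is amenable, so I may assume $\mathcal{G}$ has at least one infinite emitter. I would then apply Tomforde's desingularization \cite[Prop.~6.2]{MTT} to obtain an ultragraph $\mathcal{F}$ with \emph{no} singular vertices together with a strong Morita equivalence $C^{\ast}(\mathcal{G})\sim C^{\ast}(\mathcal{F})$.

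Since $\mathcal{F}$ has no singular vertices, the preceding theorem shows that $\mathfrak{G}_{\mathcal{F}}$ is amenable, and in particular $C^{\ast}(\mathfrak{G}_{\mathcal{F}})\cong C^{\ast}(\mathcal{F})$ is nuclear. Nuclearity is preserved under strong Morita equivalence of (separable) $C^{\ast}$-algebras, so by Theorem \ref{Isomorphism} the algebra $C^{\ast}(\mathfrak{G}_{\mathcal{G}})\cong C^{\ast}(\mathcal{G})$ is nuclear as well. What remains is to transfer this back to amenability of the groupoid $\mathfrak{G}_{\mathcal{G}}$ itself.

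I would carry out this last step at the level of groupoids. The strong Morita equivalence produced by desingularization is, at bottom, an equivalence of the topological groupoids $\mathfrak{G}_{\mathcal{G}}$ and $\mathfrak{G}_{\mathcal{F}}$: concretely, I expect $\mathfrak{G}_{\mathcal{G}}$ to appear as a reduction $\mathfrak{G}_{\mathcal{F}}\vert_{X_{0}}$ of $\mathfrak{G}_{\mathcal{F}}$ to an open invariant subset $X_{0}$ of its unit space, obtained by tracking, under the correspondence between infinite paths of $\mathcal{G}$ and infinite paths of $\mathcal{F}$, how an edge leaving an infinite emitter of $\mathcal{G}$ is rerouted along the adjoined tail of $\mathcal{F}$. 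Both $\mathfrak{G}_{\mathcal{G}}$ and $\mathfrak{G}_{\mathcal{F}}$ are Hausdorff, second countable, $r$-discrete locally compact groupoids, and amenability of such groupoids is invariant under equivalence and passes to reductions to open subsets of the unit space (see \cite{Rena,APB}); hence the amenability of $\mathfrak{G}_{\mathcal{F}}$ would force that of $\mathfrak{G}_{\mathcal{G}}$. Together with the no-singular-vertex case, this proves that every ultragraph groupoid is amenable.

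The one genuinely substantive point, I expect, is the identification in the previous paragraph: making the desingularization explicit enough to exhibit $\mathfrak{G}_{\mathcal{G}}$ as a reduction of (or equivalence partner for) $\mathfrak{G}_{\mathcal{F}}$, where the bookkeeping of tails against infinite emitters is exactly the sort of complication flagged in Remark \ref{principal difficulty}. If instead one prefers to remain on the $C^{\ast}$-side and argue straight from the nuclearity of $C^{\ast}(\mathfrak{G}_{\mathcal{G}})$, then the delicate part shifts to justifying the passage from nuclearity of the groupoid $C^{\ast}$-algebra back to amenability of the groupoid, which for general \'etale groupoids is not automatic and would have to be argued using the specific structure of $\mathfrak{G}_{\mathcal{G}}$.
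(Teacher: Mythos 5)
This is essentially the paper's own proof: desingularize $\mathcal{G}$ to a singular-vertex-free $\mathcal{F}$ via \cite[Prop. 6.2]{MTT}, apply the preceding theorem to conclude $\mathfrak{G}_{\mathcal{F}}$ is amenable, and transfer back through the strong Morita equivalence $C^{\ast}\left(\mathcal{G}\right)\sim C^{\ast}\left(\mathcal{F}\right)$ of \cite[Theorem 6.6]{MTT} --- and you are in fact more careful than the paper, whose proof passes directly from Morita equivalence of the $C^{\ast}$-algebras to amenability of $\mathfrak{G}_{\mathcal{G}}$ without spelling out either the groupoid-equivalence argument or the (nontrivial) return from nuclearity of the $C^{\ast}$-algebra to amenability of the groupoid, both of which you correctly flag as the substantive points. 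One small correction to your groupoid-level picture: the subset $X_{0}$ exhibiting $\mathfrak{G}_{\mathcal{G}}$ as a reduction of $\mathfrak{G}_{\mathcal{F}}$ should be open and \emph{full} (meeting every orbit) but cannot be invariant --- an open invariant $X_{0}$ would make $C^{\ast}\left(\mathfrak{G}_{\mathcal{G}}\right)$ an ideal of $C^{\ast}\left(\mathfrak{G}_{\mathcal{F}}\right)$ rather than a full corner --- though this slip is harmless for the direction you need, since amenability passes to reductions whether or not the subset is invariant.
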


\begin{proof}
Let $\mathcal{G}=\left(  G^{0},\mathcal{G}^{1},r,s\right)  $ be an ultragraph,
and let $\mathcal{F}$ be a desingularization of $\mathcal{G}$. Then
$\mathcal{F}$ is an ultragraph with no singular vertices. Thus the groupoid
$\mathfrak{G}_{\mathcal{F}}$ is amenable. But then we have $C^{\ast}\left(
\mathfrak{G}_{\mathcal{G}}\right)  \simeq C^{\ast}\left(  \mathcal{G}\right)
$ and $C^{\ast}\left(  \mathcal{F}\right)  \simeq C^{\ast}(\mathfrak
{G}_{\mathcal{F}})$. By Theorem 6.6 in \cite{MTT} $C^{\ast}\left(
\mathcal{F}\right)  $ is strongly Morita equivalent to $C^{\ast}\left(
\mathcal{G}\right)  $. Thus $C^{\ast}\left(  \mathfrak{G}_{\mathcal{G}%
}\right)  $ is strongly Morita equivalent to $C^{\ast}(\mathfrak
{G}_{\mathcal{F}})$. Therefore $\mathfrak{G}_{\mathcal{G}}$ is amenable.
\end{proof}

\section{{\protect\small THE SIMPLICITY OF $C^{\ast}\left(  \mathcal{G}%
\right)  $}}

In this section we will use the groupoid $\mathfrak{G}_{\mathcal{G}}$ to
obtain conditions sufficient for $C^{\ast}\left(  \mathcal{G}\right)  $ to be
simple. The result obtained is effectively due to Mark Tomforde who also
proved the converse (see \cite[Theorem 3.11]{MTPS}). It seems likely that the
approach to the converse can be adapted to apply within the groupoid context
of the present paper.

For a vertex $v\in G^{0}$, \textit{a loop based at} $v$ is a finite path
$\alpha=e_{1}\ldots e_{n}$ in $\mathcal{G}$, such that $s\left(
\alpha\right)  =v$, $v\in r\left(  \alpha\right)  $ and $v\neq s\left(
e_{i}\right)  $ for all $1<i\leq n$. When $\alpha$ is a loop based at $v$, we
say that $v$ \textit{hosts the loop} $\alpha$. A loop based at $v$ may pass
through other vertices $w\neq v$ more than once but no edge other than $e_{1}$
may have source $v$. The ultragraph $\mathcal{G}$ is said to satisfy
\textit{condition (K)} if every $v\in G^{0}$ which hosts a loop hosts at least
two distinct loops. (See \cite[Defintion 7.1, p.17,18]{KMST}.)

Recall that a locally compact groupoid $G$ is \textit{essentially principal}
(\cite[p.100]{Rena}) if for all nonempty closed invariant subset $F$ of
$\ $its unit space, $G^{\left(  0\right)  }$ the set, $\{ x\in F:x$ has
trivial isotropy$\}$, is dense in $F$. We now show that for a general
ultragraph $\mathcal{G}$, the groupoid $\mathfrak{G}_{\mathcal{G}}$ is
essentially principal if and only if $\mathcal{G}$ satisfies condition (K).

\begin{theorem}
\label{essp} If $\mathcal{G}=\left(  G^{0},\mathcal{G}^{1},r,s\right)  $ is an
ultragraph, the $r$-discrete groupoid $\mathfrak{G}_{\mathcal{G}}$ is
essentially principal if and only if \ $\mathcal{G}$ satisfies condition (K).
\end{theorem}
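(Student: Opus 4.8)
The plan is to translate essential principality into a statement about eventual periodicity of paths and then to exploit a second loop supplied by condition (K). First I would pin down the isotropy groups in $\mathfrak{G}_{\mathcal{G}}$. Using the parametrization in Theorem \ref{univ.groupoid}, an element whose range and source both equal a unit $\chi$ is a triple $(x\cdot\mu,|x|-|y|,y\cdot\mu)$ with $x\cdot\mu=y\cdot\mu=\chi$ and $r(x)=r(y)$, and it is nontrivial precisely when $|x|\neq|y|$. For $\chi$ a finite ultrapath in $Y_\infty$ the equality $x\cdot\mu=y\cdot\mu=\chi$ forces $\mu$ to be finite and $|x|=|y|$, so every point of $Y_\infty$ has trivial isotropy; for $\chi=\gamma\in\mathfrak{p}^\infty$ it forces $\mu=\sigma^{|x|}\gamma=\sigma^{|y|}\gamma$, so nontrivial isotropy is equivalent to $\gamma$ being eventually periodic. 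Thus the units with trivial isotropy are exactly the points of $Y_\infty$ together with the aperiodic infinite paths, and the theorem reduces to: condition (K) holds iff in every nonempty closed invariant $F\subseteq X=Y_\infty\cup\mathfrak{p}^\infty$ the aperiodic units are dense.

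For the direction condition (K) $\Rightarrow$ essentially principal, I would fix a nonempty closed invariant set $F$, a unit $\chi\in F$, and a basic neighborhood of it; by Lemma \ref{topoinf} a neighborhood of an infinite path is controlled by an initial segment. If $\chi\in Y_\infty$ or $\chi$ is an aperiodic infinite path there is nothing to prove, so suppose $\chi=\beta\cdot\delta^\infty$ with $\delta$ the primitive loop based at a vertex $v$. Invariance gives $\delta^\infty\in F$. Condition (K) supplies a second loop $\delta'\neq\delta$ based at $v$, and I would build the aperiodic path $\gamma''=\delta^{a_1}\delta'\delta^{a_2}\delta'\cdots$ with strictly increasing gaps $a_i$, which is aperiodic because the spacing of the $\delta'$'s is not eventually periodic. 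Each truncation $\delta^{a_1}\delta'\cdots\delta^{a_m}\delta'\delta^\infty$ is tail-equivalent to $\delta^\infty$, hence lies in $F$, and these converge to $\gamma''$, so $\gamma''\in F$ since $F$ is closed; prepending $\beta$ keeps us orbit-equivalent, so $\beta\cdot\gamma''\in F$ as well. Choosing $a_1$ large makes $\beta\cdot\gamma''$ agree with $\chi$ on the prescribed initial segment, producing an aperiodic unit of $F$ in the given neighborhood.

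For the converse I argue the contrapositive: if condition (K) fails there is a vertex $v$ hosting exactly one loop $\delta$, and I would take $F=\overline{[\delta^\infty]}$, the closure of the orbit of $\delta^\infty$, which is automatically closed and invariant. The crux is to show that $\delta^\infty$ is an isolated point of $F$. Any orbit element approximating a path that agrees with $\delta^\infty$ along a long prefix and then deviates must, being tail-equivalent to $\delta^\infty$, eventually return to the $\delta$-periodic tail; the segment of such a path running from one passage through $v$ to the next passage through $v$ that contains the deviating edge is then a loop based at $v$ distinct from $\delta$, contradicting that $v$ hosts only one loop. Hence no point of $F$ other than $\delta^\infty$ lies in a small enough basic neighborhood $D_{(\delta^{k},r(\delta))}$, so $\delta^\infty$ is isolated in $F$ and has nontrivial isotropy; consequently the aperiodic units are not dense in $F$ and $\mathfrak{G}_{\mathcal{G}}$ is not essentially principal.

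The main obstacle is the isolation step in the converse: making precise, in the ultragraph setting where ranges are sets of vertices, that a deviation which rejoins the $\delta$-tail genuinely yields a second loop based at the same vertex $v$. I would handle this by tracking the occurrences of $v$ as a source along the path and invoking the definition of a loop (only the initial edge may have source $v$), so that the sub-path between consecutive visits to $v$ through the deviating edge is a bona fide loop at $v$. A secondary point requiring care is the verification, in both directions, that the paths I construct or pass to a limit of actually lie in the closed invariant set $F$; this is where closedness together with tail-equivalence to $\delta^\infty\in F$ does the work.
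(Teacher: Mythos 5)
Your proposal is correct in outline, and while your forward direction is essentially the paper's argument in a light disguise, your converse takes a genuinely different route. For (K) $\Rightarrow$ essentially principal, the paper splits on whether the infinite path passes through a loop-hosting vertex (rather than on aperiodicity) and approximates $\chi=x\cdot\beta\cdot\gamma'$ by appending the fixed aperiodic block pattern $\mu\nu\mu\mu\nu\nu\cdots\overset{n}{\overbrace{\mu\cdots\mu}}\overset{n}{\overbrace{\nu\cdots\nu}}\cdots$ built from two distinct loops at $v$, whereas you insert the second loop $\delta'$ into the periodic tail with strictly increasing gaps; both produce aperiodic limits of tail-equivalent paths lying in $F$ by invariance plus closedness. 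One small repair your version needs: the primitive period block of an eventually periodic path need not be a single loop based at $v$ in the paper's sense (it may decompose into a product of several loops at $v$, conceivably including $\delta'$ itself), so you should either run the insertion with the full period block and verify the insertions genuinely deviate --- which works because no edge of a loop at $v$ other than its first has source $v$ --- or simply append the paper's $\mu^{n}\nu^{n}$ pattern after a long prefix, which sidesteps the issue. For the converse, the paper does not use the orbit closure: it takes $F=\overline{C}$ with $C=\{\gamma:s(\gamma_i)\geq v\text{ for all }i\}$ and proves invariance of $F$ by an explicit, rather lengthy computation, then shows every $\mu\in F$ with $s(\mu)=v$ equals $\alpha^{\infty}$ because each initial segment of $\mu$ completes to a closed path at $v$, which by uniqueness of the loop must be a power of $\alpha$. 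Your $F=\overline{[\delta^{\infty}]}$ is leaner, since the closure of an invariant set is automatically invariant in an $r$-discrete groupoid (a fact worth citing explicitly rather than calling ``automatic''), and your isolation step is sound: an orbit element starting with $\delta$ that deviates at position $j$ yields, between the last $v$-visit at or before $j$ and the first one after $j$ (maximality of the former keeps the deviation within one period, and tail-equivalence guarantees the latter exists), a genuine loop at $v$ distinct from $\delta$; combined with $\overline{A}\cap D\subseteq\overline{A\cap D}$ for open $D$, this makes $\delta^{\infty}$ isolated in $F$. What your route buys is the elimination of the invariance verification; what the paper's buys is a concretely described invariant set, so membership checks replace your limit bookkeeping, and its stronger claim (all of $F$ with source $v$ collapses to $\alpha^{\infty}$) makes the failure of density transparent without the closure-versus-orbit distinction.
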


\begin{proof}
Suppose that $\mathcal{G}$ satisfies condition (K). Every ultrapath in
$\mathfrak{p}$ has trivial isotropy. So we just need to consider the infinite
paths. Let $F$ be a nonempty closed invariant subset of $\mathfrak{p}^{\infty
}$. We have to show that the set of points in $F$ with trivial isotropy is
dense in $F$. So fix any $\chi\in F$, and fix a basic open neighborhood
$D_{\left(  x,x\right)  }\cap F$ of $\chi$, ($x\in\mathfrak{p}$, $\left|
x\right|  \geq1$). Note that $\chi$ must have the form $x\cdot\gamma$ where
$\gamma\in\mathfrak{p}^{\infty}$ and $s\left(  \gamma\right)  \in r\left(
x\right)  $. (See Lemma \ref{topoinf}.) If every vertex through which $\gamma$
passes hosts no loop, then $\gamma$ must pass through each of them exactly
once. Thus the triple, $\left(  x\cdot\gamma,k,x\cdot\gamma\right)  $ can
belong to $\mathfrak{G}_{\mathcal{G}}$ only if $k=0$, and $\chi$ itself has
trivial isotropy. So we may assume that the infinite path $\gamma$ passes
through some vertex $v$ hosting a loop, say $\gamma=\beta\cdot\gamma^{\prime}%
$, $\gamma^{\prime}\in\mathfrak{p}^{\infty}$, with $s\left(  \gamma^{\prime
}\right)  =v$. Let $\mu$ and $\nu$ be distinct loops based at $v$, and define
paths $\gamma_{n}\in\mathfrak{p}^{\infty}$ by%
\[
\gamma_{n}:=x\cdot\beta\mu\nu\mu\mu\nu\nu\cdots\overset{n}{\overbrace
{\mu\cdots\mu}}\overset{n}{\overbrace{\nu\cdots\nu}}\cdot\gamma^{\prime
}\text{.}%
\]
Observe that each triple, $\left(  \gamma_{n},k_{n},x\cdot\beta\cdot
\gamma^{\prime}\right)  $, belongs to $\mathfrak{G}_{\mathcal{G}}$, (with a
substantial lag $k_{n}$), and since $F$ is invariant, each $\gamma_{n}$ lies
in $F$. The sequence $\gamma_{n}$ converges to the infinite path,
\[
x\cdot\beta\mu\nu\mu\mu\nu\nu\cdots\overset{n}{\overbrace{\mu\cdots\mu}%
}\overset{n}{\overbrace{\nu\cdots\nu}}\cdots\text{,}%
\]
which has trivial isotropy and belongs to $D_{\left(  x,x\right)  }\cap F$
because $F$ is closed. So we have approximated $\chi$ by a point with trivial
isotropy. Thus the groupoid $\mathfrak{G}_{\mathcal{G}}$ is essentially principal.

Suppose conversely that $\mathfrak{G}_{\mathcal{G}}$ is essentially principal
and suppose that $v\in G^{0}$ is a vertex hosting exactly one loop
$\alpha=e_{1}\ldots e_{n}$. Consider the set,
\[
C=\left\{  \gamma=\gamma_{1}\gamma_{2}\ldots\in\mathfrak{p}^{\infty}:s\left(
\gamma_{i}\right)  \geq v\text{ for all }i\geq1\right\}  \text{.}%
\]
Note that the infinite path $\gamma=\alpha\alpha\ldots$ belongs to $C$. Let
$F=\overline{C}$, the closure of $C$ in $X$. We show $F$ is invariant subset
of $X$. So take any $h=\left(  \chi,k,\chi^{\prime}\right)  \in\mathfrak
{G}_{\mathcal{G}}$ and suppose that $\chi\in F$. Then there is a sequence of
infinite paths, $\gamma_{n}\in C$ such that $\gamma_{n}\longrightarrow\chi$.
Either $|\chi|=\infty$ or $|\chi|<\infty$. Suppose first that $\left|
\chi\right|  =\infty$. Then for some $x,y\in$ $\mathfrak{p}$, and $\mu
\in\mathfrak{p}^{\infty}$, we have $\chi=x\cdot\mu$ and $\chi^{\prime}%
=y\cdot\mu$. Then eventually, every $\gamma_{n}=x\cdot\mu_{n}$, where $\mu
_{n}$ is a sequence in $\mathfrak{p}^{\infty}$ such that $s\left(  \mu
_{n}\right)  \in r\left(  x\right)  $, and so $y\cdot\mu_{n}\longrightarrow
y\cdot\mu$ eventually. Next we show that $y\cdot\mu_{n}\in C$ eventually. It
will follow that $\chi^{\prime}=y\cdot\mu\in F$. For this end, if $\left|
y\right|  =0$, then since $x\cdot\mu_{n}\in C$ eventually, $\mu_{n}$ must be
in $C$ eventually. But since $\left|  y\right|  =0$, the equation, $y\cdot
\mu_{n}=$ $\mu_{n}$, holds. Thus $y\cdot\mu_{n}\in C$ eventually. Otherwise
let $y=\left(  \beta,B\right)  $ for some finite path $\beta$ with positive
length. Set $\beta=\beta_{1}\ldots\beta_{\left|  \beta\right|  }$, and,
$\mu_{n}=\mu_{n1}\mu_{n2}\ldots$. Fix any $i\in\left\{  1,\ldots,\left|
\beta\right|  \right\}  $. Let $\eta$ be the finite path starting at $s\left(
\beta_{i}\right)  $ with range, $r\left(  \mu_{n_{1}}\right)  $, that is,
$\eta=\beta_{i}\beta_{i+1}\ldots\beta_{\left|  \beta\right|  }\mu_{n_{1}}$.
Since $x\cdot\mu_{n}\in C$ eventually, we may choose a finite path $\delta$
with $s\left(  \delta\right)  =s\left(  \mu_{n_{2}}\right)  $ and $v\in
r\left(  \delta\right)  $. So set $\theta=\eta\cdot\delta$, which belongs to
$\mathcal{G}^{\ast}$. Further, the equations, $s\left(  \theta\right)
=s\left(  \eta\right)  =s\left(  \beta_{i}\right)  $ and $v\in r\left(
\delta\right)  =r\left(  \theta\right)  $, hold. Hence $s\left(  \beta
_{i}\right)  \geq v$ for each $i\in\left\{  1,\ldots\left|  \beta\right|
\right\}  $, and since each $s\left(  \mu_{n_{j}}\right)  \geq v$, we may
conclude that $y\cdot\mu_{n}\in C$ eventually. So $y\cdot\mu\in F$ and
$\chi^{\prime}\in F$.

If $\left\vert \chi\right\vert <\infty$, then $r\left(  \chi\right)  =r\left(
\chi^{\prime}\right)  $, and a similar argument gives that $\chi^{\prime}\in
F$. We may use exactly the same argument to show that, if $\chi^{\prime}\in F$
then $\chi\in F$. Therefore $F$ is invariant.

We will contradict the assumption on the vertex $v$ by showing that if $\mu\in
F$ and $s\left(  \mu\right)  =v$, then $\mu=\gamma$. (For then, any sequence
in $F$ converging to $\gamma$ eventually will not have trivial isotropy.) We
can suppose that $\mu\in C$, since each $\mu^{\prime}\in F$ of finite length
and with $s\left(  \mu^{\prime}\right)  =v$ is the limit of sequence of such
an infinite path $\mu$.

Set $\mu=e_{1}^{\prime}e_{2}^{\prime}\ldots$ and for each $n$, let
$y=e_{1}^{\prime}\ldots e_{n}^{\prime}$. Since $\mu\in C$, $\ $we have
$s\left(  e_{n}^{\prime}\right)  \geq v$. So there is a finite path $\beta$
such that $s\left(  \beta\right)  =s\left(  e_{n}^{\prime}\right)  $ and $v\in
r\left(  \beta\right)  $. Note that $\alpha^{\prime}:=e_{1}^{\prime}\ldots
e_{n}^{\prime}\cdot\beta=y\cdot\beta$ is another loop based at $v$. Since
$\alpha$ is the only loop based at $v$, $\alpha^{\prime}$ is of the form
$\alpha\alpha\ldots\alpha$. It follows that every initial segment of $\mu$ is
an initial segment of $\gamma$. Therefore $\mu=\gamma$ and hence the
ultragraph $\mathcal{G}$ must satisfy condition (K).
\end{proof}

The following corollary is an immediate consequence of Theorem \ref{essp} and that fact that ultragraph groupoids are amenable.

\begin{theorem}\label{Ideals}
If $\mathcal{G}$ is an ultragraph satisfying condition (K), then the ideals in $C^*(\mathcal{G})$ are in bijective correspondence with the open invariant subsets of the unit space of $\mathfrak{G}_{\mathcal{G}}$.  In particular, $C^*(\mathcal{G})$ is simple if and only if $\mathfrak{G}_{\mathcal{G}}$ is minimal.
\end{theorem}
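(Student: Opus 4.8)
The plan is to move the entire question to the groupoid side and then appeal to the general ideal theory of $r$-discrete groupoids. By Theorem~\ref{Isomorphism} we have $C^\ast(\mathcal{G}) \simeq C^\ast(\mathfrak{G}_{\mathcal{G}})$, so it suffices to parametrise the closed two-sided ideals of $C^\ast(\mathfrak{G}_{\mathcal{G}})$. The groupoid $\mathfrak{G}_{\mathcal{G}}$ is locally compact, Hausdorff and $r$-discrete, with Haar system given by counting measures; since $G^0$ and $\mathcal{G}^1$ are countable it is also second countable. Because all ultragraph groupoids are amenable, $\mathfrak{G}_{\mathcal{G}}$ is amenable, and hence its full and reduced $C^\ast$-algebras coincide. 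Finally, since $\mathcal{G}$ satisfies condition (K), Theorem~\ref{essp} tells us that $\mathfrak{G}_{\mathcal{G}}$ is essentially principal. These are precisely the hypotheses under which Renault's machinery produces a clean description of the ideals.

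Next I would invoke the correspondence between open invariant subsets of the unit space and ideals. For an open invariant subset $U \subseteq \mathfrak{G}_{\mathcal{G}}^{(0)}$, extension by zero embeds $C_c(\mathfrak{G}_{\mathcal{G}}\vert_U)$ into $C_c(\mathfrak{G}_{\mathcal{G}})$, and the closed ideal $I_U$ it generates fits into the exact sequence $0 \to C^\ast(\mathfrak{G}_{\mathcal{G}}\vert_U) \to C^\ast(\mathfrak{G}_{\mathcal{G}}) \to C^\ast(\mathfrak{G}_{\mathcal{G}}\vert_F) \to 0$, where $F = \mathfrak{G}_{\mathcal{G}}^{(0)} \setminus U$ is the complementary closed invariant set (\cite[Proposition II.4.5]{Rena}). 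The assignment $U \mapsto I_U$ is order preserving and injective. Surjectivity --- that every ideal is of this form --- is exactly where essential principality enters: on an essentially principal groupoid the diagonal $C_0(\mathfrak{G}_{\mathcal{G}}^{(0)})$ detects ideals, so any ideal $I$ has $I \cap C_0(\mathfrak{G}_{\mathcal{G}}^{(0)}) = C_0(U)$ for an open invariant $U$, and then $I = I_U$ (\cite[Proposition II.4.6]{Rena}). The amenability step is what permits running this argument with the full algebra rather than only the reduced one. Combining, $U \mapsto I_U$ is a lattice isomorphism from the open invariant subsets of $\mathfrak{G}_{\mathcal{G}}^{(0)}$ onto the closed two-sided ideals of $C^\ast(\mathfrak{G}_{\mathcal{G}}) \simeq C^\ast(\mathcal{G})$, which is the first assertion.

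The statement about simplicity is then immediate from this order isomorphism. Recall that $\mathfrak{G}_{\mathcal{G}}$ is \emph{minimal} exactly when $\mathfrak{G}_{\mathcal{G}}^{(0)}$ contains no open invariant subsets other than $\emptyset$ and the whole space. Under the correspondence, $\emptyset$ maps to the zero ideal and the full unit space to $C^\ast(\mathcal{G})$, so $C^\ast(\mathcal{G})$ has no nontrivial proper ideals if and only if there are no nontrivial proper open invariant subsets, i.e.\ if and only if $\mathfrak{G}_{\mathcal{G}}$ is minimal. Since the theorem is billed as an immediate corollary, there is no genuinely new argument to supply; the only real care required is in confirming that $\mathfrak{G}_{\mathcal{G}}$ meets every hypothesis of the cited general theorem --- chiefly amenability (so that full and reduced agree) and essential principality (so that the correspondence is onto) --- and this is exactly what the amenability theorem and Theorem~\ref{essp} provide. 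The main (modest) obstacle is thus bookkeeping about second countability and the precise form of Renault's correspondence, rather than any new estimate.
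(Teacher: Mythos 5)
Your proposal is correct and takes essentially the same route as the paper's proof: pass to $C^{\ast}(\mathfrak{G}_{\mathcal{G}})$ via Theorem \ref{Isomorphism}, use amenability to identify the full and reduced groupoid $C^{\ast}$-algebras, use Theorem \ref{essp} to get essential principality from condition (K), and then cite Renault's ideal correspondence for essentially principal $r$-discrete groupoids. The additional detail you give (the exact sequence for an open invariant subset and the diagonal detecting ideals) is a faithful unpacking of the cited proposition of \cite{Rena}, not a different argument.
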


Recall that to say a groupoid is minimal is simply to say that the only invariant open subsets of its unit space are the empty set and the entire unit space.

\begin{proof}
Theorem \ref{essp} shows that $\mathfrak{G}_{\mathcal{G}}$ is essentially principal when $\mathcal{G}$ satisfies condition (K). On the other hand, Theorem \ref{amena} shows that all ultragraph groupoids are amenable and, therefore, that $C^*(\mathcal{G})\simeq C^*(\mathfrak{G}_{\mathcal{G}}) = C^*_{\rm{red}}(\mathfrak{G}_{\mathcal{G}})$.  Thus, the result follows from \cite[Proposition 2.4.6]{Rena}.
\end{proof}

We conclude with a groupoid approach to the sufficiency part of Theorem 3.11 in \cite{MTPS}, which gives necessary and sufficient conditions for an ultragraph $C^*$-algebra to be simple. First we
introduce the following definition.

\begin{definition}
Let $\mathcal{G}=\left(  G^{0},\mathcal{G}^{1},r,s\right)  $ be an ultragraph
and $v$ be a vertex. Let $A$ be a set in $\mathcal{G}^{0}$ and let $\alpha$ be
a finite path in $\mathcal{G}^{\ast}$. Then we write $v\longrightarrow
_{\alpha}A$, to mean that $s\left(  \alpha\right)  =v$ and $A\subseteq
r\left(  \alpha\right)  $. Roughly speaking the vertex $v$ reaches the set $A$
via one path $\alpha$. Compare with \cite[p.909]{MTPS}.
\end{definition}

\begin{theorem}
\label{simple} Let $\mathcal{G}=\left(  G^{0},\mathcal{G}^{1},r,s\right)  $ be
an ultragraph satisfying condition (K). Then the ultragraph $C^{\ast}$-algebra $C^{\ast}\left(
\mathcal{G}\right)  $ is simple if the following two conditions hold.

\begin{enumerate}

\item[$\mathit{(1)}$] $\mathcal{G}$ is cofinal (\cite{MTPS}) in the sense that
given a vertex $v$ and an infinite path $\gamma\in\mathfrak{p}^{\infty}$,
there exists an $n$ such that $v\geq s\left(  \gamma_{n}\right)  $; and

\item[$\mathit{(2)}$] if $A\in\mathcal{G}^{0}$ emits infinitely many edges in
$\mathcal{G}^{1}$, then for every $v\in G^{0}$ there exists a finite path
$\alpha\in\mathcal{G}^{\ast}$ such that $v\longrightarrow_{\alpha}A$.
\end{enumerate}
\end{theorem}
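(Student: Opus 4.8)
The plan is to invoke Theorem \ref{Ideals}: since $\mathcal{G}$ satisfies condition (K), $C^{\ast}(\mathcal{G})$ is simple precisely when $\mathfrak{G}_{\mathcal{G}}$ is minimal, i.e. when the only open invariant subsets of the unit space $X=Y_{\infty}\cup\mathfrak{p}^{\infty}$ are $\emptyset$ and $X$. I would establish minimality by proving the stronger assertion that every orbit in $X$ is dense; minimality then follows from the standard observation that if $U$ is a nonempty open invariant set and $u\in X\setminus U$, then $\overline{\mathrm{orbit}(u)}\subseteq X\setminus U$ (the complement being closed and invariant), so a single dense orbit forces $X\setminus U=\emptyset$. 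Recall that two points of $X$ lie in one orbit exactly when they share a common tail after deleting finite initial segments; for infinite paths this is the shift--tail equivalence $\sigma^{m}(\gamma)=\sigma^{l}(\gamma')$ appearing in Theorem \ref{skiso}.

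First I would treat an infinite path $\gamma\in\mathfrak{p}^{\infty}$. By Lemma \ref{topoinf} a basic neighborhood of an arbitrary target $\gamma'\in\mathfrak{p}^{\infty}$ has the form $D_{(x,x)}$ with $x=(\alpha,B)$ and $\alpha$ an initial segment of $\gamma'$. Choosing a vertex $v\in B\subseteq r(\alpha)$ and applying cofinality (hypothesis (1)) to $v$ and $\gamma$, I obtain an $n$ and a finite path $\delta$ with $s(\delta)=v$ and $s(\gamma_{n})\in r(\delta)$. Then $\gamma'':=\alpha\cdot\delta\cdot\sigma^{n-1}(\gamma)$ lies in $D_{(x,x)}$ and, sharing the tail $\sigma^{n-1}(\gamma)$ with $\gamma$, lies in $\mathrm{orbit}(\gamma)$. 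Hence $\mathrm{orbit}(\gamma)$ meets every basic neighborhood of every infinite path and so is dense in $\mathfrak{p}^{\infty}$; since $\overline{\mathfrak{p}^{\infty}}=X$ by Proposition \ref{closure}, its closure is all of $X$.

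The delicate case, and the main obstacle, is a point $y\in Y_{\infty}$: because the shared tail $\mu$ of a groupoid element has the same type at its source and range, the orbit of a finite ultrapath stays inside $Y_{\infty}$ and never meets an infinite path, so cofinality alone cannot make it dense. Here hypothesis (2) is exactly what is required. I would first note that $A:=r(y)$, viewed as a length-zero ultrapath, lies in $\mathrm{orbit}(y)$ (the triple $(A,-|y|,y)$ belongs to $\mathfrak{G}_{\mathcal{G}}$), so $\mathrm{orbit}(y)=\mathrm{orbit}(A)$, and a direct check gives $\mathrm{orbit}(A)=\{(\beta,A):\beta\text{ a finite path with }A\subseteq r(\beta)\}\cup\{A\}$, the finite ultrapaths of range $A$. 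Given any target $\gamma'\in\mathfrak{p}^{\infty}$ and a basic neighborhood $D_{(x,x)}$ with $x=(\alpha,B)$, choose $v\in B\subseteq r(\alpha)$; since $A$ is an infinite emitter, hypothesis (2) supplies a finite path $\eta$ with $s(\eta)=v$ and $A\subseteq r(\eta)$. Then $(\alpha\eta,A)\in\mathrm{orbit}(A)$ lies in $D_{(x,x)}$, so $\mathrm{orbit}(A)$ accumulates at every infinite path, is dense in $\mathfrak{p}^{\infty}$, and hence (again by Proposition \ref{closure}) dense in $X$.

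Combining the two cases, every orbit in $X$ is dense, so $\mathfrak{G}_{\mathcal{G}}$ is minimal and, by Theorem \ref{Ideals}, $C^{\ast}(\mathcal{G})$ is simple. Condition (K) enters only through Theorem \ref{Ideals} (by way of essential principality), cofinality (1) drives the infinite-path orbits, and (2) is the essential ingredient that rescues the finite $Y_{\infty}$ orbits; I expect the bulk of the genuine care to concentrate on the $Y_{\infty}$ points, where the identifications of the relevant orbits and the verification that the exhibited triples are legitimate elements of $\mathfrak{G}_{\mathcal{G}}$ must be carried out explicitly.
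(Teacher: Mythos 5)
Your proposal is correct and is essentially the paper's own argument: the paper also reduces, via condition (K), essential principality, and amenability (Theorem \ref{Ideals}), to showing $\mathfrak{G}_{\mathcal{G}}$ is minimal, and its core steps are the same as yours --- density of $\mathfrak{p}^{\infty}$ in the unit space (Proposition \ref{closure}), cofinality producing the path $\alpha\beta e_{i}e_{i+1}\cdots$ orbit-equivalent to a given infinite path, and hypothesis \textit{(2)} producing the ultrapath $(\alpha\alpha',r(y))$ orbit-equivalent to a given $y\in Y_{\infty}$. Your ``every orbit is dense'' packaging is just the contrapositive of the paper's direct verification that any nonempty open invariant set, which must contain a basic set $D_{((\alpha,A),(\alpha,A))}\cap\mathfrak{G}_{\mathcal{G}}^{(0)}$ around an infinite path, swallows all of $Y_{\infty}\cup\mathfrak{p}^{\infty}$.
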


\begin{proof}
Suppose that $\mathcal{G}$ satisfies condition (K) and the two conditions \textit{(1)} and  \textit{(2)}. By the preceding comments, we just need to show that
$\mathfrak{G}_{\mathcal{G}}$ is minimal. Let $U\neq\emptyset$ be an open
invariant subset of $\mathfrak{G}_{\mathcal{G}}^{\left(  0\right)  }%
=Y_{\infty}\cup\mathfrak{p}^{\infty}$. Since $\mathfrak{p}^{\infty}$ is dense
in $Y_{\infty}\cup\mathfrak{p}^{\infty}$, the inequality, $U\cap\mathfrak
{p}^{\infty}\neq\emptyset$, holds. By considering a neighborhood in
$\mathfrak{G}_{\mathcal{G}}^{\left(  0\right)  }$ of some $\gamma\in
U\cap\mathfrak{p}^{\infty}$ we have $D_{\left(  \left(  \alpha,A\right)
,\left(  \alpha,A\right)  \right)  }\cap\mathfrak{G}_{\mathcal{G}}^{\left(
0\right)  }\subset U$ for some $\alpha\in\mathcal{G}^{\ast}$ and some
$A\in\mathcal{G}^{0}$ with $A\subseteq r\left(  \alpha\right)  $. (See Lemma
\ref{topoinf}.) Pick a vertex $v\in A$. Take any $\gamma=e_{1}e_{2}\ldots
\in\mathfrak{p}^{\infty}$. Then by \textit{(1),} $v\geq s\left(  e_{i}\right)
$ for some $i\in\mathbb{N}$. Then there is a finite path $\beta\in
\mathcal{G}^{\ast}$ such that $s\left(  \beta\right)  =v$ and $s\left(
e_{i}\right)  \in r\left(  \beta\right)  $. Notice that the triple, $\left(
e_{1}\ldots e_{i}e_{i+1}\ldots,\left|  e_{1}\ldots e_{i}\right|  -\left|
\alpha\beta e_{i}\right|  ,\alpha\beta e_{i}e_{i+1}\ldots\right)  $, belongs
to $\mathfrak{G}_{\mathcal{G}}$. Since $\alpha\beta e_{i}e_{i+1}\ldots\in
D_{\left(  \left(  \alpha,A\right)  ,\left(  \alpha,A\right)  \right)  }%
\cap\mathfrak{G}_{\mathcal{G}}^{\left(  0\right)  }\subset U$ and since $U$ is
invariant, we must have $\gamma\in U$. Next take any $y\in Y_{\infty}$. Then
the range of $y$, $r\left(  y\right)  $, is an infinite emitter. Then by
\textit{(2)} $v\longrightarrow_{\alpha^{\prime}}r\left(  y\right)  $ for some
$\alpha^{\prime}\in\mathcal{G}^{\ast}$. Thus $v=s\left(  \alpha^{\prime
}\right)  $ and $r\left(  y\right)  \subseteq r\left(  \alpha^{\prime}\right)
$. Observe that, the ultrapath, $(\alpha\alpha^{\prime},r\left(  y\right)  )$,
lies in $Y_{\infty}$. It follows, then, that the triple, $\left(
(\alpha\alpha^{\prime},r\left(  y\right)  ),\left|  \alpha\alpha^{\prime
}\right|  -\left|  y\right|  ,y\right)  $, belongs to $\mathfrak
{G}_{\mathcal{G}}$. (See Notation \ref{Notation}.) Since $(\alpha
\alpha^{\prime},r\left(  y\right)  )\in D_{\left(  \left(  \alpha,A\right)
,\left(  \alpha,A\right)  \right)  }\cap\mathfrak
{G}_{\mathcal{G}}^{\left(  0\right)  }\subset U$ and since $U$ is invariant,
we must have $y\in U$. Thus, the inclusion, $\mathfrak{G}_{\mathcal{G}%
}^{\left(  0\right)  }=Y_{\infty}\cup\mathfrak{p}^{\infty}\subseteq U$, holds,
and hence $\mathfrak{G}_{\mathcal{G}}^{\left(  0\right)  }=U$. So $\mathfrak
{G}_{\mathcal{G}}$ is minimal.
\end{proof}

\end{document}